\definecolor{officegreen}{rgb}{0.0, 0.5, 0.0}
\definecolor{electricultramarine}{rgb}{0.25, 0.0, 1.0}
\newtheorem{thm}{Theorem}[section]
\newtheorem{lem}[thm]{Lemma}
\newtheorem{sublem}[thm]{Sublemma}
\newtheorem{prop}[thm]{Proposition}
\newtheorem{cor}[thm]{Corollary}
\newtheorem{remm}[thm]{Remark}
 \newtheorem{theorem}[thm]{Theorem}
\newtheorem{proposition}[thm]{Proposition}
\newtheorem{corollary}[thm]{Corollary}
\newtheorem{lemma}[thm]{Lemma}
\theoremstyle{definition}
\newtheorem{defn}[thm]{Definition}
\newtheorem{Notation}[thm]{Notation}
\newtheorem{rem}[thm]{Remark}
\newtheorem{rems}[thm]{Remarks}
\theoremstyle{remark}
\newtheorem*{Example}{Example}
\theoremstyle{plain}
\newtheorem{mainthm}{Theorem}
\numberwithin{equation}{section}
\newcommand{\C}{{\mathbb{C}}}
\newcommand{\R}{{\mathbb{R}}}
\newcommand{\Z}{{\mathbb{Z}}}
\newcommand{\bbS}{{\mathbb{S}}}
\newcommand{\calX}{{\mathcal{X}}}
\newcommand{\calC}{{\mathcal{C}}}
\newcommand{\calO}{{\mathcal{O}}}
\DeclareMathAlphabet{\euls}{U}{eus}{m}{n}
\newcommand{\g}{{\mathfrak{g}}}  
\newcommand{\fm}{{\mathfrak{m}}}
\newcommand{\fn}{{\mathfrak{n}}}
\newcommand{\fk}{{\mathfrak{k}}}  
\newcommand{\fsl}{{\mathfrak{sl}}}  
\newcommand{\fz}{{\mathfrak{z}}}
\newcommand{\fl}{{\mathfrak{l}}}
\newcommand{\fs}{{\mathfrak{s}}}
 \newcommand{\su}{{\mathfrak{su}}}
\newcommand{\ft}{{\mathfrak{t}}}
\newcommand{\Csf}{{\mathsf{C}}}
\newcommand{\Qsf}{{\mathsf{Q}}}
\def\preisomto{\vbox{\hbox to
               14pt{\hfill$\sim$\hfill}\nointerlineskip\vskip -0.2pt
               \hbox to 14pt{\rightarrowfill}}}
\def\isomto{\mathop{\preisomto}}
\def\prelongisomto{\vbox{\hbox to
                17pt{\hfill$\sim$\hfill}\nointerlineskip\vskip -0.2pt
                \hbox to 17pt{\rightarrowfill}}}
\newcommand{\stoo}{\longrightarrow \kern-15pt
\longrightarrow}
\newcommand{\sto}{\twoheadrightarrow}
\newcommand{\ito}{\hookrightarrow}
\newcommand{\mto}{\mapsto}
\newcommand{\imply}{\Rightarrow}
\def\Ker{\operatorname {Ker}}
\def\End{\operatorname {End}}
\def\SL{\operatorname {SL}}
\def\SO{\operatorname {SO}}
\def\grad{\operatorname {grad}}
\def\Lie{\operatorname {Lie}}
\def\Unit{\operatorname {U}}
\def\Unit{\operatorname {U}}
\def\SU{\operatorname {SU}}
\def\diag{\operatorname {diag}}
\def\SO{\operatorname {SO}}
\DeclareMathOperator{\trace}{{trace}}
\DeclareMathOperator{\Ad}{{Ad}}
\def\boplus{\mathbin{\boldsymbol{\oplus}}}
\DeclareMathOperator{\rough}{{\nabla^*\nabla}}
\newcommand{\rn}{\mathbb R}
\newcommand{\f}{\mathcal{F}}
\newcommand{\tJ}{\tilde{J}}
\newcommand{\hh}{\mathcal{H}}
\newcommand{\vv}{\mathcal{V}}
\newcommand{\id}{{\mathrm{id}}}
\newcommand{\ascal}[2]{{\langle {#1}  \mid {#2} \rangle}}
\newcommand{\vpi}{\varpi}
\newcommand{\vepsilon}{\varepsilon}
\newcommand{\vtheta}{\vartheta}
\newcommand{\sminus}{{\smallsetminus}}
\newcommand{\gC}{{\mathfrak{g}_\C}}
\newcommand{\GC}{{G}_\C}
\newcommand{\groot}[1]{\g^{#1}_\C}
\newcommand{\grootR}[1]{\g^{#1}_\R}
\newcommand{\tC}{{\mathfrak{t}_\C}}
\newcommand{\tR}{{\mathfrak{t}_\R}}
\newcommand{\roots}{\mathsf{R}}
\newcommand{\proots}{\mathsf{R}^+}
\newcommand{\broots}{\mathsf{B}}
\newcommand{\rootsP}{\mathsf{P}}
\newcommand{\rootsQ}{\mathsf{Q}}
\newcommand{\prootsQ}{\mathsf{Q}^+}
 \newcommand{\wT}{{\widetilde{T}}}
\newcommand{\Kt}{{\widetilde{K}}}
\newcommand{\wK}{{\widetilde{K}}}
\newcommand{\wk}{{\widetilde{\mathfrak{k}}}}
\newcommand{\Ut}{{\widetilde{\mathcal{U}}}}
\newcommand{\wt}{{\widetilde{\mathfrak{t}}}}
\newcommand{\wz}{{\widetilde{\mathfrak{z}}}}
\newcommand{\wm}{{\widetilde{\mathfrak{m}}}}
\newcommand{\wpi}{{\widetilde{\pi}}}
\newcommand{\wpzero}{{\tilde{p}_0}}
\newcommand{\pzero}{{p_0}}
\newcommand{\ftkl}{\mathfrak{t}_{k,\ell}}
\newcommand{\Tkl}{{T}_{k,\ell}}
\newcommand{\ltr}{\mathtt{L}}
\newcommand{\rtr}{\mathtt{R}}
\newcommand{\half}{{\frac{1}{2}}}
\newcommand{\halfsqrt}{{\frac{1}{\sqrt{2}}}}
\newcommand{\kahler}{K\"ahler\xspace}
\newcommand{\sissi}{if and only if\xspace}
\newcommand{\ACS}{almost contact structure\xspace}
\newcommand{\NACS}{normal almost contact structure\xspace}
\newcommand{\NACMS}{normal almost contact metric
  structure\xspace}
\newcommand{\ACMS}{almost contact metric
  structure\xspace}
\newcommand{\txe}{{(\theta,\xi,\eta)}}
\newcommand{\txeM}{{(\theta,\xi,\eta,g_M})}
\newcommand{\txeMK}{{(\theta,\xi,\eta,g_{\scriptscriptstyle{G/\wK}})}}
\newcommand{\txeMT}{{(\theta,\xi,\eta,g_{\scriptscriptstyle{G/\wT}})}}
\newcommand{\reg}{{\mathrm{reg}}}
\begin{document}


\title[Normal almost contact metric
structures]{Harmonicity of normal almost contact metric
  structures} 

\author{M. Benyounes}
\author{T. Levasseur}
\author{E. Loubeau}
\address{Universit\'e de  Brest, CNRS UMR 6205, LMBA, F-29238 Brest, France}
\email{Michele.Benyounes@univ-brest.fr}
\email{Thierry.Levasseur@univ-brest.fr}
\email{Eric.Loubeau@univ-brest.fr}
\author{E. Vergara-Diaz}

\keywords{Harmonic section; harmonic map; harmonic unit vector
  field; normal almost contact metric structure; Riemannian submersion,
  flag manifold}
\subjclass[2010]{Primary: 53C15; Secondary: 53C43, 53D15, 58E20}
\thanks{Research carried out under the EC Marie Curie Action no. 219258 and supported by a grant of the Romanian Ministry of Research and Innovation, CCCDI-UEFISCDI, project number PN-III-P3.1-PM-RO-FR-2019-0234/1BM/2019, within PNCDI III., and the PHC Brancusi 2019 project no 43460 TL}

\begin{abstract}
  We consider normal almost contact structures on a Riemannian
  manifold and, through their associated sections of an ad-hoc
  twistor bundle, study their harmonicity, as sections or as
  maps. We rewrite these harmonicity equations in terms of the 
  curvature tensor and find conditions relating the
  harmonicity of the almost contact metric and almost complex structures
  of the total and base spaces of the Morimoto fibration. We
  apply these results to homogeneous principal circle bundles over generalised
  flag manifolds, in particular Aloff-Wallach spaces, to mass-produce harmonic almost contact metric structures.
\end{abstract}

\maketitle

\section{Introduction} \label{sec1}

Though there can be a wealth of almost contact structures on an odd-dimensional manifold, few tools exist to select a better one within this multitude. 

In presence of a Riemannian metric, compatible almost contact
structures can be interpreted as reductions of the orthonormal
frame bundle and, as such, are in one-one correspondence with
sections of an ad-hoc twistor bundle, constructed as the quotient
of the orthonormal frame bundle by the group
$\Unit(n) \times 1 \subset {\mathrm{SO}}(2n+1)$, just like in the
better known case of almost Hermitian structures and the group
$\Unit(n) \subset {\mathrm{SO}}(2n)$. Fitting this homogeneous
fibre bundle with a Sasaki-like metric allows one to unfold the
programme of harmonic map theory: construct a functional,
characterise critical points, study the associated flow,
determine stable maps. The objective is to compare almost contact
structures among themselves, so critical points must be
understood with respectto vertical variations.

The first steps were achieved in \cite{VW1} and an almost contact metric structure $(\theta, \xi,\eta)$ on a Riemannian manifold $(M^{2n+1},g)$ is harmonic if (we refer to Section 2 for notations and precise definitions) \cite[Theorem 3.2]{VW1}:

\begin{equation}\label{hse1}\tag{HSE1}
[\bar{\nabla}^{*}\bar{\nabla} \bar{J} , \bar{J}]=0 ,
\end{equation}
and
\begin{equation}\label{hse2}\tag{HSE2}
\nabla^{*}\nabla \xi = |\nabla\xi|^2 \xi- (1/2) \bar{J}\circ \trace(\bar{\nabla}\bar{J}\otimes\nabla\xi),
\end{equation}
where $\bar{J}$ (resp. $\bar{\nabla}$) is the restriction of $\theta$ to the horizontal distribution $\xi^\perp$ (resp. the Levi-Civita connection $\nabla$)  and $\nabla^{*}\nabla = -\trace \nabla^{2}$.

A supplementary condition appears when further requiring the
section to be a critical point of the energy for all possible
variations, i.e.,  a harmonic map \cite[Theorem 3.4]{VW1}:
\begin{equation}\label{hme}\tag{HME}
\sum_{i=1}^{2n+1}\Bigl(g(R(E_{i}, X) ,  \bar{J} (\overline{\nabla}_{E_i} \bar{J})) +
4g(R(E_{i},X)\xi , \nabla_{E_i}\xi)\Bigr) =0, 
\end{equation}
for any tangent vector field $X$ and an orthonormal frame $\{E_{i}\}_{i=1,\dots,2n+1}$.

Here a comparison should be drawn with its even-dimensional
counterpart, as the harmonicity equation for almost Hermitian
structures can be read as the limiting case of a Cartesian
product with a circle and a parallel Reeb vector field. Only
\eqref{hse1} remains and harmonic almost Hermitian structures are
described by the commutation of $J$ and $\nabla^{*}\nabla J$
\cite{CMW2}. A similar remark applies for the harmonic map
condition~\eqref{hme}.

Having the harmonic section equations~\eqref{hse1} and
\eqref{hse2}, the next task is to determine classes of almost
contact metric structures  which
are harmonic or, failing that, conditions so they become
harmonic. The first part was done in \cite{VW1,VW2} and
\cite{LV}, and this article investigates the particular case of
normal almost contact structures, that is almost contact metric structures whose cone construction is a Hermitian manifold.

After rewriting, in Theorems~\ref{thm3.1} and \ref{thm3.2},
the harmonic section equations for normal almost contact
structures in terms of the curvature tensor, echoing some results
in \cite{CMW2}, we apply, in Section~\ref{sec4} these properties
to the Morimoto fibration to obtain conditions linking the
harmonicity of the normal almost contact structure
$(\theta,\xi,\eta)$ on the total space of this circle bundle and
that of the Hermitian structure on the base.  They involve the
rough Laplacian of the Reeb vector field $\xi$ and the divergence of
the tensor $\theta$, 
cf.~Theorems \ref{thm4.1} and~\ref{thm4.1-harm}.   As a
consequence, we  get that   if these  rough Laplacian and divergence are
constant multiples of $\xi$, then the harmonicity of
$(\theta,\xi,\eta)$ is equivalent to that of the complex
structure on the base, see Corollary~\ref{cor3.9}.

This relationship then leads in Section~\ref{sec7} to an
extensive study of harmonic normal almost contact structures on
homogeneous principal circle bundles
over a generalised flag manifold. Specifically, let $G/K$ be a
generalised flag manifold obtained from a compact connected Lie
group~$G$.
If $G/\wK \sto G/K$ is a homogeneous principal $\bbS^1$-bundle, to each
Hermitian metric on $G/K$ one can associate  a (canonical) normal
almost contact structure on $G/\wK$. Applying Corollary~\ref{cor3.9},
one concludes 
that the almost contact metric structure is harmonic, as
a section or map, if and only if so is the Hermitian structure
on~$G/K$, see Theorems~\ref{prop6.16} and
\ref{thm6.17}. The construction of this almost contact metric structure
relies on the classical characterisation of invariant Hermitian
metrics on $G/K$ in terms of root systems; for the convenience of
the reader, we recall in Section~\ref{sec6} the results that we
need on the subject. 

Sections~\ref{sec6} and~\ref{sec7} lead to the construction of harmonic \NACS on $\bbS^1$-bundles over a generalised flag manifold $G/K$, summarised by the following statement (cf. Section~\ref{sec6} for notations and definitions).


\begin{mainthm}
  \label{thmA}
  Let $G$ be a compact Lie group, $G/K$ be a generalised flag
  manifold and denote by $C$ the centre of $K$.  Let
  $\rootsQ^+ \subset \roots$ be an invariant ordering of a root
  system $\roots$ and let  $R_T^+$ be the set of restrictions of the
  elements of $\rootsQ^+$ to $\fz = \Lie(C)$, so that the reductive
  decomposition associated to $K$ is:
    \[
    \textstyle{\g= \fk \boplus \fm, \quad \fm = \boplus_{\gamma
        \in R_T^+} \fm_\gamma,}
    \]
    where $\g=\Lie(G)$, $\fk = \Lie(K)$ and the $\fm_\gamma$'s are
    nonequivalent irreducible $K$-modules.  Consider the
    $G$-invariant Hermitian structure $(J,g_{\scriptscriptstyle{G/K}})$ on $G/K$ defined
    by:

    -- the $\Ad(K)$-invariant complex structure $J_\fm$
   on $\fm$  given by 
   $J_\fm X_\lambda= Y_\lambda$, $J_\fm Y_\lambda = - X_\lambda$,
   for all $\lambda \in \prootsQ$ (cf.~\eqref{eq1.1} for the
   definitions of $X_\lambda, Y_\lambda$).

 --  the $G$-invariant metric $g_{\scriptscriptstyle{G/K}}$ given by  $g_{\fm_\gamma} =
 \kappa_\gamma B$, $\kappa_\gamma >0$, on $\fm_\gamma$, where $B$ is an invariant
 scalar product on $\g$.

 Choose a sub-torus $\widetilde{C} \subset
 \fz$ of codimension one, a unit vector (for $B$) $X_0 \in
 \fz$ 
 such that $\fz$ is the orthogonal direct sum  $\R
 X_0 \oplus \Lie(\widetilde{C})$, and
 let $\wK$ be the closed connected subgroup of
 $K$ such that $\wk= \Lie(\wK)$ is the orthogonal of
 $X_0$ in $\fk$. Then $K/\wK \cong \bbS^1$ and $\pi : G/\wK \sto
 G/K$ is a $G$-homogeneous principal $\bbS^1$-bundle. 
 Define:

  -- the $G$-invariant principal connection
 $\eta(U) = B(X_0,U)A $ for all $U \in \g$; 

   -- the endomorphism $\theta = d\pi^{-1}\circ J\circ d\pi$;

    -- the metric  
   $g_{\scriptscriptstyle{G/\wK}} = \pi^* g_{\scriptscriptstyle{G/K}} + \eta \otimes \eta$;

   -- the vector field
   $\xi_p = \frac{d}{dt}_{\mid t = 0}(pe^{tX_0})$ for all
   $p \in G/\wK$.
   \\
   If $J$ is a harmonic section then the \NACS $(\theta,\xi,\eta)$ on
   $(G/\wK, g_{\scriptscriptstyle{G/\wK}})$ is a harmonic
   section. Moreover, when the metric
   $g_{\scriptscriptstyle{G/K}}$  is K\"ahlerian, i.e.
   $\kappa_\gamma + \kappa_{\gamma'} = \kappa_{\gamma + \gamma'}$
   for $\gamma, \gamma' \in R_T^ +$ such that
   $\gamma + \gamma' \in R_T^+$, the \NACS
   $(\theta,\xi,\eta)$ is a harmonic map.
 \end{mainthm}

As explained in Remark~\ref{rem6.18}, one can choose the constants $\kappa_\gamma$'s such that the harmonic \NACS is not Sasakian.

The article closes with a description of harmonic (map) normal
almost contact metric structures on the simplest significant
examples, namely the Aloff-Wallach spaces, which are principal
homogeneous $\bbS^1$-bundles over the full flag variety
$G/K=\SU(3)/T$.  Thus, $G/\wK= \SU(3)/T_{k,\ell}$ where
$T_{k,\ell}$ is a one dimensional sub-torus of the maximal torus
$T$ depending of a couple of integers $(k,l)\neq (0,0)$. As
explained in the previous theorem, we show
that the almost contact metric structure on
$\SU(3)/T_{k,\ell}$ associated to a K\"ahler metric on $\SU(3)/T$
is a harmonic map, and we give formulas for the normal almost
contact structures in terms of the integers $(k,\ell)$. In order to state
an explicit particular result, recall that we can take $T$ to be the subgroup of
diagonal matrices in $\SU(3)$, so that the elements of $\ft = \Lie(T)$ can be
written $i\diag[a,b,-(a+b)]$ with $a,b \in \R$ and
$\Lie(T_{(k,\ell)})= i\R\diag[k,\ell,-(k+\ell)]
$. We choose the 
scalar product on $\su(3)$ to be $B(U,V)= - \trace(UV)$. From
Theorem~\ref{thmA}  one deduces  the following
result (see Theorem~\ref{thm4.1a}).

\begin{mainthm}
  \label{thmB}
  Write $\su(3) = \ft \boplus \fm$ for the reductive
  decomposition associated to $\SU(3)/T$. Choose an invariant
  complex structure $J$ on $\SU(3)/T$, i.e.~a set
  $\proots = \{\alpha, \beta, \gamma =\alpha+\beta\}$ of positive
  roots for the root system defined by the pair $(\ft,\su(3))$.
  Then $\fm= \fm_\alpha \oplus  \fm_\beta \oplus \fm_\gamma$ as a $T$-module; 
  define the invariant K\"ahler-Einstein metric
  $g$  on $\SU(3)/T$ by $g= B$ on
  $\fm_\alpha \oplus \fm_\beta$ and $g= 2B$ on $\fm_\gamma$.
  \\
  For $k,\ell \in \Z$ such that $(k,\ell) \neq (0,0)$, set
  $X_0 = \frac{-i}{\sqrt{6(k^2 + \ell^2 +k \ell)}} \diag[2\ell+
  k, -\ell - 2k, - \ell +k]$. If $\pzero \in \SU(3)/T_{k,\ell}$ is the class of the
  identity, define $\theta, \xi, \eta$ by their values on
  $T_{\pzero}(\SU(3)/T_{k,\ell})$ identified with
  $\fm \boplus \R X_0$:
     \begin{gather*}
       \theta(X_\lambda) = Y_\lambda, \  \theta(Y_\lambda)  = -
       X_\lambda \ \; \text{for $\lambda \in \proots$},
       \ \; \theta(X_0) = 0,
       \\
       \eta(X_0)= 1, \ \; \eta(U) = 0 \ \; \text{for $U \in
         \fm$},  \quad \xi_{\pzero} = \frac{d}{dt}_{\mid t
         = 0}(\pzero e^{tX_0})= X_0.
     \end{gather*}
Set $g_{k,\ell} = \pi^*
g + \eta \otimes \eta$, hence
$g_{k,\ell}  = B$ on $\fm_\alpha
\boplus \fm_\beta \boplus \R X_0$ and
$g_{k,\ell} = 2B$ on $\fm_\gamma$. \\
Then $(\theta,\xi,\eta,g_{k,\ell})$ is a normal almost contact
metric structure on $\SU(3)/T_{k,\ell}$ and is a harmonic
map. One can choose $J$ such that  this structure is not Sasakian when $k \ne \ell$.
\end{mainthm}


Note that we adopt the 
following sign for the curvature
tensor: $R(X,Y)= [\nabla_{X},\nabla_{Y}] - \nabla_{[X,Y]}$.

We thank the anonymous referee for useful comments and remarks in the earlier version of this paper.

\section{Normal almost contact metric structures} \label{sec2}

An {\em almost contact structure} on an odd-dimensional
differentiable manifold $M^{2n+1}$ is a reduction of the
structure group of its tangent frame  to $\Unit(n) \times 1$. More
concretely, this is equivalent to the existence of a field of
endomorphisms $\theta$ of the tangent space, a vector field $\xi$
and a one-form $\eta$ related by
$$ \eta (\xi) =1 , \quad \theta^2 = - \mathrm{Id} + \eta\otimes \xi .$$
Then, necessarily $\theta \xi =0$ and $\eta\circ \theta =0$.
A Riemannian metric $g$ on $M^{2n+1}$ is {\em compatible} if it satisfies
$$ g(\theta X, \theta Y) = g(X,Y) - \eta(X)\eta(Y) ,$$
for all vector fields $X$ and $Y$ tangent to $M^{2n+1}$, and such a
metric can always be constructed from the data
$(\theta ,\xi,\eta)$. We will only consider almost contact
structures compatible with the Riemannian metric. This approach
is initially due to Gray~\cite{Gray} and a good treatment can be
found in~\cite{Blair}.

One can also lift the almost contact structure to the Cartesian
line-product $\tilde{M} = M^{2n+1}\times \rn$ and construct an
almost complex structure $\tJ$ by 
$$
\tJ(X +f\partial_t)= \theta X -f \xi +\eta (X)\partial_t,
$$ 
for $X\in \calX(M)$ and $\partial_t$ the canonical unit vector field
tangent to $\rn$.  When $\tJ$ is integrable, the almost contact
structure will be called {\em normal}, which can be characterised
by the equation
$$
N_{\theta} + 2\textrm{d}\eta \otimes \xi =0,
$$
where $N_{\theta}$ is the Nijenhuis tensor
$$N_{\theta}(X,Y)= \theta^2 [X,Y] + [\theta X,\theta Y] - \theta
[\theta X,Y] - \theta [X,\theta Y].$$ 

An immediate effect of this condition is that the vector field
$\xi$, while not necessarily Killing, must have  geodesic integral
lines and preserve the field of endomorphisms $\theta$,
i.e.~$\nabla_{\xi}\theta =0$ (cf.~\cite{Blair}).

To prepare the computations of the next section, we derive an
alternative characterisation of normal almost contact metric
structures. Proposition~\ref{prop1} is quite probably a well-known result but, with no reference available, a proof is included for the sake of completeness.

\begin{proposition}\label{prop1}
  Let $(M^{2n+1},g)$ be a Riemannian manifold equipped with an
  almost contact metric structure $(\theta,\xi,\eta)$.  Then this
  structure is normal if and only if
\begin{align*}
(\nabla_X \theta) (Y)  &= (\nabla_{\theta X} \theta) (\theta Y)
                         - \eta(Y)\nabla_{\theta X} \xi, 
\end{align*} 
for any vector fields $X$ and $Y$ on $M^{2n+1}$.
\end{proposition}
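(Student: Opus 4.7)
I plan to reformulate both sides of the equivalence as identities on $\nabla\theta$ and bridge them via the almost complex structure $\tJ$ on the cone $\tilde M = M\times\rn$. Differentiating the relation $\theta^2 = -\mathrm{id} + \eta\otimes\xi$ yields the general almost contact metric identity
\[
\theta(\nabla_X\theta)Y + (\nabla_X\theta)(\theta Y) = g(Y,\nabla_X\xi)\,\xi + \eta(Y)\nabla_X\xi, \qquad (\ast)
\]
valid without any normality assumption. Using $(\ast)$ to eliminate $(\nabla_{\theta X}\theta)(\theta Y)$ in the stated equation shows that the latter is equivalent to
\[
(\nabla_X\theta)Y + \theta(\nabla_{\theta X}\theta)Y = g(Y,\nabla_{\theta X}\xi)\,\xi, \qquad (\mathrm E)
\]
so it suffices to prove that $(\mathrm E)$ holds if and only if $N_\theta + 2\,d\eta\otimes\xi = 0$.

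I would first verify $(\mathrm E)\Rightarrow$ normality by direct computation. Setting $Y=\xi$ in $(\mathrm E)$ forces $\nabla_{\theta X}\xi = \theta\nabla_X\xi$, while $X=\xi$ forces $\nabla_\xi\theta=0$; combining these two then gives $\nabla_\xi\xi = 0$. Substituting $X\mapsto \theta X$ in $(\mathrm E)$ and applying the auxiliary identities above yields the rewriting $\theta(\nabla_X\theta)Y = (\nabla_{\theta X}\theta)Y + g(Y,\nabla_X\xi)\,\xi$. Plugging this into the torsion-free expansion
\[
N_\theta(X,Y) = (\nabla_{\theta X}\theta)Y - (\nabla_{\theta Y}\theta)X + \theta(\nabla_Y\theta)X - \theta(\nabla_X\theta)Y,
\]
the four $\nabla\theta$ contributions cancel in pairs and one is left with $\bigl[g(X,\nabla_Y\xi) - g(Y,\nabla_X\xi)\bigr]\xi = -2\,d\eta(X,Y)\,\xi$, i.e.\ normality.

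For the converse I would pass to the cone: the product metric $\tilde g = g + dt^2$ on $\tilde M$ is compatible with $\tJ$ by a routine check, so $(\theta,\xi,\eta)$ is normal if and only if $(\tilde M, \tilde g, \tJ)$ is a Hermitian manifold. On any Hermitian manifold one has the identity
\[
(\tilde\nabla_{\tilde X}\tJ)\tilde Y + \tJ(\tilde\nabla_{\tJ\tilde X}\tJ)\tilde Y = 0, \qquad (\dagger)
\]
verified in local holomorphic coordinates in which Hermiticity forces the Christoffel symbols of $\tilde g$ to satisfy $\tilde\Gamma^{\bar c}_{ab}=0$, and conversely $(\dagger)$ directly implies $N_{\tJ}=0$. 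A computation in the product structure gives, for $X,Y\in TM$, $(\tilde\nabla_X\tJ)Y = (\nabla_X\theta)Y + (\nabla_X\eta)(Y)\,\partial_t$, with analogous expressions for vectors involving $\partial_t$. Substituting into $(\dagger)$ with $\tilde X = X,\tilde Y = Y \in TM$ and reading off the $TM$-component gives exactly $(\mathrm E)$; the remaining cases of $(\dagger)$ involving $\partial_t$ reproduce the auxiliary identities $\nabla_\xi\theta=0$, $\nabla_\xi\xi=0$, and $\nabla_{\theta X}\xi = \theta\nabla_X\xi$. The main obstacle is that $N_\theta + 2\,d\eta\otimes\xi$ is antisymmetric in $X,Y$ while $(\mathrm E)$ is not, so a purely computational descent from normality only recovers the symmetric part of $(\mathrm E)$; the cone argument bypasses this by invoking the extra rigidity supplied by the complex structure on $\tilde M$, encoded in $\tilde\Gamma^{\bar c}_{ab}=0$.
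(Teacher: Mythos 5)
Your proposal is correct, and although the decisive tool is the same as the paper's, the argument is organised quite differently. The paper works entirely on the cone: it quotes the Baird--Wood characterisation of integrability, $(\tilde{\nabla}_{\tJ \tilde X}\tJ)(\tJ\tilde Y)=(\tilde{\nabla}_{\tilde X}\tJ)(\tilde Y)$ --- which is exactly your $(\dagger)$ after using $\tilde{\nabla}\tJ\circ\tJ=-\tJ\circ\tilde{\nabla}\tJ$ --- evaluates it on general vectors $X+F\partial_t$, $Y+G\partial_t$, splits into components and coefficients of $F$ and $G$ to obtain the redundant system \eqref{eqa1}--\eqref{eqa6}, and then observes that the first identity (the stated equation) implies all the others. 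You instead (i) use the differentiated compatibility identity $(\ast)$ to replace the stated equation by the equivalent form $(\mathrm E)$, (ii) prove $(\mathrm E)\Rightarrow$ normality by a purely intrinsic Nijenhuis-tensor computation on $M$, with no cone at all, and (iii) invoke the cone and the Hermitian identity $(\dagger)$ only for the converse, reading off the $TM$-component for $X,Y\in TM$. I checked the key computations: $(\ast)$ and the equivalence with $(\mathrm E)$ are right, the four $\nabla\theta$ terms in $N_\theta$ do cancel after your rewriting, and the residue is $-2\,d\eta(X,Y)\,\xi$ in Blair's convention. Your step (ii) is more elementary than the paper's treatment of that direction, and your closing remark correctly identifies why it cannot simply be reversed ($N_\theta+2\,d\eta\otimes\xi$ only sees the antisymmetrisation of $(\mathrm E)$, so the extra rigidity of the integrable $\tJ$ is genuinely needed); the cost is that you must supply a proof of $(\dagger)$ (via Newlander--Nirenberg and holomorphic coordinates) where the paper simply cites \cite[Prop.~7.1.3]{Baird-Wood}. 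Both arguments are complete; yours makes the logical structure more transparent (which direction actually uses integrability), while the paper's single expansion is shorter on the page and delivers the auxiliary identities of Corollary~\ref{corollary1.1} as by-products.
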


\begin{proof} 
  Let $\tilde{\nabla}$ denote the covariant derivative of
  $\tilde{M}$ and $\nabla$ the covariant derivative of $M$, on
  the factor $\rn$ derivation will be noted by $D$ and
  $\partial_t$ will be the unit vector field. Given two vector fields
  $X$ and $Y$ on $M$ and functions $F$ and $G$ of $t$, we
  consider the vector fields $(X+F\partial_t)$ and $(Y+G\partial_t)$
  tangent to $\tilde{M}=M\times \mathbb{R}$ and re-write the
  integrability condition of $\tJ$
  (cf. \cite[Prop. 7.1.3]{Baird-Wood})
\begin{equation}\label{eqint}
(\tilde{\nabla}_{\tJ(X+F\partial_t)} \tJ) (\tJ(Y+G\partial_t)) =(\tilde{\nabla}_{(X+F\partial_t)} \tJ) (Y+G\partial_t) .
\end{equation}
For the sake of simplicity, we will evaluate all expressions at a
given point $x\in M$ and assume that, around this point, we
locally extended the vector $Y\in T_{x}M$ such that $\nabla
Y(x)=0$.\\ 
On the one hand
\begin{align*}
&\left(\tilde{\nabla}_{\tJ(X,F\partial_t)} \tJ \right) \left(\tJ(Y+G\partial_t)\right)  \\
&= -\tilde{\nabla}_{(\theta X- F\xi+\eta (X)\partial_t)}(Y+G\partial_t) - \tJ (\tilde{\nabla}_{(\theta X- F\xi+\eta (X)\partial_t)}
(\theta Y - G\xi + \eta (Y) \partial_t) )\\
 &= -\nabla_{\theta X}Y + F \nabla_{\xi}Y -\theta(\nabla_{\theta X}\theta Y)+ G \theta(\nabla_{\theta X}\xi)+ F \theta(\nabla_{\xi}\theta Y)-F G \theta(\nabla_{\xi}\xi)\\
&+\left((\theta X)(\eta(Y))-F \xi((\eta(Y))\right)\xi
+\left(F \eta(\nabla_{\xi}\theta Y)- \eta((\nabla_{\theta X}\theta Y) \right)\partial_t,
\end{align*}
while, on the other hand, by definition of $\tJ$,
\begin{align*}
(\tilde{\nabla}_{(X+F\partial_t)} \tJ) (Y+G\partial_t) &= (\nabla_X \theta) (Y) - G \nabla_{X}\xi +g( \nabla_X\xi, Y) \partial_t.
\end{align*}
Equating these two computations by \eqref{eqint} and using
different values for $F$ and $G$, yields, for the $M$-component, 
\begin{align} 
(\nabla_X \theta) (Y) &= (\nabla_{\theta X} \theta) (\theta Y) - \eta(Y)\nabla_{\theta X} \xi \label{eqa1}\\
0&= \nabla_{\xi}Y+\theta(\nabla_{\xi}\theta Y)-\xi(\eta(Y))\xi
\label{eqa2}\\
- \nabla_{X}\xi &= \theta (\nabla_{\theta X}\xi) \label{eqa3}\\
\theta(\nabla_{\xi}\xi)&=0 ,\label{eqa4}
\end{align} 
and for the $\rn$-component
\begin{align} 
0= g(\nabla_{\xi}\theta Y, \xi), \label{eqa5}\\
g(\nabla_X\xi, Y)=-g(\nabla_{\theta X}\theta Y, \xi). \label{eqa6}
\end{align}
Choosing the right type of vector fields, it is easily proved that
Equation~\eqref{eqa1} implies \eqref{eqa2}, \eqref{eqa3} and
\eqref{eqa4}, and also Equations~\eqref{eqa5} and \eqref{eqa6},
so the normality of an almost contact metric structure is merely
equivalent to \eqref{eqa1}.
\end{proof}

From the proof of Proposition~\ref{prop1}, we easily obtain a
series of equations which  we will repeatedly use. 
\begin{corollary}\label{corollary1.1}
  Let $(M^{2n+1},g)$ be a Riemannian manifold equipped with a
  normal almost contact metric structure $(\theta,\xi,\eta)$,
  then we have
\begin{align}
g(\nabla_X \xi, Y) +\eta(\nabla_{\theta X}\theta Y)&=0, \label{eq1}\\
g(\nabla_{\xi}\theta Y, \xi)&=0,\label{eq2}\\
\nabla_{\xi} Y + \theta(\nabla_{\xi}\theta Y)- \xi\bigl(\eta(Y)\bigr) \xi &=0,\label{eq3}\\
\nabla_X \xi +\theta(\nabla_{\theta X}\xi)&=0,\label{eq4}\\
\nabla_{\xi} \xi&=0 \label{eq5}, 
\end{align}
for any vector fields $X$ and $Y$ on $M^{2n+1}$.
\end{corollary}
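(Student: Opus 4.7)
The plan is to extract the five equations directly from the intermediate identities \eqref{eqa2}--\eqref{eqa6} obtained during the proof of Proposition~\ref{prop1}, after installing a few ambient facts about the compatible metric. Specifically, setting $Y=\xi$ in the compatibility condition and using $\theta\xi=0$ gives $\eta(X)=g(X,\xi)$ and $g(\xi,\xi)=1$; replacing $X$ by $\theta X$ in the same compatibility identity yields the skew-symmetry $g(\theta X,Y)=-g(X,\theta Y)$. Since $\xi$ is a unit vector field, we also obtain $g(\nabla_{X}\xi,\xi)=0$, i.e.\ $\eta(\nabla_{X}\xi)=0$, for every vector field $X$.

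With these preliminaries, equations \eqref{eq2}, \eqref{eq3}, \eqref{eq4} match respectively \eqref{eqa5}, \eqref{eqa2}, \eqref{eqa3} verbatim, so nothing remains to be proved for them once Proposition~\ref{prop1} is in hand. Equation \eqref{eq1} is a rewriting of \eqref{eqa6}: since $\eta(Z)=g(Z,\xi)$, the right-hand side $-g(\nabla_{\theta X}\theta Y,\xi)$ of \eqref{eqa6} is exactly $-\eta(\nabla_{\theta X}\theta Y)$, which moved to the left gives \eqref{eq1}.

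The only equation that needs an extra step is \eqref{eq5}. From \eqref{eqa4} one already knows $\theta(\nabla_{\xi}\xi)=0$, so $\nabla_{\xi}\xi$ lies in $\ker\theta=\R\xi$; writing $\nabla_{\xi}\xi=f\xi$ and taking the inner product with $\xi$, the fact that $\xi$ is unit gives $f=g(\nabla_{\xi}\xi,\xi)=\tfrac{1}{2}\xi(g(\xi,\xi))=0$, hence \eqref{eq5}.

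No real obstacle is expected: every equation in the corollary is either literally one of the intermediate identities (\eqref{eqa2}--\eqref{eqa6}) of the proof, or a one-line consequence obtained by identifying $\eta$ with $g(\cdot,\xi)$ and exploiting the unit-length of $\xi$. The only mildly delicate point is the last reduction, which requires combining the horizontal information from \eqref{eqa4} with the vertical information coming from $g(\xi,\xi)=1$ to upgrade $\theta(\nabla_{\xi}\xi)=0$ into $\nabla_{\xi}\xi=0$.
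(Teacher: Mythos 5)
Your proposal is correct and is essentially the paper's own argument: the paper simply asserts that the Corollary follows "from the proof of Proposition~\ref{prop1}", i.e.\ that \eqref{eq1}--\eqref{eq5} are read off from \eqref{eqa2}--\eqref{eqa6} via the identification $\eta(\cdot)=g(\cdot,\xi)$, exactly the bookkeeping you carry out. Your upgrade of $\theta(\nabla_\xi\xi)=0$ to $\nabla_\xi\xi=0$ using $\ker\theta=\R\xi$ and $g(\nabla_\xi\xi,\xi)=0$ is sound (and could be shortened further by putting $X=\xi$ in \eqref{eq4}, since $\theta\xi=0$).
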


A consequence of Proposition~\ref{prop1} is that, on the
complement to the $\xi$-direction, the field of endomorphisms
$\theta$ behaves like an integrable complex structure. This could be seen as an almost contact version of \cite[Lemma 2.1]{VW2}.

\begin{corollary}\label{corollary1}
  Let $(M^{2n+1},g)$ be a Riemannian manifold equipped with a
  normal almost contact metric structure
  $(\theta,\xi,\eta)$. Denote by $\f$ the contact sub-bundle,
  i.e.~the distribution orthogonal to $\xi$, by $\bar{J}$ the
  restriction of $\theta$ to $\f$ and by $\bar{\nabla}$ the
  connection induced on $\f$ by $\nabla$. Then
$$(\bar{\nabla}_{\bar{J}X}\bar{J})(\bar{J}Y) = (\bar{\nabla}_X \bar{J})(Y),$$
for any $X$ and $Y$ in $\f$.
\end{corollary}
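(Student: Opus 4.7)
The key observation is that for $Y\in \f$ one has $\eta(Y)=0$, so the identity in Proposition~\ref{prop1} reduces to the succinct relation
\begin{equation*}
(\nabla_X \theta)(Y) = (\nabla_{\theta X} \theta)(\theta Y),
\end{equation*}
valid for any $X\in \calX(M)$ and $Y\in \f$ (and in particular for $X,Y\in \f$). The remainder of the proof is pure book-keeping: I would show that both sides of the asserted equality are simply the $\f$-projections of the corresponding ambient quantities for $(\nabla,\theta)$.

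First I would record that, since $\bar\nabla$ is the orthogonal projection of $\nabla$ onto $\f$, one has $\bar\nabla_X V = \nabla_X V - g(\nabla_X V,\xi)\xi$ for any section $V$ of $\f$. Then I would expand
\begin{equation*}
(\bar\nabla_X \bar J)(Y) = \bar\nabla_X(\theta Y) - \bar J(\bar\nabla_X Y).
\end{equation*}
Because $\theta Y\in \f$, the first term on the right equals $\nabla_X(\theta Y)$ minus its $\xi$-component; because $\theta\xi=0$, the $\xi$-correction appearing in $\bar\nabla_X Y$ is killed by $\bar J=\theta|_\f$, so the second term collapses to $\theta(\nabla_X Y)$. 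Putting these together shows that $(\bar\nabla_X \bar J)(Y)$ is exactly the $\f$-projection of $(\nabla_X\theta)(Y)$.

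Running the same computation with $X$ replaced by $\bar J X=\theta X\in \f$ and $Y$ replaced by $\bar J Y=\theta Y\in \f$ (using $\theta^2 Y=-Y$ since $\eta(Y)=0$) proves that $(\bar\nabla_{\bar J X}\bar J)(\bar J Y)$ is the $\f$-projection of $(\nabla_{\theta X}\theta)(\theta Y)$. By the reduced form of Proposition~\ref{prop1} these two ambient expressions coincide, hence so do their $\f$-projections, which is the asserted equality. I do not anticipate a real obstacle: the only ingredients are $\theta\xi=0$, the definition of $\bar\nabla$ as an orthogonal projection, and the specialisation of Proposition~\ref{prop1} at $\eta(Y)=0$.
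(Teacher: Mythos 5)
Your proof is correct and follows exactly the route the paper intends: the paper states this corollary without proof as "a consequence of Proposition~\ref{prop1}", and your argument — specialising Proposition~\ref{prop1} to $\eta(Y)=0$ and checking that $(\bar{\nabla}_X\bar{J})(Y)$ and $(\bar{\nabla}_{\bar{J}X}\bar{J})(\bar{J}Y)$ are the $\f$-projections of $(\nabla_X\theta)(Y)$ and $(\nabla_{\theta X}\theta)(\theta Y)$ respectively — is precisely the omitted book-keeping. The only point worth making fully explicit is that $\theta$ takes values in $\f$ (from $g(\theta Z,\xi)=-g(Z,\theta\xi)=0$), so the term $\theta(\nabla_X Y)$ needs no further projection; with that, the argument is complete.
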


\section{Curvature equations of harmonicity} \label{sec3}

In the more favourable cases, the two equations characterising
the harmonicity of an almost contact metric structure can be expressed
in terms of the curvature tensor of $(M,g)$. This is what we
establish for normal almost contact metric structures in this section,
starting with a preliminary series of technical lemmas on a
Riemannian manifold $(M^{2n+1},g)$ equipped with a normal almost
contact metric structure $(\theta,\xi,\eta)$.

\begin{lemma}\label{lemma2}
  Let $E$ be a horizontal vector field, that is a section of the
  horizontal sub-bundle $\f$, such that, at some point
  $x\in M^{2n+1}$, $(\bar{\nabla}E)(x)=0$. Then, at $x$,
$$[E,\bar{J}E]= (\bar{\nabla}_E \bar{J}) (E) +2 g(E, \nabla_{\theta E} \xi) \xi .$$
\end{lemma}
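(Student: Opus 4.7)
The plan is to reduce everything at the point $x$ using the torsion-free identity $[E,\bar{J}E]=\nabla_E(\theta E)-\nabla_{\theta E}E$ and then to exploit the hypothesis $(\bar\nabla E)(x)=0$ to eliminate the horizontal parts of $\nabla E$.

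First I would note that since $g(E,\xi)=0$ (as $E$ is horizontal), differentiating gives $g(\nabla_X E,\xi)=-g(E,\nabla_X\xi)$ for every vector field $X$. Combined with the vanishing of $\bar\nabla E$ at $x$, this forces
\[
\nabla_X E = -g(E,\nabla_X\xi)\,\xi \quad \text{at } x,
\]
for every $X$. In particular $\nabla_{\theta E}E=-g(E,\nabla_{\theta E}\xi)\,\xi$, which already produces one copy of the expected $\xi$-term, and $\nabla_E E$ is proportional to $\xi$, so $\theta(\nabla_E E)=0$ at $x$ since $\theta\xi=0$. Consequently
\[
[E,\bar{J}E] = (\nabla_E\theta)(E) + g(E,\nabla_{\theta E}\xi)\,\xi \quad \text{at } x.
\]

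Next I would convert $(\nabla_E\theta)(E)$ into $(\bar{\nabla}_E\bar{J})(E)$. Because $\theta(\nabla_E E)=0$ at $x$, taking the horizontal projection of $\nabla_E(\theta E)=(\nabla_E\theta)(E)+\theta(\nabla_E E)$ shows that $(\bar\nabla_E\bar{J})(E)$ coincides with the horizontal component of $(\nabla_E\theta)(E)$. The key step is therefore to compute the $\xi$-component
\[
\eta\bigl((\nabla_E\theta)(E)\bigr) = -g(\theta E,\nabla_E\xi),
\]
obtained by expanding $g(\nabla_E(\theta E),\xi)$ and using $g(\theta E,\xi)=0$. Applying relation \eqref{eq4} from Corollary~\ref{corollary1.1} to rewrite $\nabla_E\xi=-\theta(\nabla_{\theta E}\xi)$, then using compatibility $g(\theta X,\theta Y)=g(X,Y)-\eta(X)\eta(Y)$ with $\eta(E)=0$, this $\xi$-component simplifies to $g(E,\nabla_{\theta E}\xi)$. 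Hence
\[
(\nabla_E\theta)(E) = (\bar{\nabla}_E\bar{J})(E) + g(E,\nabla_{\theta E}\xi)\,\xi \quad \text{at } x,
\]
and substituting into the expression for $[E,\bar{J}E]$ yields the two copies of the correction term and the claimed formula.

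The only delicate point is the $\xi$-component computation above; once equation~\eqref{eq4} is brought in, everything collapses to algebra, so I do not anticipate any genuine obstacle. The normality is used only through Corollary~\ref{corollary1.1}.
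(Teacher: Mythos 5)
Your proof is correct and follows essentially the same route as the paper: decompose $[E,\bar{J}E]$ into horizontal and vertical parts, identify the horizontal part with $(\bar{\nabla}_E\bar{J})(E)$ using the hypothesis $(\bar{\nabla}E)(x)=0$, and use normality to show the vertical part equals $2g(E,\nabla_{\theta E}\xi)\xi$. The only cosmetic difference is that you evaluate $\eta\bigl((\nabla_E\theta)(E)\bigr)$ via equation~\eqref{eq4} of Corollary~\ref{corollary1.1}, whereas the paper applies Proposition~\ref{prop1} directly to get $g((\nabla_E\theta)(E),\xi)=-g(\nabla_{\theta E}E,\xi)$; these are equivalent uses of the normality condition.
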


\begin{proof}
If $E$ is a horizontal vector field then so is $\bar{J}E$ and, evaluating at $x$,
\begin{align*} 
[E,\bar{J}E] & = \bar{\nabla}_{E}(\bar{J}E)+g(\nabla_E (\bar{J}E), \xi) \xi -\bar{\nabla}_{\bar{J}E}E -g(\nabla_{\bar{J}E} E, \xi) \xi \\
& = (\bar{\nabla}_E \bar{J}) (E)+g(\nabla_E{\theta E}-\nabla_{\theta E}E, \xi) \xi\\
&= (\bar{\nabla}_E \bar{J}) (E)+g((\nabla_E\theta)(E)-\nabla_{\theta E}E, \xi) \xi.
\end{align*}
By Proposition~\ref{prop1}, 
$$g((\nabla_E\theta)(E), \xi)
=g((\nabla_{\theta E}\theta)(\theta E), \xi)
=- g(\nabla_{\theta E}E, \xi).$$
Then,
\begin{align*}
[E,\bar{J}E] & =(\bar{\nabla}_E \bar{J}) (E)-2g((\nabla_{\theta E}E, \xi) \xi
=(\bar{\nabla}_E \bar{J}) (E)+2g(E, \nabla_{\theta E}\xi) \xi,
\end{align*}
since $\xi$ is vertical.
\end{proof}

To compute the first harmonic section equation, we need to
express the commutator of $\bar{J}$ and its Laplacian.

\begin{lemma}\label{lemma3.2}
  Let $E$ be a section of $\f$, such that, at some point
  $x\in M^{2n+1}$, $(\bar{\nabla}E)(x)=0$. Then, at $x$,
  $$
  [\bar{\nabla}_E \bar{\nabla}_E \bar{J},\bar{J}] = -2 [\bar{R}(E,\bar{J}E),\bar{J}
  ]-2\bar{\nabla}_{(\bar{\nabla}_E \bar{J})(E)}\bar{J}- [\bar{\nabla}_{\bar{J}E}
  \bar{\nabla}_{\bar{J}E} \bar{J},\bar{J}],
  $$
where $\bar{R}$ is the curvature tensor of the contact sub-bundle
$\f$ equipped with the connection $\bar{\nabla}$.
\end{lemma}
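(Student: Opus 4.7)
The plan is to exploit the integrability of $\bar{J}$ recorded in Corollary~\ref{corollary1}. Writing $A_X := \bar{\nabla}_X \bar{J}$ for brevity, the integrability relation $(\bar{\nabla}_{\bar{J}X}\bar{J})(\bar{J}Y) = (\bar{\nabla}_X \bar{J})(Y)$, combined with the anticommutation $A_X \bar{J} = -\bar{J} A_X$ (obtained by differentiating $\bar{J}^2 = -\mathrm{Id}$ on $\f$), gives the operator identity
$$A_{\bar{J}X} = \bar{J}\, A_X \quad\text{on } \f,$$
or equivalently $A_E = -\bar{J} A_{\bar{J}E}$.

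Differentiating this last relation along $E$ at the point $x$ yields
$$\bar{\nabla}_E A_E = -A_E A_{\bar{J}E} - \bar{J}\, \bar{\nabla}_E A_{\bar{J}E}.$$
The key subtlety is that, although $\bar{\nabla}E|_x=0$, the vector field $\bar{J}E$ satisfies $\bar{\nabla}_V(\bar{J}E)|_x = A_V E$, which does not vanish. Unfolding the second covariant derivative therefore gives $\bar{\nabla}_E A_{\bar{J}E}|_x = (\bar{\nabla}^2 \bar{J})(E, \bar{J}E) + A_{A_E E}$. I would then use the Ricci identity $(\bar{\nabla}^2\bar{J})(E,\bar{J}E) - (\bar{\nabla}^2\bar{J})(\bar{J}E, E) = [\bar{R}(E, \bar{J}E), \bar{J}]$ to swap the order of differentiation, note that $(\bar{\nabla}^2\bar{J})(\bar{J}E, E)|_x = \bar{\nabla}_{\bar{J}E} A_E|_x$, and apply the same integrability trick a second time by differentiating $A_E = -\bar{J} A_{\bar{J}E}$ along $\bar{J}E$ to get $\bar{\nabla}_{\bar{J}E} A_E = -A_{\bar{J}E}^2 - \bar{J}\, \bar{\nabla}_{\bar{J}E}\bar{\nabla}_{\bar{J}E}\bar{J}$ at~$x$.

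Assembling these pieces and collapsing the quadratic $A$-terms via $A_{\bar{J}E}^2 = A_E^2$ and $A_E A_{\bar{J}E} = -\bar{J} A_E^2$ (both direct consequences of $A_{\bar{J}E} = \bar{J} A_E$ together with the anticommutation), the computation reduces to
$$\bar{\nabla}_E \bar{\nabla}_E \bar{J}\big|_x = 2\bar{J} A_E^2 - \bar{\nabla}_{\bar{J}E}\bar{\nabla}_{\bar{J}E}\bar{J} - \bar{J}\bigl[\bar{R}(E, \bar{J}E), \bar{J}\bigr] - \bar{J}\, A_{A_E E}.$$
Taking the commutator with $\bar{J}$ finishes the proof via three elementary facts about endomorphisms of $\f$: (i) $A_E^2$ commutes with $\bar{J}$, so $[\bar{J}A_E^2,\bar{J}]=0$; (ii) any commutator $[M,\bar{J}]$ automatically anticommutes with $\bar{J}$; and (iii) $[\bar{J}M,\bar{J}] = 2M$ whenever $M$ anticommutes with $\bar{J}$. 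Applied to $M = A_{A_E E}$ and to $M = [\bar{R}(E,\bar{J}E),\bar{J}]$, these produce exactly the stated identity.

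The main obstacle is the bookkeeping around the nonvanishing term $\bar{\nabla}_E(\bar{J}E)|_x = A_E E$: this is precisely what generates the inhomogeneous contribution $-2\bar{\nabla}_{(\bar{\nabla}_E \bar{J})(E)}\bar{J}$, and keeping the signs consistent through the two passes of the integrability substitution and the final commutator expansion is where a careless computation could easily slip.
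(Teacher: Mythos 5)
Your argument is correct; I checked each step (the operator identity $A_{\bar J X}=\bar J A_X$ from Corollary~\ref{corollary1} plus anticommutation, the two differentiations with the correction term $A_{A_EE}=\bar\nabla_{(\bar\nabla_E\bar J)(E)}\bar J$ coming from $\bar\nabla_E(\bar JE)|_x=(\bar\nabla_E\bar J)(E)$, the Ricci identity producing $[\bar R(E,\bar JE),\bar J]$, the simplifications $A_EA_{\bar JE}=-\bar JA_E^2$ and $A_{\bar JE}^2=A_E^2$, and the three commutator facts), and your intermediate formula $\bar\nabla_E\bar\nabla_E\bar J=2\bar JA_E^2-\bar\nabla_{\bar JE}\bar\nabla_{\bar JE}\bar J-\bar J[\bar R(E,\bar JE),\bar J]-\bar JA_{A_EE}$ agrees with the paper's Equation~\eqref{eqstar} after right-composition with~$\bar J$. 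Note only that your reading of $\bar\nabla_{\bar JE}\bar\nabla_{\bar JE}\bar J$ as the iterated derivative $\bar\nabla_{\bar JE}(\bar\nabla_{\bar JE}\bar J)$, rather than the tensorial second derivative, is the one the paper intends (this is how the term is consumed in the proof of Theorem~\ref{thm3.1}), and your computation is consistent with that convention.

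Your route differs from the paper's in two useful ways. First, the paper extracts the curvature by commuting the iterated directional derivatives $\bar\nabla_E\bar\nabla_{\bar JE}$ and $\bar\nabla_{\bar JE}\bar\nabla_E$, which forces it to compute the Lie bracket $[E,\bar JE]$ via Lemma~\ref{lemma2} and then to discard the vertical component of that bracket using $\bar\nabla_\xi\bar J=0$; your use of the tensorial Ricci identity for $(\bar\nabla^2\bar J)(E,\bar JE)-(\bar\nabla^2\bar J)(\bar JE,E)$ sidesteps Lemma~\ref{lemma2} and the vertical bookkeeping entirely, since the induced connection on $\f$ never leaves $\f$. Second, the paper forms the commutator $[\bar\nabla_E\bar\nabla_E\bar J,\bar J]$ by running the whole computation a second time on $\bar J X$ in place of $X$, whereas you dispatch it with three one-line algebraic identities about endomorphisms (anti)commuting with $\bar J$. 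The paper's version has the advantage of producing Equation~\eqref{eqstar} as an explicit intermediate identity on vectors, which is reused downstream; yours is shorter and makes the algebraic structure of the cancellations more transparent.
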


\begin{proof}
  Let $E$ be a section of $\f$ and $X$ a horizontal vector 
  extended locally such that $\bar{\nabla} X =0$ at the point
  $x\in M$ where we evaluate all expressions.  By the Leibniz
  rule, we have
  $$
  (\bar{\nabla}_E\bar{\nabla}_E \bar{J})(\bar{J}X)=\bar{\nabla}_E
  \big((\bar{\nabla}_E \bar{J})
  (\bar{J}X)\big)-(\bar{\nabla}_{E}\bar{J})((\bar{\nabla}_E \bar{J}) (X)).
  $$
  Using Corollary~\ref{corollary1} and Lemma~\ref{lemma2}, the
  first term on the right-hand side may be expressed in terms of
  the curvature tensor $\bar{R}$ as follows
\begin{align*}
&\bar{\nabla}_E\big((\bar{\nabla}_E \bar{J}) (\bar{J}X)\big)  = -\bar{\nabla}_E\big((\bar{\nabla}_{\bar{J}E}\bar{J}) (X)\big) 
 =-\bar{\nabla}_E\bar{\nabla}_{\bar{J}E}(\bar{J}X)+\bar{\nabla}_E(\bar{J}\bar{\nabla}_{\bar{J}E}
  X) \\
  &= -\bar{\nabla}_E\bar{\nabla}_{\bar{J}E}(\bar{J}X)+ \bar{J}(\bar{\nabla}_{E}\bar{\nabla}_{\bar{J}E}X)\\ 
& =-\bar{\nabla}_{\bar{J}E}\bar{\nabla}_E(\bar{J}X)+\bar{J}\bar{\nabla}_{\bar{J}E}\bar{\nabla}_E X -\bar{\nabla}_{[E,\bar{J}E]}(\bar{J}X)+ \bar{J}\bar{\nabla}_{[E,\bar{J}E]}X-[\bar{R}(E,\bar{J}E),\bar{J}](X)\\
&= -[\bar{R}(E,\bar{J}E),\bar{J}](X)  -\bar{\nabla}_{\bar{J}E}\bar{\nabla}_E(\bar{J}X)+\bar{J}\bar{\nabla}_{\bar{J}E}\bar{\nabla}_E  X - \bar{\nabla}_{(\bar{\nabla}_E \bar{J})(E)}(\bar{J}X)
  - 2g(  E, \nabla_{\bar{J} E}\xi) \bar{\nabla}_{\xi} (\bar{J}X)
  \\ & \phantom{xxx}+\bar{J}(\bar{\nabla}_{(\bar{\nabla}_E \bar{J})(E)}X)
+2g(  E, \nabla_{\bar{J} E}\xi) \bar{J}(\bar{\nabla}_{\xi}X)\\
  & =  -[\bar{R}(E,\bar{J}E),\bar{J}](X)  -\bar{\nabla}_{\bar{J}E}\bar{\nabla}_E(\bar{J}X)+\bar{J}\bar{\nabla}_{\bar{J}E}\bar{\nabla}_E X - \big(\bar{\nabla}_{(\bar{\nabla}_E \bar{J})(E)}\bar{J}\big)(X),
\end{align*}
since $\bar{\nabla}_{\xi} \bar{J} =0$. Now 
\begin{align*}
-\bar{\nabla}_{\bar{J}E}\bar{\nabla}_E(\bar{J}X) +\bar{J} (\bar{\nabla}_{\bar{J}E}\bar{\nabla}_E X)
&= -\bar{\nabla}_{\bar{J}E}((\bar{\nabla}_E \bar{J})(X)) -\bar{\nabla}_{\bar{J}E}(\bar{J}(\bar{\nabla}_E X))+\bar{J} (\bar{\nabla}_{\bar{J}E}\bar{\nabla}_E X)\\
&= -\bar{\nabla}_{\bar{J}E}((\bar{\nabla}_E \bar{J})(X))
= - \bar{\nabla}_{\bar{J}E}((\bar{\nabla}_{\bar{J}E}\bar{J})(\bar{J}X))\\
&= - (\bar{\nabla}_{\bar{J}E}\bar{\nabla}_{\bar{J}E}\bar{J})(\bar{J}X)- (\bar{\nabla}_{\bar{J}E}\bar{J}) (\bar{\nabla}_{\bar{J}E}(\bar{J}X))\\
&= - (\bar{\nabla}_{\bar{J}E}\bar{\nabla}_{\bar{J}E}\bar{J})(\bar{J}X)- (\bar{\nabla}_{\bar{J}E}\bar{J})((\bar{\nabla}_{\bar{J}E}\bar{J})(X)),
\end{align*}
because of the way we choose to extend the vector $X$.
Therefore
\begin{align}  
\bar{\nabla}_E \big((\bar{\nabla}_E \bar{J}) (\bar{J}X)\big) &=-[\bar{R}(E,\bar{J}E),\bar{J}](X)-\big(\bar{\nabla}_{(\bar{\nabla}_E\bar{J})(E)}\bar{J}\big)(X)\label{clubsuit}\\
&-(\bar{\nabla}_{\bar{J}E}\bar{\nabla}_{\bar{J}E}\bar{J}) (\bar{J}X) - (\bar{\nabla}_{\bar{J}E}\bar{J})\circ(\bar{\nabla}_{\bar{J}E}\bar{J})(X)\notag.
\end{align}
By definition of the covariant derivative of $\bar{\nabla}_E \bar{J}$
$$
\bar{\nabla}_E ((\bar{\nabla}_E \bar{J}) (\bar{J}X))= (\bar{\nabla}_E
\bar{\nabla}_E \bar{J})(\bar{J}X)+(\bar{\nabla}_{E}\bar{J})(
(\bar{\nabla}_{E}\bar{J})(X)),
$$
hence, using Equation~\eqref{clubsuit}, we have
\begin{align}
(\bar{\nabla}_E \bar{\nabla}_E \bar{J})(\bar{J}X) 
&=  -(\bar{\nabla}_{E}\bar{J})((\bar{\nabla}_{E}\bar{J})(X))+\bar{\nabla}_E ((\bar{\nabla}_E \bar{J}) (\bar{J}X))\notag\\
&=  -(\bar{\nabla}_{E}\bar{J})\circ(\bar{\nabla}_{E}\bar{J})(X)-[\bar{R}(E,\bar{J}E),\bar{J}](X)-(\bar{\nabla}_{(\bar{\nabla}_E\bar{J})(E)}\bar{J})(X)\notag\\
&-(\bar{\nabla}_{\bar{J}E}\bar{\nabla}_{\bar{J}E}\bar{J}) (\bar{J}X)  - (\bar{\nabla}_{\bar{J}E}\bar{J})\circ(\bar{\nabla}_{\bar{J}E}\bar{J})(X)  \notag \\
&= -2(\bar{\nabla}_{E}\bar{J})\circ(\bar{\nabla}_{E}\bar{J})(X)-[\bar{R}(E,\bar{J}E),\bar{J}](X)-(\bar{\nabla}_{(\bar{\nabla}_E\bar{J})(E)}\bar{J})(X)\label{eqstar}\\
&-(\bar{\nabla}_{\bar{J}E}\bar{\nabla}_{\bar{J}E}\bar{J}) (\bar{J}X) ,\notag
\end{align}
since, by Corollary~\ref{corollary1},
$$(\bar{\nabla}_{\bar{J}E}\bar{J})\circ(\bar{\nabla}_{\bar{J}E}\bar{J})(X) =
(\bar{\nabla}_{E}\bar{J})\circ(\bar{\nabla}_{E}\bar{J})(X).$$ 
To compute the second term of $[\bar{\nabla}_E\bar{\nabla}_E \bar{J} ,
\bar{J}] (X)$ we proceed as follows: 
\begin{align*} 
(\bar{\nabla}_E \bar{\nabla}_E \bar{J})(\bar{J}^2X)&=-2\bar{J}\circ(\bar{\nabla}_{E}\bar{J})\circ(\bar{\nabla}_{E}\bar{J})(X)-[\bar{R}(E,\bar{J}E),\bar{J}](\bar{J}X)\\
&+\bar{J}(\bar{\nabla}_{(\bar{\nabla}_E\bar{J})(E)}\bar{J})(X)+(\bar{\nabla}_{\bar{J}E}\bar{\nabla}_{\bar{J}E}\bar{J}) (X),
\end{align*}
then
\begin{align*} 
-\bar{J}(\bar{\nabla}_E \bar{\nabla}_E \bar{J})(X)&=2(\bar{\nabla}_{E}\bar{J})\circ(\bar{\nabla}_{E}\bar{J})(X)-[\bar{R}(E,\bar{J}E),\bar{J}](X)\\
&-(\bar{\nabla}_{(\bar{\nabla}_E\bar{J})(E)}\bar{J})(X)+\bar{J}(\bar{\nabla}_{\bar{J}E}\bar{\nabla}_{\bar{J}E}\bar{J}) (X).
\end{align*}
Summing up this last equation with~\eqref{eqstar}, we obtain the result.
\end{proof}

\begin{lemma}\label{lemma3.3}
  Let $\{F_i\}_{i=1,\dots, 2n}$ be an orthonormal frame on the
  horizontal sub-bundle $\f$, such that, at the point
  $x\in M^{2n+1}$ where we evaluate all our expressions,
  $(\bar{\nabla} F_i)(x) =0$.  Define the Lee vector field by
  $\bar{\delta}\bar{J}=\sum_{i=1}^{2n}(\bar{\nabla}_{F_i}
  \bar{J})(F_i)$. Then
  $$
  \sum_{i=1}^{2n}\bar{\nabla}_{\bar{\nabla}_{\bar{J}F_i} (\bar{J}F_i)}\bar{J} =
  \bar{\nabla}_{\bar{J}\bar{\delta}\bar{J}}\bar{J} .
  $$
\end{lemma}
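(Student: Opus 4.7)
The plan is to evaluate the vector field $\bar{\nabla}_{\bar{J}F_i}(\bar{J}F_i)$ at the point $x$ and show that, after summation over $i$, one simply recovers $\bar{J}\bar{\delta}\bar{J}$; the claimed identity then follows from the tensorial (hence $C^\infty(M)$-linear) dependence of $\bar{\nabla}_{(-)}\bar{J}$ on its lower argument.

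First, I would exploit the local extension hypothesis. Since $(\bar{\nabla}F_i)(x)=0$ for every $i$, the Leibniz rule gives, at $x$,
\begin{equation*}
\bar{\nabla}_{\bar{J}F_i}(\bar{J}F_i) \;=\; (\bar{\nabla}_{\bar{J}F_i}\bar{J})(F_i) + \bar{J}(\bar{\nabla}_{\bar{J}F_i}F_i) \;=\; (\bar{\nabla}_{\bar{J}F_i}\bar{J})(F_i).
\end{equation*}
Next, I would apply Corollary~\ref{corollary1} with $X=F_i$ and $Y=-\bar{J}F_i$ (so that $\bar{J}Y=F_i$, using $\bar{J}^2=-\mathrm{Id}$ on $\f$), to obtain
\begin{equation*}
(\bar{\nabla}_{\bar{J}F_i}\bar{J})(F_i) \;=\; (\bar{\nabla}_{F_i}\bar{J})(-\bar{J}F_i) \;=\; -(\bar{\nabla}_{F_i}\bar{J})(\bar{J}F_i).
\end{equation*}

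The last ingredient is the standard identity obtained by differentiating $\bar{J}^2=-\mathrm{Id}$ on $\f$, namely $(\bar{\nabla}_X\bar{J})\circ\bar{J} = -\bar{J}\circ(\bar{\nabla}_X\bar{J})$. This transforms the previous line into
\begin{equation*}
(\bar{\nabla}_{\bar{J}F_i}\bar{J})(F_i) \;=\; \bar{J}\,(\bar{\nabla}_{F_i}\bar{J})(F_i).
\end{equation*}
Summing over $i=1,\dots,2n$ and using the definition of the Lee vector field yields $\sum_i \bar{\nabla}_{\bar{J}F_i}(\bar{J}F_i) = \bar{J}\bar{\delta}\bar{J}$ at~$x$, and the $C^\infty$-linearity of the map $Z\mapsto \bar{\nabla}_Z\bar{J}$ in $Z$ then gives
\begin{equation*}
\sum_{i=1}^{2n}\bar{\nabla}_{\bar{\nabla}_{\bar{J}F_i}(\bar{J}F_i)}\bar{J} \;=\; \bar{\nabla}_{\bar{J}\bar{\delta}\bar{J}}\bar{J}.
\end{equation*}

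There is no real obstacle here: the only non-trivial input is Corollary~\ref{corollary1}, which is precisely the ``integrability of $\bar{J}$ transverse to $\xi$'' distilled from the normality assumption. The slight care required is only to keep track of the sign produced by $\bar{J}^2=-\mathrm{Id}$ when rewriting $F_i=\bar{J}(-\bar{J}F_i)$ and to observe that the term $\bar{J}(\bar{\nabla}_{\bar{J}F_i}F_i)$ vanishes at $x$ thanks to the normal extension of the frame.
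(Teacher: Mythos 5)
Your argument is correct and coincides with the paper's own proof: both rest on the Leibniz rule together with $(\bar{\nabla}F_i)(x)=0$, the identity of Corollary~\ref{corollary1}, and the anticommutation $\bar{\nabla}\bar{J}\circ\bar{J}=-\bar{J}\circ\bar{\nabla}\bar{J}$ obtained from $\bar{J}^2=-\mathrm{Id}$ on $\f$. The signs are handled correctly, so there is nothing to add.
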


\begin{proof}
  Combining the Leibniz rule, Corollary~\ref{corollary1} and
  $\bar{\nabla} \bar{J} \circ \bar{J} =- \bar{J}\circ \bar{\nabla} \bar{J}$, yields
\begin{align*}
\sum_{i=1}^{2n}\bar{\nabla}_{\bar{J}F_i} (\bar{J}F_i) 
= \sum_{i=1}^{2n}(\bar{\nabla}_{\bar{J}F_i} \bar{J})(F_i) =- \sum_{i=1}^{2n}(\bar{\nabla}_{F_i} \bar{J})(\bar{J}F_i)= \bar{J}\bar{\delta}\bar{J} ,
\end{align*}
as required.
\end{proof}

We can now give the first harmonicity condition for a normal structure, reminiscent of \cite[Theorem 2.8]{CMW2}.

\begin{theorem}\label{thm3.1}
  Let $(M^{2n+1},g)$ be a Riemannian manifold equipped with a
  normal almost contact metric structure $(\theta,\xi,\eta)$.
  The first harmonic section equation~\eqref{hse1} for this
  structure can be written
  $$
  \sum_{i=1}^{2n}[\bar{R}(F_i,\bar{J}
  F_i),\bar{J}]=-2\bar{\nabla}_{\bar{\delta} \bar{J}} \bar{J},
  $$
where $\{F_i\}_{i=1,\dots, 2n}$ is an orthonormal frame on the
horizontal sub-bundle $\f$.
\end{theorem}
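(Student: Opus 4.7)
The plan is to evaluate $[\bar{\nabla}^*\bar{\nabla}\bar{J},\bar{J}]$ at an arbitrary point $x\in M^{2n+1}$ by computing $\bar{\nabla}^*\bar{\nabla}\bar{J}$ twice: first using the parallel frame $\{F_i\}\cup\{\xi\}$ of the statement, and then using the rotated frame $\{\bar{J}F_i\}\cup\{\xi\}$. Adding the two resulting expressions allows one to symmetrize the sum of second derivatives and absorb the residue $[\bar{\nabla}_{\bar{J}E}\bar{\nabla}_{\bar{J}E}\bar{J},\bar{J}]$ produced by Lemma~\ref{lemma3.2}.

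Extend an orthonormal basis of $\f_x$ to a horizontal frame with $\bar{\nabla}F_i(x)=0$ and recall from Section~\ref{sec2} that $\nabla_\xi\theta=0$, hence $\bar{\nabla}_\xi\bar{J}=0$, and from~\eqref{eq5} that $\nabla_\xi\xi=0$. Writing $\bar{\nabla}^*\bar{\nabla}\bar{J}=-\trace\bar{\nabla}^2\bar{J}$ over the full tangent bundle, the vertical term $\bar{\nabla}^2_{\xi,\xi}\bar{J}$ vanishes; in each horizontal term the Hessian correction $\bar{\nabla}_{\nabla_{F_i}F_i}\bar{J}$ collapses at $x$ to a multiple of $\bar{\nabla}_\xi\bar{J}=0$ (since $\bar{\nabla}_{F_i}F_i(x)=0$, only the vertical component of $\nabla_{F_i}F_i$ survives). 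Hence
$$\bar{\nabla}^*\bar{\nabla}\bar{J}|_x = -\sum_i \bar{\nabla}_{F_i}\bar{\nabla}_{F_i}\bar{J}.$$
The same computation with the orthonormal frame $\{\bar{J}F_i\}\cup\{\xi\}$, no longer parallel at $x$, gives horizontal part of $\nabla_{\bar{J}F_i}(\bar{J}F_i)|_x$ equal to $(\bar{\nabla}_{\bar{J}F_i}\bar{J})(F_i)$ (the piece $\bar{J}(\bar{\nabla}_{\bar{J}F_i}F_i)$ vanishes because $\bar{\nabla}F_i(x)=0$), vertical part again killed by $\bar{\nabla}_\xi\bar{J}=0$, and Lemma~\ref{lemma3.3} sums these horizontal corrections to $\bar{J}\bar{\delta}\bar{J}$. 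This yields
$$\bar{\nabla}^*\bar{\nabla}\bar{J}|_x = -\sum_i \bar{\nabla}_{\bar{J}F_i}\bar{\nabla}_{\bar{J}F_i}\bar{J} + \bar{\nabla}_{\bar{J}\bar{\delta}\bar{J}}\bar{J}.$$

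Summing the two expressions, commuting with $\bar{J}$, and substituting Lemma~\ref{lemma3.2} summed over $i$ (which transforms $\sum_i[\bar{\nabla}_{F_i}\bar{\nabla}_{F_i}\bar{J},\bar{J}] + \sum_i[\bar{\nabla}_{\bar{J}F_i}\bar{\nabla}_{\bar{J}F_i}\bar{J},\bar{J}]$ into $-2\sum_i[\bar{R}(F_i,\bar{J}F_i),\bar{J}] - 2\bar{\nabla}_{\bar{\delta}\bar{J}}\bar{J}$) produces
$$2[\bar{\nabla}^*\bar{\nabla}\bar{J},\bar{J}] = 2\sum_i[\bar{R}(F_i,\bar{J}F_i),\bar{J}] + 2\bar{\nabla}_{\bar{\delta}\bar{J}}\bar{J} + [\bar{\nabla}_{\bar{J}\bar{\delta}\bar{J}}\bar{J},\bar{J}].$$
The final bracket reduces to $2\bar{\nabla}_{\bar{\delta}\bar{J}}\bar{J}$ via the anti-commutation $\bar{J}\circ\bar{\nabla}_Y\bar{J}=-\bar{\nabla}_Y\bar{J}\circ\bar{J}$ together with the identity $\bar{\nabla}_{\bar{J}Y}\bar{J} = \bar{J}\circ\bar{\nabla}_Y\bar{J}$, the latter a direct consequence of Corollary~\ref{corollary1} applied with $Y$ replaced by $\bar{J}Z$. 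Dividing by $2$ and imposing~\eqref{hse1} then delivers the curvature equation of the theorem.

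The main delicate step is the bookkeeping of vertical contributions in the two Laplacian computations: it is precisely the vanishing of $\bar{\nabla}_\xi\bar{J}$ that both eliminates the Hessian off-diagonal terms and lets Lemma~\ref{lemma3.3} be invoked in its stated form. Without this, the $\{F_i\}$-versus-$\{\bar{J}F_i\}$ symmetrization would fail to cleanly cancel the Lemma~\ref{lemma3.2} residue and a stray second-derivative term would survive.
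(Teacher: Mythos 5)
Your proposal is correct and follows essentially the same route as the paper's proof: both evaluate the rough Laplacian in the two frames $\{F_i\}\cup\{\xi\}$ and $\{\bar{J}F_i\}\cup\{\xi\}$, use $\bar{\nabla}_\xi\bar{J}=0$ and $\nabla_\xi\xi=0$ to kill the vertical contributions, invoke Lemmas~\ref{lemma3.2} and~\ref{lemma3.3} together with Corollary~\ref{corollary1} to produce the curvature term, and reduce $[\bar{\nabla}_{\bar{J}\bar{\delta}\bar{J}}\bar{J},\bar{J}]$ to $2\bar{\nabla}_{\bar{\delta}\bar{J}}\bar{J}$. The only difference is cosmetic: you add the two Laplacian expressions to symmetrize, whereas the paper substitutes one into the other; the resulting identity $[\bar{\nabla}^*\bar{\nabla}\bar{J},\bar{J}]=\sum_i[\bar{R}(F_i,\bar{J}F_i),\bar{J}]+2\bar{\nabla}_{\bar{\delta}\bar{J}}\bar{J}$ is the same.
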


\begin{proof} Assume, without loss of generality, that the
  orthonormal frame $\{F_i\}_{i=1,\dots, 2n}$ satisfies
  $\bar{\nabla}F_i (x)=0$, for all $i$, at the point we evaluate
  at. First observe that
\begin{equation*}
-\bar{\nabla}^*\bar{\nabla}\bar{J} =\sum_{i=1}^{2n}(\bar{\nabla}_{F_i}\bar{\nabla}_{F_i}\bar{J} -\bar{\nabla}_{\nabla_{F_i}F_i}\bar{J}) +\bar{\nabla}_{\xi}\bar{\nabla}_{\xi}\bar{J} -\bar{\nabla}_{\nabla_{\xi}\xi}\bar{J} =\sum_{i=1}^{2n}\bar{\nabla}_{F_i}\bar{\nabla}_{F_i}\bar{J} 
\end{equation*}
since $\nabla F_i$ is in the $\xi$-direction, $\bar{\nabla}_{\xi}\bar{J}=0$ and $\nabla_{\xi}\xi =0$.
Working now with the orthonormal frame $\{\bar{J}F_i\}_{i=1,\dots,
  2n}$, 
$$-\bar{\nabla}^*\bar{\nabla}\bar{J}
  =\sum_{i=1}^{2n}\bar{\nabla}_{\bar{J}F_i}\bar{\nabla}_{\bar{J}F_i}\bar{J} -
    \bar{\nabla}_{\nabla_{\bar{J}F_i} (\bar{J}F_i)}\bar{J}
    -\bar{\nabla}^2_{\xi,\xi}\bar{J},$$ 
thus
\[
-\bar{\nabla}^*\bar{\nabla}\bar{J} =  \sum_{i=1}^{2n}\bar{\nabla}_{\bar{J}F_i}\bar{\nabla}_{\bar{J}F_i}\bar{J}  -
    \bar{\nabla}_{\bar{\nabla}_{\bar{J}F_i} (\bar{J}F_i)}\bar{J}  -g( \nabla_{\bar{J}F_i} (\bar{J}F_i), \xi ) \bar{\nabla}_{\xi} \bar{J} 
 =\sum_{i=1}^{2n}\bar{\nabla}_{\bar{J}F_i}\bar{\nabla}_{\bar{J}F_i}\bar{J}- \bar{\nabla}_{\bar{J}\bar{\delta} \bar{J}}\bar{J},
\]
by the previous lemma. Then,
$$-[\bar{\nabla}^*\bar{\nabla}\bar{J}
,\bar{J}](X)=\sum_{i=1}^{2n}[\bar{\nabla}_{\bar{J}F_i}\bar{\nabla}_{\bar{J}F_i}\bar{J},\bar{J}](X)
- [\bar{\nabla}_{\bar{J}\bar{\delta} \bar{J}}\bar{J},\bar{J} ](X).$$ 
Taking traces in Lemma~\ref{lemma3.2}, we get
\begin{align*}
-[\bar{\nabla}^*\bar{\nabla}\bar{J} , \bar{J}] (X)
=  -2\sum_{i=1}^{2n}[\bar{R}(F_i,\bar{J}F_i),\bar{J}](X) - 2 (\bar{\nabla}_{\bar{\delta}\bar{J}}\bar{J})(X)+[\bar{\nabla}^*\bar{\nabla} \bar{J} , \bar{J}](X) 
-[ \bar{\nabla}_{\bar{J}\bar{\delta} \bar{J}}\bar{J},\bar{J}](X).
\end{align*}
Notice that
$$[ \bar{\nabla}_{\bar{J}\bar{\delta} \bar{J}}\bar{J},\bar{J}](X) =
(\bar{\nabla}_{\bar{J}\bar{\delta}
  \bar{J}}\bar{J})(\bar{J}X)-\bar{J}(\bar{\nabla}_{\bar{J}\bar{\delta} \bar{J}}\bar{J})(X) 
= 2 (\bar{\nabla}_{\bar{\delta} \bar{J}}\bar{J})  (X),$$ 
by Corollary~\ref{corollary1}.
Therefore
$$[\bar{\nabla}^*\bar{\nabla}\bar{J} , \bar{J}]
(X)=\sum_{i=1}^{2n}[\bar{R}(F_i,\bar{J}F_i),\bar{J}](X)+ 2
(\bar{\nabla}_{\bar{\delta}\bar{J}}\bar{J})(X),$$
and the theorem follows.
\end{proof}

Recall 
that the rough Laplacian on $M$ is
given on $V \in \calX(M)$ by
\begin{equation} \label{laplacian}
\rough V=  - \trace_{g} \nabla^2 V=
  -\sum_{i=1}^{2n+1}  \nabla_{E_i}(\nabla_{E_i}V) -
  \nabla_{\nabla_{E_i}E_i}V
\end{equation}
where $\{E_i\}_{1 \le i \le 2n+1}$ is an orthonormal frame on $M$.  Moreover, if
$\phi \in \End TM$, let
\begin{equation} \label{deltatheta}
\delta\phi = \trace_{g} \nabla \phi = \sum_{i=1}^{2n+1}
  \nabla_{E_i} \phi(E_i) - \sum_{i=1}^m \phi(\nabla_{E_i} E_i).
\end{equation}

\begin{theorem} \label{thm3.2} Let $(M^{2n+1},g)$ be a Riemannian
  manifold equipped with a normal almost contact metric structure
  $(\theta,\xi,\eta)$. Then the second harmonic section
  equation~\eqref{hse2} for the structure $(\theta,\xi,\eta)$ is
  $$
  \nabla^{*}\nabla \xi - |\nabla\xi|^2\xi=
  (1/2)\sum_{i=1}^{2n}[R(F_i,\theta  F_i),\theta ] (\xi)+
  (\nabla_{\delta \theta} \theta) (\xi),
  $$
where $\{F_i\}_{i=1,\dots, 2n}$ is an orthonormal frame on the
horizontal sub-bundle $\f$ and $\delta\theta=\trace_g \nabla
\theta $. 
\end{theorem}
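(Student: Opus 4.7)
The plan is to follow the strategy of Theorem~\ref{thm3.1} and establish the pointwise identity
\[
\bar{J}\circ\trace(\bar{\nabla}\bar{J}\otimes\nabla\xi) = \tfrac{1}{2}\sum_{i=1}^{2n}\theta R(F_{i},\theta F_{i})\xi + \theta\nabla_{\delta\theta}\xi - \nabla^{*}\nabla\xi + |\nabla\xi|^{2}\xi
\]
for any normal acms. Once this is known, HSE2 reads $\bar{J}\circ\trace(\bar{\nabla}\bar{J}\otimes\nabla\xi) = -2\bigl(\nabla^{*}\nabla\xi-|\nabla\xi|^{2}\xi\bigr)$; substituting and solving for $\nabla^{*}\nabla\xi - |\nabla\xi|^{2}\xi$, together with $[R(F_{i},\theta F_{i}),\theta](\xi) = -\theta R(F_{i},\theta F_{i})\xi$ and $(\nabla_{\delta\theta}\theta)(\xi) = -\theta\nabla_{\delta\theta}\xi$ (both immediate from $\theta\xi=0$), yields the stated equation.

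I would work at a point $x$ with a local horizontal orthonormal frame $\{F_{i}\}_{i=1,\dots,2n}$ of $\f$ satisfying $\nabla F_{i}(x)=0$. Since $\theta\xi=0$, one has $\bar{J}\circ\trace(\bar{\nabla}\bar{J}\otimes\nabla\xi) = \sum_{i}\theta(\nabla_{F_{i}}\theta)(\nabla_{F_{i}}\xi)$. Expanding by Leibniz and applying equation~\eqref{eq4} of Corollary~\ref{corollary1.1} in the form $\theta\nabla_{F_{i}}\xi = \nabla_{\theta F_{i}}\xi$, then $\theta^{2}V = -V+\eta(V)\xi$ together with $\eta(\nabla_{F_{i}}\nabla_{F_{i}}\xi) = -|\nabla_{F_{i}}\xi|^{2}$, rewrites each summand as $\theta\nabla_{F_{i}}\nabla_{\theta F_{i}}\xi + \nabla_{F_{i}}\nabla_{F_{i}}\xi + |\nabla_{F_{i}}\xi|^{2}\xi$. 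Commuting covariant derivatives introduces $R(F_{i},\theta F_{i})\xi$ and the bracket $[F_{i},\theta F_{i}] = (\nabla_{F_{i}}\theta)(F_{i})$ at $x$; since \eqref{eq5} implies $(\nabla_{\xi}\theta)(\xi)=0$, one has $\delta\theta = \sum_{i}(\nabla_{F_{i}}\theta)(F_{i})$, so summation yields the intermediate equality
\[
\bar{J}\circ\trace(\bar{\nabla}\bar{J}\otimes\nabla\xi) = \sum_{i}\theta\nabla_{\theta F_{i}}\nabla_{F_{i}}\xi + \sum_{i}\theta R(F_{i},\theta F_{i})\xi + \theta\nabla_{\delta\theta}\xi - \nabla^{*}\nabla\xi + |\nabla\xi|^{2}\xi.
\]

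The decisive step is to evaluate the cross-term $\sum_{i}\theta\nabla_{\theta F_{i}}\nabla_{F_{i}}\xi$, which I do by re-running the preceding expansion with $F_{i}$ and $\theta F_{i}$ swapped. Corollary~\ref{corollary1} then supplies the key symmetry $\sum_{i}\theta(\nabla_{\theta F_{i}}\theta)(\nabla_{\theta F_{i}}\xi) = \bar{J}\circ\trace(\bar{\nabla}\bar{J}\otimes\nabla\xi)$, making the expression feed back onto itself; the Laplacian in the rotated frame produces a correction $\sum_{i}\nabla_{(\nabla_{\theta F_{i}}\theta)(F_{i})}\xi$ that I expect to identify with $\theta\nabla_{\delta\theta}\xi$. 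For this identification I differentiate $\theta^{2} = -\mathrm{Id} + \eta\otimes\xi$ to get $(\nabla_{X}\theta^{2})(Y) = g(\nabla_{X}\xi,Y)\xi + \eta(Y)\nabla_{X}\xi$; specialising to $X=Y=F_{i}$ and summing gives $\sum_{i}(\nabla_{F_{i}}\theta)(\theta F_{i}) = -\theta\delta\theta + (\Div\xi)\xi$, which Proposition~\ref{prop1} converts into $\sum_{i}(\nabla_{\theta F_{i}}\theta)(F_{i}) = \theta\delta\theta - (\Div\xi)\xi$; applying $\nabla_{(\cdot)}\xi$, the vertical $(\Div\xi)\xi$ piece is annihilated by \eqref{eq5} while \eqref{eq4} rewrites $\nabla_{\theta\delta\theta}\xi$ as $\theta\nabla_{\delta\theta}\xi$. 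Collecting all contributions delivers $2\bar{J}\circ\trace(\bar{\nabla}\bar{J}\otimes\nabla\xi) = \sum_{i}\theta R(F_{i},\theta F_{i})\xi + 2\theta\nabla_{\delta\theta}\xi - 2\nabla^{*}\nabla\xi + 2|\nabla\xi|^{2}\xi$, which is the desired identity. The main obstacle is precisely this vertical-bookkeeping in the symmetry step; the curvature commutations and Leibniz expansions are otherwise routine.
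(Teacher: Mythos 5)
Your argument is correct and follows essentially the same route as the paper's proof: the Leibniz expansion of $\sum_i\theta\circ(\nabla_{F_i}\theta)(\nabla_{F_i}\xi)$, the curvature commutation producing $R(F_i,\theta F_i)\xi$, the self-referential use of Proposition~\ref{prop1}/Corollary~\ref{corollary1} to make the trace term reappear and yield the factor of $2$, and the identification of the bracket and frame-correction terms with $\theta\nabla_{\delta\theta}\xi$ via differentiating $\theta^2=-\mathrm{Id}+\eta\otimes\xi$ together with $\nabla_\xi\xi=0$. The only cosmetic difference is that you keep the outer $\theta$ and hence the $|\nabla\xi|^2\xi$ term explicit throughout, whereas the paper works with the horizontal projection $(\nabla^*\nabla\xi)^{\mathcal{F}}$ and applies $\theta$ only at the end.
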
 

\begin{proof} 
  Notice that
  $\delta\theta= \sum_{i=1}^{2n}(\nabla_{F_i}\theta)(F_i)$
  (since, by Proposition~\ref{prop1},
  $(\nabla_{\xi}\theta)(\xi)=0$).  Equation~\eqref{hse2} can be
  written
  $$
  (\nabla^*\nabla \xi)^{\mathcal{F}}=-\tfrac{1}{2}\sum_{i=1}^{2n}\bar{J}\circ (\bar{\nabla}_{F_i} \bar{J})(\nabla_{F_i} \xi)=
-\tfrac{1}{2}\sum_{i=1}^{2n}\theta\circ (\nabla_{F_i}
\theta)(\nabla_{F_i} \xi).
$$
Working with an orthonormal frame $\{F_i\}_{i=1,\dots, 2n}$ as in
Theorem~\ref{thm3.1}, then \label{page:9} 
\begin{align*}
\sum_{i=1}^{2n}(\nabla_{F_i} \theta)(\nabla_{F_i} \xi) & = \sum_{i=1}^{2n}\nabla_{F_i} (\theta (\nabla_{F_i} \xi)) -\theta (\nabla_{F_i}\nabla_{F_i} \xi)
 = \sum_{i=1}^{2n}\nabla_{F_i}\nabla_{\theta F_i}\xi + \theta (\nabla^* \nabla \xi)\\
&= \sum_{i=1}^{2n}R(F_i,\theta F_i)\xi +\nabla_{\theta F_i} \nabla_{F_i}\xi + \nabla_{[F_i,\theta F_i]}\xi + \theta (\nabla^* \nabla \xi)\\
&= \sum_{i=1}^{2n}R(F_i,\theta F_i)\xi -\nabla_{\theta F_i} (\theta \nabla_{\theta F_i}\xi) + \nabla_{[F_i,\theta F_i]}\xi + \theta (\nabla^* \nabla \xi)\\
&=\sum_{i=1}^{2n} R(F_i,\theta F_i)\xi -(\nabla_{\theta F_i}
  \theta) (\nabla_{\theta F_i}\xi) -\theta (\nabla_{\theta
  F_i}\nabla_{\theta F_i}\xi) +\nabla_{[F_i,\theta F_i]}\xi + \theta (\nabla^* \nabla \xi)\\
&= \sum_{i=1}^{2n}R(F_i,\theta F_i)\xi - (\nabla_{\theta F_i} \theta)(\nabla_{\theta F_i}\xi) -\theta (\nabla_{\nabla_{\theta F_i}(\theta F_i)}\xi) 
+\nabla_{[F_i,\theta F_i]}\xi + 2\theta (\nabla^* \nabla \xi).
\end{align*}
Notice that 
\begin{align}
\sum_{i=1}^{2n}\theta (\nabla_{\nabla_{\theta F_i}(\theta F_i)}\xi) &= \sum_{i=1}^{2n}\nabla_{\theta (\nabla_{\theta F_i}\theta) (F_i)}\xi 
= -\nabla_{\delta \theta} \xi , \label{eqt} 
\end{align} 
by Lemma~\ref{lemma3.3}, the geodesic integral curves of $\xi$
and  the general property (since $\theta^2=- \mathrm{Id} + \eta\otimes
\xi$): 
$$
(\nabla_X \theta)(\theta Y)+ \theta(\nabla_X \theta)(Y)=
g(\nabla_X \xi, Y) \xi + g(Y, \xi) \nabla_X \xi,
$$
for any $X$ and $Y$ in $\calX(M)$.
Similarly, with Lemma~\ref{lemma3.3}, we have
\begin{align*}
\sum_{i=1}^{2n}\nabla_{[F_i,\theta F_i]}\xi &= \nabla_{\bar{\delta}\bar{J} +2 \sum_{i=1}^{2n}g( F_i, \nabla_{\theta F_i}\xi) \xi} \xi = \nabla_{\bar{\delta}\bar{J}} \xi 
=\nabla_{\delta \theta} \xi .
\end{align*}
Thus
$$
\sum_{i=1}^{2n}(\nabla_{F_i} \theta) ( \nabla_{F_i} \xi) = \tfrac{1}{2}\sum_{i=1}^{2n}R(F_i,\theta F_i)\xi  +\theta (\nabla^* \nabla \xi) + \nabla_{\delta \theta} \xi
$$
and
$$\tfrac{1}{2}\sum_{i=1}^{2n}\theta \circ (\nabla_{F_i} \theta)
  (\nabla_{F_i} \xi)  = \tfrac{1}{4}\sum_{i=1}^{2n}\theta R(F_i,\theta F_i)\xi
-\tfrac{1}{2} (\nabla^* \nabla \xi)^{\mathcal{F}} +
\tfrac{1}{2}\theta \nabla_{\delta \theta} \xi.
$$
Then the second harmonic equation~\eqref{hse2} is
$$
(\nabla^* \nabla \xi)^{\mathcal{F}} =
-\tfrac{1}{2}\sum_{i=1}^{2n}\theta R(F_i,\theta F_i)\xi -\theta
\nabla_{\delta \theta } \xi.
$$
Since $-\theta (\nabla_{\delta \theta} \xi) = (\nabla_{\delta \theta}
\theta) (\xi)$ and
$-\theta R(F_i,\theta F_i)\xi = [R(F_i,\theta F_i), \theta ]
(\xi)$, the theorem follows.
\end{proof}

When the vector field $\xi$ happens to be harmonic, as a section
of the unit tangent bundle i.e.~$\nabla^{*}\nabla \xi = |\nabla\xi|^2\xi$ (cf.~\cite{Perrone}),
the two harmonic section equations merge into a single one.

\begin{corollary}\label{corollary3.1}
  Let $(M^{2n+1},g)$ be a Riemannian manifold equipped with a
  normal almost contact metric structure $(\theta,\xi,\eta)$. If
  the vector field $\xi$ is harmonic, as unit section, then the
  almost contact metric structure $(\theta,\xi,\eta)$ is a harmonic
  section if and only if
  $$
  \sum_{i=1}^{2n}[R(F_i,\theta  F_i),\theta ] =-2\nabla_{\delta
    \theta} \theta .
  $$
\end{corollary}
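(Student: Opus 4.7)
The plan is to regard both sides of the claimed identity as $(1,1)$-tensors on $TM$ and to decompose it along the orthogonal splitting $TM = \mathcal{F} \oplus \langle\xi\rangle$. Setting
$$
\Theta := \sum_{i=1}^{2n}[R(F_i,\theta F_i),\theta] + 2\,\nabla_{\delta\theta}\theta,
$$
the target identity reads $\Theta = 0$, which is equivalent to the pair of conditions $\Theta(\xi) = 0$ and $\Theta(X) = 0$ for every horizontal $X$. The strategy is to match these two conditions with \eqref{hse2} (under the harmonic unit-section hypothesis on $\xi$) and with \eqref{hse1}, respectively, so that $\Theta = 0$ captures exactly the harmonicity of $(\theta,\xi,\eta)$ as a section.

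For the $\xi$-component, the hypothesis $\nabla^{*}\nabla\xi = |\nabla\xi|^2\xi$ makes the left-hand side of the formula in Theorem~\ref{thm3.2} vanish, so \eqref{hse2} becomes exactly $\Theta(\xi) = 0$; this step is immediate. For the horizontal component, one aims to reduce $\Theta(X) = 0$, for $X \in \mathcal{F}$, to the statement of Theorem~\ref{thm3.1}, namely $\sum_i[\bar R(F_i,\bar J F_i),\bar J] + 2\,\bar\nabla_{\bar\delta\bar J}\bar J = 0$ on $\mathcal{F}$. Since $\bar J = \theta|_{\mathcal{F}}$ and $\bar\nabla_X Y$ agrees with $\nabla_X Y$ modulo the $\xi$-component $-g(Y,\nabla_X\xi)\,\xi$, I would expand $[R(F_i,\theta F_i),\theta](X)$ and $(\nabla_{\delta\theta}\theta)(X)$ for horizontal $X$ and invoke the normality identities of Corollary~\ref{corollary1.1}, in particular $\nabla_\xi\xi = 0$, $\bar\nabla_\xi \bar J = 0$ and $\nabla_X\xi + \theta\,\nabla_{\theta X}\xi = 0$, together with the identity $\nabla_{\bar\delta\bar J}\xi = \nabla_{\delta\theta}\xi$ extracted from the proof of Theorem~\ref{thm3.2}, in order to replace the ambient quantities by their contact-subbundle analogues.

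The main obstacle is precisely this horizontal comparison: one must verify that the $\xi$-corrections produced by the O'Neill-type difference between $R$ and $\bar R$ cancel against those coming from $\nabla_{\delta\theta}\theta - \bar\nabla_{\bar\delta\bar J}\bar J$ after commutating with $\theta$ and summing over the horizontal frame. This amounts to a careful bookkeeping of vertical terms, mirroring the manipulations already performed in the proofs of Theorems~\ref{thm3.1} and~\ref{thm3.2}. Once settled, the two vanishing statements combine to show that $\Theta = 0$ is equivalent to the conjunction of \eqref{hse1} and \eqref{hse2}, which is the harmonicity of the almost contact metric structure.
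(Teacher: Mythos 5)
Your overall strategy coincides with the paper's: decompose the single tensorial equation $\Theta=0$ into its $\xi$-input part, which under the hypothesis $\nabla^*\nabla\xi=|\nabla\xi|^2\xi$ is exactly the content of Theorem~\ref{thm3.2}, and a horizontal part to be matched with Theorem~\ref{thm3.1}. The $\xi$-component step is correct and immediate, as you say. (One small point you leave implicit: since $[R(F_i,\theta F_i),\theta]$ and $\nabla_{\delta\theta}\theta$ are skew-adjoint, so is $\Theta$, hence $g(\Theta(X),\xi)=-g(X,\Theta(\xi))$ and the $\xi$-output on horizontal input is already controlled by $\Theta(\xi)=0$; this is what lets the two harmonic section equations exhaust all components of $\Theta=0$.)

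The genuine gap is that the horizontal comparison, which you yourself flag as ``the main obstacle'' and then defer as ``careful bookkeeping \dots once settled'', is the entire substance of the corollary; nothing in your text actually establishes it. Concretely, two things must be proved. First, one needs the Gauss-type relation
$\bar{R}(X,Y)=R^{\mathcal F}(X,Y)+r(\nabla_X\xi,\nabla_Y\xi)$, where $r(X,Y)Z=g(Y,Z)X-g(X,Z)Y$, relating the curvature of $(\mathcal F,\bar\nabla)$ to the $\mathcal F$-component of the ambient curvature (the paper quotes this from \cite{thesis}). Second --- and this is where normality is used in an essential way --- one must show that the correction term satisfies
$\sum_{i=1}^{2n}[\,r(\nabla_{F_i}\xi,\nabla_{\bar J F_i}\xi),\bar J\,]=0$,
which follows from Equation~\eqref{eq4}, i.e.~$\bar J\nabla_{\bar J X}\xi=-\nabla_X\xi$, by a short computation with the definition of $r$. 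Without this cancellation there is no reason for the ambient-curvature commutators $\sum_i[R(F_i,\theta F_i),\theta]$ restricted to $\mathcal F$ to agree with the $\bar R$-commutators appearing in Theorem~\ref{thm3.1}, so the equivalence of the horizontal part of $\Theta=0$ with \eqref{hse1} remains unproven. (The remaining identification of $(\nabla_{\delta\theta}\theta)^{\mathcal F}$ with $\bar\nabla_{\bar\delta\bar J}\bar J$ on horizontal arguments, using $\nabla_\xi\theta=0$ and $\theta\xi=0$, is indeed routine, but it too should be written out.) As it stands, the proposal is a correct plan with its central lemma missing.
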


\begin{proof} 
If $\xi$ is harmonic then the second harmonic equation simplifies to
$$
0=\sum_{i=1}^{2n}[R(F_i,\theta  F_i),\theta ] (\xi)+
2(\nabla_{\delta \theta} \theta) (\xi).
$$ 
The link between the $\f$-component of the curvature tensor of
$(M^{2n+1},g)$ and the curvature of the connection $\bar{\nabla}$
was established in~\cite{thesis}:
$$
\bar{R}(X,Y)= R^{\mathcal{F}}(X,Y)+r(\nabla_X \xi , \nabla_Y
\xi),
$$
where $r$ denotes the curvature tensor of the unit sphere,
i.e.~$r(X,Y)Z = g(Y , Z) X - g(X , Z) Y$. 
If one proves that
\begin{equation}\label{eqr}
\sum_{i=1}^{2n}[r(\nabla_{F_i} \xi , \nabla_{\bar{J}F_i} \xi),\bar{J} ] =0 ,
\end{equation}
then the first harmonic section equation could be rewritten as
\begin{equation} \label{eq*}
\sum_{i=1}^{2n}[R(F_i,\theta  F_i),\theta ] -  g([R(F_i,\theta  F_i),\theta ], \xi) \xi=-2\nabla_{\delta \theta} \theta+ 2  g(\nabla_{\delta \theta} \theta ,\xi) \xi ,
\end{equation}
which, applied to $X\in \f$, gives the first harmonic section equation 
$$
\sum_{i=1}^{2n}[R^{\mathcal{F}}(F_i,\theta  F_i),\theta ] (X)=
-2(\bar{\nabla}_{\delta \bar{J}} \bar{J})(X) ,
$$ 
which is the $\f$-part of 
$
\sum_{i=1}^{2n}[R(F_i,\theta  F_i),\theta ] =-2\nabla_{\delta
  \theta} \theta$.
When evaluated on $\xi$, Equation~\eqref{eq*} becomes
$$
\sum_{i=1}^{2n}[R(F_i,\theta  F_i),\theta ] (\xi)-
g([R(F_i,\theta  F_i),\theta ](\xi), \xi) \xi=-2(\nabla_{\delta
  \theta} \theta) (\xi)+ 2   g((\nabla_{\delta \theta}
\theta)(\xi) ,\xi) \xi
$$
which is the second harmonic section equation
$$
\sum_{i=1}^{2n}[R(F_i,\theta  F_i),\theta ] (\xi)
=-2(\nabla_{\delta \theta} \theta) (\xi),
$$
i.e.~the $\xi$-component of 
$\sum_{i=1}^{2n}[R(F_i,\theta  F_i),\theta ] =-2\nabla_{\delta
  \theta} \theta$.
To show that~\eqref{eqr} is indeed valid, we will only need
Proposition~\ref{prop1} and the definition of the tensor $r$. Let
$V$ be a horizontal vector field, then
\begin{align*} 
r(\nabla_{F_i}\xi , \nabla_{\bar{J}F_i}\xi) (\bar{J} V) &=  g(\nabla_{\bar{J}F_i}\xi, \bar{J} V) \nabla_{F_i}\xi - g(\nabla_{F_i}\xi , \bar{J}V) \nabla_{\bar{J}F_i}\xi \\
& = -g(\nabla_{F_i}\xi  ,  V ) \bar{J}\nabla_{\bar{J}F_i}\xi +  g(\nabla_{\bar{J}F_i}\xi , V) \bar{J}\nabla_{F_i}\xi \\
& = \bar{J}\big(r(  (\nabla_{F_i}\xi) , \nabla_{\bar{J}F_i}\xi)V\big),
\end{align*}
hence the conclusion.
\end{proof}

\section{The Morimoto submersion} \label{sec4}

This section rewrites the equations of Theorems~\ref{thm3.1} and
\ref{thm3.2} for the total space of a circle bundle, in order to
relate the harmonicity of its normal almost contact metric
structure with the harmonicity of the Hermitian structure on the
base. To ensure the normality of the structure on the total
space, we use Morimoto's version of the Boothby-Wang fibration
and extend~\cite{VW2}.  A {\em Morimoto submersion} will be a
holomorphic Riemannian submersion
$\pi: (M^{2n+1},\theta, \xi, \eta, g_M)\longrightarrow
(N^{2n}, J, g_N)$, i.e.~with $d\pi (\theta X) = J d\pi (X)$,
from a normal almost contact metric manifold onto a Hermitian
manifold.  Conditions for a circle bundle
$\pi: M^{2n+1} \to N^{2n}$ to be a Morimoto fibration are given,
in \cite{Mor1}, when $N^{2n}$ is a complex manifold, in
\cite{Mor2}, when $M^{2n+1}$ carries a normal almost contact
metric structure.

These results will lead in subsequent sections to a wide-ranging construction of harmonic invariant normal almost contact metric structures on homogeneous principal circle bundles over generalised flag manifolds.


Denote by $R,\nabla$, resp.~$R^N,\nabla^N$,  the curvature and
the connection on $M$, resp.~$N$.
We recall formulas due to O'Neill~\cite{ONeill} in our context.
Let $\hh$ and $\vv$ be the projections onto the horizontal and
vertical parts of $TM$ and define the tensors $A$ and $T$
by
$$
A_{E} F = \vv \nabla_{\hh E} \hh F + \hh \nabla_{\hh E} \vv F \quad T_{E}F = \vv \nabla_{\vv E} \hh F + \hh \nabla_{\vv E} \vv F .
$$
However, since our vertical distribution is one-dimensional and
$\xi$ has geodesic integral curves and is self-parallel, the
tensor $T$ is identically zero.

For the Morimoto fibration, the curvature identities of
\cite{ONeill} (with opposite sign convention for the curvatures)
are, for $X,Y,Z$ and $W$ horizontal vector fields and $X_*,Y_*,Z_*$ and $W_*$ their images under $d\pi$:
\begin{align}\label{curvformul}
    g_M(R(X,Y)Z, W ) 
    = &  g_N( R^N(X_*,Y_*)Z_*, W_*) + 2 g_M(\nabla_{X}Y ,\xi)g_M(
      \nabla_{Z}W , \xi)  \\
  &-  g_M(\nabla_{Y}Z,\xi)
      g_M(\nabla_{X}W,\xi) 
    - g_M(\nabla_{Z}X, \xi) g_M(\nabla_{Y}W, \xi) \notag\\
    g_M(R(X,Y)Z, \xi ) 
    &= - Z (g_M(\nabla_{X} Y, \xi)) + g_M( \nabla_Z X, \nabla_{Y} \xi) - g_M( \nabla_Z Y, \nabla_X\xi) \notag\\
    g_M(R(X,\xi)Y, \xi ) 
&= -  g_M(\nabla_{\xi}\nabla_{X} Y,\xi) +  g_M(\nabla_{\xi} X , \nabla_{Y} \xi) -  g_M(\nabla_{\xi} Y, \nabla_{X}\xi) \notag
\end{align}

The curvature expressions of the previous section enable us to
link the harmonicity of the structures on the total space and the
base of the Morimoto fibration.  This completes a result
of~\cite{VW2}.

\begin{theorem} \label{thm4.1} Let
  $\pi: (M^{2n+1},\theta, \xi, \eta, g_M)\longrightarrow
  (N^{2n}, J, g_N)$ be a holomorphic Riemannian
  submersion, i.e.~with $d\pi (\theta X) =  J d\pi (X)$,
  from a normal almost contact metric manifold onto a Hermitian
  manifold. Then the almost contact metric structure is a harmonic
  section if and only if $ J$ is a harmonic section and
  $$
  2(\nabla^{*}\nabla \xi )^{\f} +2 \theta(\nabla_{\delta \theta} \xi)
  +\theta\grad \big( g_M(\delta \theta ,\xi)\big)=0.
  $$ 
\end{theorem}

\begin{proof}  
Recall from Theorem~\ref{thm3.1}, the first harmonic section
equation~\eqref{hse1}:  
$$
\sum_{i=1}^{2n}[\bar{R}(F_i,\bar{J}
F_i),\bar{J}]=-2\bar{\nabla}_{\bar{\delta} \bar{J}} \bar{J} .
$$

 \begin{sublem}\label{lemma4.1} 
We have
$d\pi(\bar{\nabla}_{\bar{\delta} \bar{J}} \bar{J})=\nabla^N_{\delta J
} J$.
\end{sublem}

\begin{proof}  
  Since $\pi$ is a Riemannian submersion, we have for any section
  $X, Y$ in $\f$:
\begin{align*}
0=(\nabla d\pi)(X, Y)=(\nabla^N_{d\pi(X)}\hat{Y})\circ \pi -d\pi(\nabla_X Y),
\end{align*}
where $d\pi(Y)=\hat{Y}\circ \pi$.
Then,
\begin{align*}
d\pi\left((\bar{\nabla}_{\bar{\delta} \bar{J}} \bar{J})(X)\right)
&=(\nabla^N_{d\pi(\bar{\delta}\bar{J})}J\hat{X})\circ \pi -
  J(\nabla^N_{d\pi({\bar{\delta}\bar{J}})}\hat X)\circ \pi
  =(\nabla^N_{\delta J}J\hat{X})\circ \pi -J(\nabla^N_{\delta
  J}\hat{X})\circ \pi\\ 
&=(\nabla^N_{\delta  J} J)(\hat{X})\circ \pi,
\end{align*}
using that $d\pi(\bar{\delta}\bar{J})=(\delta  J)\circ \pi$ and
$d\pi(\bar{J}X)=( J \hat{X})\circ \pi$ if
$d\pi(X)=\hat{X} \circ \pi$.
\end{proof}

Then, we concentrate on the left-hand side of the equation.  Let
$\{F_i\}_{i=1,\dots,2n}$ be a local orthonormal frame of the
distribution $\f$ which projects to an orthonormal frame
$\{(F_{i})_{*}\}_{i=1,\dots,2n}$ on $N$.  By the expression
of $\bar{R}$ given in the proof of Corollary~\ref{corollary3.1}
and Equation~\eqref{eqr},
$$
\sum_{i=1}^{2n}[\bar{R}(F_i,\bar{J} F_i),\bar{J}] =\sum_{i=1}^{2n}
[R^{\f}(F_i,\bar{J} F_i),\bar{J}].
$$
By the curvature equations of a submersion~\cite{ONeill} and for
$Z$ and $H$ in $\f$, we have
\begin{align*}
&-\sum_{i=1}^{2n}g_M( [R^{\f}(F_i,\bar{J} F_i),\bar{J}](Z) ,H ) =
  \sum_{i=1}^{2n}-g_M( R^{\f}(F_i,\bar{J} F_i)(\bar{J}Z) ,H) - g_M( R^{\f}(F_i,\bar{J}
  F_i)Z ,\bar{J}H) \\ & = \sum_{i=1}^{2n}- g_N( R^N((F_i)_*,  J (F_i)_*)  JZ_*,H_*) - \frac{1}{2} \eta([F_i, \theta F_i]) \eta([\theta Z, H]) +\frac{1}{4} \eta([\theta F_i, \theta Z])  \eta([ F_i, H]) \\ &+\frac{1}{4} \eta([\theta Z, F_i])  \eta([ \theta F_i, H])-  g_N(  R^N((F_i)_*,  J (F_i)_*) Z_*,  JH_*) 
- \frac{1}{2} \eta([F_i, \theta F_i])\eta([Z, \theta H])\\
&+\frac{1}{4} \eta([\theta F_i, Z])  \eta([ F_i, \theta H]) +\frac{1}{4} \eta([Z, F_i])  \eta([ \theta F_i, \theta H]).
\end{align*}
Then, by Equations~\eqref{eq1} and \eqref{eq4} of
Corollary~\ref{corollary1.1},
\[
\eta([X, \theta Y])=-\eta([\theta X, Y]) \ \; \text{for any section $X, Y$ in
$\f$,}
\]
and we  deduce
\begin{align*}
-\sum_{i=1}^{2n}g_M( [R^{\f}(F_i,\bar{J} F_i),\bar{J}](Z) ,H ) 
& =\sum_{i=1}^{2n}- g_N( R^N((F_i)_*,  J (F_i)_*)-  g_N(  R^N((F_i)_*,  J (F_i)_*) Z_*,  JH_*)\\
&=-\sum_{i=1}^{2n} g_N ( [R^N((F_i))_*, J (F_i)_*, J](Z_*) ,H_* ).
\end{align*}
Combining with Sublemma~\ref{lemma4.1}, since $\pi$ is a Riemannian
submersion, we obtain:
$$
\sum_{i=1}^{2n}[R^N((F_i)_*, J (F_i)_*), J]=-2
\nabla^N_{\delta J } J .
$$
which gives the first condition of \cite{CMW2}.\\

First observe that $\xi$ must be Killing: by the equations of a
submersion~\cite{ONeill} and for $X$ and $Y$ in $\f$, we have
\begin{align*}
  g_M(\nabla_X \xi, Y)=-g_M(\xi, \nabla_X Y)=-
  g_M\Bigl(\xi, \frac{1}{2}[X, Y]\Bigr)=g_M(\xi,
  \nabla_Y X)=- g_M(\nabla_Y \xi, X). 
\end{align*}
By Theorem~\ref{thm3.2}, the second harmonic section equation~\eqref{hse2} is
$$
2(\nabla^{*}\nabla \xi )^{\f}= \sum_{i=1}^{2n}[R(F_i,\theta
F_i),\theta ] (\xi)+ 2(\nabla_{\delta \theta} \theta) (\xi),
$$
but 
$[R(F_i,\theta  F_i),\theta ] (\xi) = -\theta R(F_i,\theta  F_i)\xi$,
and, if $Z\in \f$,
$$-g_M(\theta R(F_i,\theta  F_i)\xi , Z) = -g_M(R(F_i,\theta
F_i)\theta Z , \xi).
$$
Then by O'Neill's formulas~\eqref{curvformul} 
\begin{align*}
-g_M(R(F_i,\theta  F_i)\theta Z , \xi) 
&= (\theta Z)\big( g_M(\nabla_{F_i} (\theta F_i ),\xi)\big) - g_M(\nabla_{\theta Z}F_i  , \nabla_{\theta F_i} \xi) + 
g_M(\nabla_{\theta Z}(\theta F_i) , \nabla_{F_i}\xi).
\end{align*}
Now, by the choice of extension, $\bar{\nabla}_{\theta Z}F_i =0$
at the point we evaluate at, and
\begin{align*}
g_M(\nabla_{\theta Z}(\theta F_i) , \nabla_{F_i}\xi)
  &=-g_M((\nabla_{\theta Z}\theta) (\theta^2 F_i),\nabla_{F_i}\xi)
    = - g_M((\nabla_{Z}\theta) (\theta F_i) , \nabla_{F_i}\xi) \\ &
= - g_M((\nabla_{Z}\theta) (\theta^2 F_i) , \nabla_{\theta F_i}\xi) = g_M(\nabla_{Z}(\theta F_i) - \theta(\nabla_{Z}F_i), \nabla_{\theta F_i}\xi) \\
&= - g_M(\theta \nabla_{Z}(\theta F_i) , \nabla_{F_i}\xi)  ,
\end{align*}
but on the other hand
\begin{align*}
g_M(\nabla_{\theta Z}(\theta F_i) , \nabla_{F_i}\xi) &= - g_M((\nabla_{Z}\theta) (\theta F_i) , \nabla_{F_i}\xi) 
= g_M(\theta \nabla_{Z} (\theta F_i), \nabla_{F_i}\xi).
\end{align*}
So $g_M(\nabla_{\theta Z}(\theta F_i) , \nabla_{F_i}\xi) =0$ and
Equation~\eqref{hse2} reduces to 
$$
2(\nabla^{*}\nabla \xi )^{\f} +2 \theta(\nabla_{\delta \theta} \xi)
=\sum_{i,j=1}^{2n}(\theta F_j)\big(g_M(\nabla_{F_i} (\theta F_i
),\xi)\big) F_j.
$$
Since
\begin{equation*}
  \sum_{i=1}^{2n} g_M(\nabla_{F_i} (\theta F_i ),\xi) =  g_M((\nabla_{F_i} \theta)( F_i ) + \theta(\nabla_{F_i} F_i),\xi) 
 = \sum_{i=1}^{2n} g_M((\nabla_{F_i} \theta)( F_i ) ,\xi) =  g_M(\delta \theta ,\xi), 
\end{equation*}
we obtain:
\begin{align*}
  \sum_{i,j=1}^{2n}(\theta F_j)\big( g_M(\nabla_{F_i} (\theta F_i ),\xi)\big) F_j 
& = \sum_{j=1}^{2n}(\theta F_j)\big( g_M(\delta \theta ,\xi)\big)
  F_j
  = -\sum_{j=1}^{2n}( F_j)\big( g_M(\delta \theta ,\xi)\big) \theta F_j 
 \\ & = -\theta \grad \big(  g_M(\delta \theta ,\xi)\big).
\end{align*}
Then Equation~\eqref{hse2} is:
$$
2(\nabla^{*}\nabla \xi )^{\f} +2 \theta(\nabla_{\delta \theta} \xi)
+\theta\grad \big( g_M(\delta \theta ,\xi)\big)=0,
$$
as required.
\end{proof}

\begin{theorem} \label{thm4.1-harm} Let
  $\pi: (M^{2n+1},\theta, \xi, \eta, g_M)\longrightarrow
  (N^{2n}, J, g_N)$ be a holomorphic Riemannian
  submersion, i.e.~with $d\pi (\theta X) =  J d\pi (X)$,
  from a normal almost contact metric manifold onto a Hermitian
  manifold. If the almost contact metric structure $(\theta, \xi, \eta)$ is a harmonic
  section then it also defines a harmonic map if and
  only if $\xi(\vert \nabla \xi\vert^2)=0$, i.e. the bending of $\xi$ is constant along its curves (cf.~\cite{Wiegmink}), and, for all $X$ in
  $\f$:
  $$
  \sum_{i,j=1}^{2n} g_N(  R^N((F_i)_*, X_* )(F_j)_* ,  J(\nabla^N_{(F_i)_*}  J)(F_{j})_*) = 4 g_M(\nabla^* \nabla \xi ,  \nabla_{X}\xi),
$$
where $\{F_i\}_{i=1,\dots, 2n}$ is an orthonormal frame on the
horizontal sub-bundle $\f$.
\end{theorem}

\begin{proof}
A harmonic almost contact metric structure will also be a
harmonic map if 
\begin{equation}\label{eqhme}
\sum_{i=1}^{2n+1} \frac14 g_M(\bar{J} (\bar{\nabla}_{E_i}
\bar{J}) , R(E_{i}, X))  +  g_M(\nabla_{E_i}\xi , R(E_{i},X)\xi) =0,
\end{equation}
for any $X\in \calX(M)$, where $\{F_i\}_{i=1,\dots,2n}$ is an
orthonormal frame of the distribution $\f$ and  
$\{E_i\}_{1 \le i \le 2n+1}$ $=\{F_i\}_{1\le i\le 2n}\cup \{\xi\}$ a local orthonormal frame of $TM$.

First take $X\in \f$, extend it into a basic vector field in
$\f$, still called $X$, so Equation~\eqref{eqhme} reads: 
\begin{equation}\label{eqhmef}
\sum_{i=1}^{2n+1} \frac14 \sum_{j=1}^{2n} g_M(\bar{J} (\bar{\nabla}_{E_i} \bar{J})(F_j) , R(E_{i}, X)(F_j)) + g_M(\nabla_{E_i}\xi , R(E_{i},X)\xi) =0,
\end{equation}
and using O'Neill's formulas~\eqref{curvformul}, the first part
of \eqref{eqhmef} can be developed into 
\begin{align}
     g_M(\bar{J} (\bar{\nabla}_{E_i} \bar{J})(F_j) , R(E_{i}, X)(F_j)) &= g_M(\bar{J} (\bar{\nabla}_{F_i} \bar{J})(F_j) , R(F_{i}, X)(F_j)) +
    g_M(\bar{J} (\bar{\nabla}_{\xi} \bar{J})(F_j) , R(\xi, X)(F_j)) \notag \\
&= g_N(  R^N((F_i)_*, X_* )(F_j)_* ,
   J(\nabla^N_{(F_i)_*}  J)(F_{j})_*) \notag \\ & +2
  g_M(A_{F_i}X, A_{F_j} (\bar{J}(\bar{\nabla}_{F_i} \bar{J})(F_{j}))) \notag
   - g_M(A_{X}  F_j,A_{F_i}(\bar{J}(\bar{\nabla}_{F_i} \bar{J})(F_{j}))) \notag\\
  &-   g_M(A_{F_j}F_i,A_{X} (\bar{J}(\bar{\nabla}_{F_i} \bar{J})(F_{j}))) + g_M(R(\xi, X)(F_{j}) , \bar{J}(\bar{\nabla}_{\xi} \bar{J})(F_{j})) \notag\\
&= g_N(  R^N((F_i)_*, X_* )(F_j)_* ,  J(\nabla^N_{(F_i)_*}  J)(F_{j})_*) \label{e1}\\
&+2 g_M(\nabla_{F_i}X,\xi) g_M(\nabla_{F_j} (\bar{J}(\bar{\nabla}_{F_i} \bar{J})(F_{j})), \xi) \label{e2}\\
&- g_M(\nabla_{X}  F_j,\xi) g_M(\nabla_{F_i}(\bar{J}(\bar{\nabla}_{F_i} \bar{J})(F_{j})), \xi)\label{e3}\\
&- g_M(\nabla_{F_j}F_i,\xi) g_M(\nabla_{X} (\bar{J}(\bar{\nabla}_{F_i} \bar{J})(F_{j})), \xi) \label{e4}\\
&+g_M(R(\xi, X)(F_{j}) , \bar{J}(\bar{\nabla}_{\xi} \bar{J})(F_{j})) \label{e5}.
\end{align}
The term~\eqref{e5} is zero because $\nabla_{\xi} Y$ is
horizontal whenever $Y$ is horizontal, as $\nabla_{\xi}\xi =0$,
and $\nabla_{\xi} \theta =0$ by Corollary~\ref{corollary1.1}.\\ 
The term~\eqref{e2} vanishes because
\begin{align*}
    g_M(\nabla_{F_j} (\bar{J}(\bar{\nabla}_{F_i} \bar{J})(F_{j})), \xi)  & = -
     g_M(\bar{J}(\bar{\nabla}_{F_i} \bar{J})(F_{j}),  \nabla_{F_j} \xi)=
     g_M((\bar{\nabla}_{F_i} \bar{J})(F_{j}),  \theta \nabla_{F_j} \xi)
  \\ &= g_M((\bar{\nabla}_{F_i} \bar{J})(F_{j}),  \nabla_{\theta F_j} \xi)
    = -g_M((\bar{\nabla}_{F_i} \bar{J})(\theta F_{j}),  \nabla_{F_j}
       \xi) \\ &= -g_M((\bar{\nabla}_{F_i} \bar{J})(\bar{J} F_{j}),  \nabla_{F_j}
       \xi) = g_M(\bar{J} (\bar{\nabla}_{F_i} \bar{J})(F_{j}),  \nabla_{F_j} \xi).
\end{align*}
The sum of the terms~\eqref{e3} and \eqref{e4} is
\begin{align*}
 & - \sum_{i,j=1}^{2n}g_M(\nabla_{X}  F_j,\xi) g_M(\nabla_{F_i}(\bar{J}(\bar{\nabla}_{F_i} \bar{J})(F_{j})), \xi)
- \sum_{i,j=1}^{2n}g_M(\nabla_{F_j}F_i,\xi) g_M(\nabla_{X} (\bar{J}(\bar{\nabla}_{F_i} \bar{J})(F_{j})), \xi) \\
&=  - \sum_{i,j=1}^{2n}g_M(F_j, \nabla_{X}\xi) g_M(\bar{J}(\bar{\nabla}_{F_i} \bar{J})(F_{j}), \nabla_{F_i}\xi)
- g_M(F_i, \nabla_{F_j}\xi) g_M(\bar{J}(\bar{\nabla}_{F_i} \bar{J})(F_{j}), \nabla_{X}\xi) \\
&=  - \sum_{i=1}^{2n}g_M(\bar{J}(\bar{\nabla}_{F_i} \bar{J})(\nabla_{X}\xi), \nabla_{F_i}\xi)
+ g_M(\bar{J}(\bar{\nabla}_{F_i} \bar{J})(\nabla_{F_i}\xi), \nabla_{X}\xi) \\
&= 2\sum_{i=1}^{2n}g_M(\bar{J}(\bar{\nabla}_{F_i} \bar{J})(\nabla_{F_i}\xi), \nabla_{X}\xi) = -4 g_M(\nabla^* \nabla \xi ,  \nabla_{X}\xi),
\end{align*}
by \eqref{hse2}.

The second term in \eqref{hme} when $X\in \f$ is, using \eqref{curvformul}:
\begin{align*}
   &\sum_{i=1}^{2n}g_M(\nabla_{F_i}\xi , R(F_{i},X)\xi) +  g_M(\nabla_{\xi}\xi , R(\xi,X)\xi)
   = \sum_{i=1}^{2n}g_M(\nabla_{F_i}\xi , R(F_{i},X)\xi)  \\
   &= \sum_{i=1}^{2n}g_M(\nabla_{\nabla_{F_i}\xi} \Big( \nabla_{F_i} X \Big) ,\xi) + 
   g_M(\nabla_{F_i}X , \nabla_{\nabla_{F_i}\xi} \xi) +
    g_M(X , \nabla_{\nabla_{\nabla_{F_i}\xi} F_i} \xi) + 
     g_M(\nabla_{\nabla_{F_i}\xi}X ,\nabla_{F_i}\xi) \\
     &=  \sum_{i=1}^{2n} - g_M(\nabla_{F_i} X  , \nabla_{\nabla_{F_i}\xi} \xi)
     + g_M(\nabla_{F_i}X , \nabla_{\nabla_{F_i}\xi} \xi) =0.
\end{align*}
So the condition \eqref{hme} for $X\in \f$ is equivalent to:
$$
\sum_{i,j=1}^{2n} g_N(  R^N((F_i)_*, X_* )(F_j)_* ,  J(\nabla^N_{(F_i)_*}  J)(F_{j})_*) = 4 g_M(\nabla^* \nabla \xi ,  \nabla_{X}\xi).
$$

The first term in Equation~\eqref{eqhme} for the case $X=\xi$ is:
$$
\sum_{i=1}^{2n+1}g_M(\bar{J} (\bar{\nabla}_{E_i} \bar{J}) , R(E_{i}, \xi))
= \sum_{i,j=1}^{2n}g_M(\bar{J} (\bar{\nabla}_{F_i} \bar{J})(F_j) , R(F_{i}, \xi) F_j).
$$
Let $Y = \sum_{i,j=1}^{2n}\bar{J} (\bar{\nabla}_{F_i} \bar{J})(F_j)$ then $Y = \sum_{i,j=1}^{2n}\theta (\nabla_{F_i} \theta)(F_j)$.
Using O'Neill's formula or the fact that $\xi$ is a Killing
vector field, so $\nabla^{2}_{X,Y} \xi = -R(\xi, X)Y$, we have 
\begin{align*}
   & \sum_{i,j=1}^{2n}g_M(\bar{J} (\bar{\nabla}_{F_i} \bar{J})(F_j) , R(F_{i}, \xi) F_j) =
   \sum_{i,j=1}^{2n}g_M(\theta (\nabla_{F_i} \theta)(F_j) , \nabla_{F_i}\nabla_{F_j} \xi - \nabla_{\nabla_{F_i}F_j} \xi) \\
   &=
   \sum_{i,j=1}^{2n}g_M(\theta (\nabla_{F_i} \theta)(\theta F_j) , \nabla_{F_i}\nabla_{\theta F_j} \xi - \nabla_{\nabla_{F_i}\theta F_j} \xi) \\
   &=
   \sum_{i,j=1}^{2n}g_M(-\theta^2 (\nabla_{F_i} \theta)( F_j) , (\nabla_{F_i}\theta)(\nabla_{ F_j} \xi)
   + \theta(\nabla_{F_i}\nabla_{F_j} \xi)
   - \nabla_{(\nabla_{F_i}\theta)(F_j) + \theta (\nabla_{F_i} F_j)} \xi) \\
   &=
   \sum_{i,j=1}^{2n} g_M(-\theta^2 (\nabla_{F_i} \theta)( F_j) , (\nabla_{F_i}\theta)(\nabla_{ F_j} \xi))
   + g_M(-\theta^2 (\nabla_{F_i} \theta)( F_j) , \theta(\nabla_{F_i}\nabla_{F_j} \xi))\\
   &+ g_M(\theta^2 (\nabla_{F_i} \theta)( F_j) ,\nabla_{(\nabla_{F_i}\theta)(F_j)} \xi))
\end{align*}
and the last term is zero because $\xi$ is Killing.
It  follows that 
$$2 \sum_{i,j=1}^{2n}g_M(\theta (\nabla_{F_i} \theta)( F_j) , \nabla_{F_i}\nabla_{F_j} \xi)$$ 
is
equal to:
\begin{align*}
& \sum_{i,j=1}^{2n}g_M(-\theta^2 (\nabla_{F_i} \theta)( F_j) ,
  (\nabla_{F_i}\theta)(\nabla_{ F_j} \xi))
  =\sum_{i,j=1}^{2n}g_M(-\theta[\theta(\nabla_{F_i} \theta)( F_j)]
  , (\nabla_{F_i}\theta)(\nabla_{ F_j} \xi)) \\ &=\sum_{i,j=1}^{2n}g_M(-\theta[-(\nabla_{F_i} \theta)( \theta
  F_j)+g_M(\nabla_{F_i} \xi, F_j) \xi],
  (\nabla_{F_i}\theta)(\nabla_{ F_j} \xi))  =\sum_{i,j=1}^{2n}g_M(\theta(\nabla_{F_i} \theta)( \theta F_j), (\nabla_{F_i}\theta)(\nabla_{ F_j} \xi))\\
&=\sum_{i,j=1}^{2n} g_M(-(\nabla_{F_i} \theta)( \theta^2 F_j)+g_M(\nabla_{F_i}\xi,\theta { F_j}) \xi, (\nabla_{F_i}\theta)(\nabla_{ F_j} \xi))\\
&=\sum_{i,j=1}^{2n}g_M((\nabla_{F_i} \theta)(F_j),
  (\nabla_{F_i}\theta)(\nabla_{ F_j} \xi)) +\sum_{i,j=1}^{2n}g_M(\nabla_{F_i}\xi,\theta F_j) g_M(\xi, (\nabla_{F_i}\theta)(\nabla_{ F_j} \xi)).
\end{align*}
The first term on the right-hand side of the equation is:
\begin{align*}
\sum_{i,j=1}^{2n}g_M((\nabla_{F_i} \theta)(F_j), (\nabla_{F_i}\theta)(\nabla_{ F_j} \xi))
&=\sum_{i,j,k=1}^{2n}g_M((\nabla_{F_i} \theta)(F_j), (\nabla_{F_i}\theta)(g_M(\nabla_{ F_j} \xi, F_k) F_k))\\
&=\sum_{i,j,k=1}^{2n}g_M((\nabla_{F_i} \theta)(F_j), (\nabla_{F_i}\theta)(F_k)) g_M(\nabla_{ F_j} \xi, F_k)  =0.
\end{align*}
The second term on the right-hand side of the equation is:
\begin{align*}
&\sum_{i,j=1}^{2n}g_M(\nabla_{F_i}\xi,\theta F_j) g_M(\xi, (\nabla_{F_i}\theta)(\nabla_{ F_j} \xi))
=\sum_{i,j=1}^{2n}g_M(\nabla_{F_i}\xi,\theta F_j) g_M(\theta(\nabla_{F_i}\xi), \nabla_{ F_j} \xi) \\
&=-\sum_{i,j=1}^{2n}g_M(\nabla_{F_i}\xi,\theta F_j)g_M(\nabla_{F_i}\xi, \nabla_{ \theta F_j} \xi) =-\sum_{i,j=1}^{2n}g_M(\nabla_{F_i}\xi,
  \nabla_{\nabla_{F_i}\xi}\xi) =0.
\end{align*}
Then the first term of \eqref{hme} for $X=\xi$ is equal to zero.\\

As to the second term of the harmonic map equation when $X=\xi$,
we have, using the third and last of the
Formulas~\eqref{curvformul} 
\begin{align*}
&\sum_{i=1}^{2n+1}g_M(\nabla_{E_i}\xi , R(E_i , \xi)\xi) = -
  \sum_{i=1}^{2n}g_M(R(F_i , \xi)\nabla_{F_i}\xi, \xi) \\ 
&=  \sum_{i=1}^{2n}g_M(\nabla_{\xi}(\nabla_{F_i} \nabla_{F_i} \xi) , \xi)  - g_M(\nabla_{\xi} F_i , \nabla_{\nabla_{F_i}\xi} \xi) +  g_M(\nabla_{\xi}\nabla_{F_i} \xi , \nabla_{F_i} \xi) \\
&= \sum_{i=1}^{2n}\xi\Big( g_M(\nabla_{F_i} \nabla_{F_i} \xi , \xi) \Big)  + \frac12 \xi \Big (g_M(\nabla_{F_i}\xi ,\nabla_{F_i}\xi)\Big)= \xi\Big( g_M(\sum_{i=1}^{2n}\nabla_{F_i} \nabla_{F_i} \xi +  \frac12  \vert \nabla\xi\vert^2 \xi , \xi) \Big)  \\
&= - \frac12 \xi\Big( g_M(\nabla^* \nabla \xi , \xi)\Big) =  - \frac12 \xi\Big( \vert \nabla\xi\vert^2)\Big).
\end{align*}
So the condition \eqref{hme} for $X=\xi$ is equivalent to $\xi\Big( \vert \nabla\xi\vert^2 \Big)=0$.
\end{proof}

\begin{corollary}  \label{cor4.1}
Let $\pi: (M^{2n+1},\theta, \xi, \eta, g_M)\longrightarrow
(N^{2n}, J, g_N)$ be a holomorphic Riemannian submersion
from a normal almost contact metric  manifold onto a Hermitian manifold.  

If $\xi$ is a harmonic unit vector field then the almost contact metric
structure is a  harmonic section if and only if $ J$ is a
harmonic section and $2 \nabla_{\delta \theta} \xi 
+\grad \big( g_M(\delta \theta ,\xi)\big)=~0$.
Moreover $(\theta, \xi, \eta)$ defines a harmonic map if and only
if $ J$ is a harmonic map and $\xi(\vert \nabla
\xi\vert^2)=0$.

If $\xi$ is parallel, then the almost contact metric structure is a
harmonic section if and only if $J$ is a harmonic
section. 
Moreover $(\theta, \xi, \eta)$ defines a harmonic map if and only
if $ J$ is a harmonic map.
\end{corollary}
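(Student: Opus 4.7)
The strategy is to specialise Theorem~\ref{thm4.1} to each of the two hypotheses on $\xi$ and verify that every auxiliary condition either collapses to $0=0$ or matches the equation stated in the corollary.

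I would first treat the case where $\xi$ is a harmonic unit vector field, so that $\wnabla^{*}\wnabla\xi=|\wnabla\xi|^{2}\xi$ and hence $(\wnabla^{*}\wnabla\xi)^{\f}=0$. Substituting this into the first harmonic section equation of Theorem~\ref{thm4.1} collapses it to
$$
\theta\bigl(2\wnabla_{\delta\theta}\xi+\grad(\wkappa(\delta\theta,\xi))\bigr)=0.
$$
To recover the stated identity I would apply $\theta$ once more, use $\theta^{2}=-\mathrm{Id}+\eta\otimes\xi$, and verify that the $\xi$-component of the bracketed vector vanishes. The term $2\wkappa(\wnabla_{\delta\theta}\xi,\xi)$ is zero because $|\xi|^{2}=1$, and $\xi\bigl(\wkappa(\delta\theta,\xi)\bigr)=0$ because $\xi$ is Killing (a fact already derived inside the proof of Theorem~\ref{thm4.1}) and satisfies $\wnabla_{\xi}\theta=0$ by Corollary~\ref{corollary1.1}, so its flow preserves $\theta$ and $\wkappa$, and consequently $\delta\theta$. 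For the harmonic map conclusion, the same hypothesis $\wnabla^{*}\wnabla\xi=|\wnabla\xi|^{2}\xi$ combined with $\wnabla_{X}\xi\perp\xi$ makes the right-hand side $4\wkappa(\wnabla^{*}\wnabla\xi,\wnabla_{X}\xi)$ in Theorem~\ref{thm4.1} vanish, and the remaining identity is precisely the harmonic map equation for~$J$ on $(M,\kappa)$; the extra condition $\xi(|\wnabla\xi|^{2})=0$ follows again from $L_{\xi}\wkappa=0$ together with $L_\xi\xi=0$.

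In the second case, where $\xi$ is parallel, $\wnabla\xi=0$ trivially yields $\wnabla^{*}\wnabla\xi=0$ and places us a fortiori in the harmonic unit case. Applying $\eta$ to Proposition~\ref{prop1} gives $\eta\bigl((\wnabla_{X}\theta)(Y)\bigr)=-\wkappa(\theta Y,\wnabla_{X}\xi)$, which now vanishes identically. Hence $\delta\theta\in\f$ and $\wkappa(\delta\theta,\xi)=0$, while $\wnabla_{\delta\theta}\xi=0$, so every auxiliary term in Theorem~\ref{thm4.1} drops out and harmonicity of $(\theta,\xi,\eta)$—as a section or as a map—reduces to the corresponding property of~$J$.

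The only genuinely delicate step is the passage from $\theta V=0$ to $V=0$ in the first case: this requires the $\xi$-component of $V=2\wnabla_{\delta\theta}\xi+\grad(\wkappa(\delta\theta,\xi))$ to vanish, a fact not forced by the algebraic structure of $\theta$ alone but granted by the Killing property of $\xi$ already available in the Morimoto setting. Everything else in the corollary is then a direct substitution into Theorem~\ref{thm4.1}.
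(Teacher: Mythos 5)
Your proposal is correct and follows the route the paper intends: Corollary~\ref{cor4.1} is stated without a separate proof precisely because it is the direct specialisation of Theorem~\ref{thm4.1} that you carry out, and your treatment of both cases (harmonic unit $\xi$ and parallel $\xi$) matches that reading. The one step the paper leaves silent --- passing from $\theta V=0$ to $V=0$ for $V=2\wnabla_{\delta\theta}\xi+\grad\bigl(\wkappa(\delta\theta,\xi)\bigr)$ --- you identify and resolve correctly, with the small caveat that ``the flow of $\xi$ preserves $\theta$'' requires $L_{\xi}\theta=0$, which follows from $\wnabla_{\xi}\theta=0$ combined with $\theta\wnabla_{Y}\xi=\wnabla_{\theta Y}\xi$ (a consequence of~\eqref{eq4}), and not from $\wnabla_{\xi}\theta=0$ and the Killing property alone.
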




One can endow a principal $\bbS^1$-bundle $\pi : M \sto N$ over a
complex manifold $N$, equipped with a connection~$\eta$, of a
\NACS (see \cite{Mor1}). Moreover, from a
Hermitian metric on $N$ one deduces a \NACMS on the total space
$M$, cf.~\cite{Hat,Mor1,Ogi}. 
We may apply Theorem~\ref{thm4.1} and
Corollary~\ref{cor4.1} in this situation, as we now explain.

 Let
 $ \exp(\R A) = \bbS^1$ be a one-dimensional compact torus
 where $\fs= \Lie(\bbS^1) = \R A$. Let $\pi : M \sto N$ be a
 principal $\bbS^1$-bundle over a (connected) differentiable manifold
 $N$. Fix a principal connection $\eta : N \to \fs$; if we
 identify $\fs$ with $\R$ via $A \mto 1$, one can view $\eta$ as
 an $\bbS^1$-invariant $\R$-valued form on $N$. Since $\bbS^1$ is abelian,
 the curvature of $\eta$ is given
 by $d\eta(X,Y)= - \eta([X,Y])$ for $X,Y \in \calX(M)$. The
 connection $\eta$ yields a splitting ${TM= HM \boplus VM}$
 where $VM = \Ker d\pi$ is the vertical bundle and
 $HM= \Ker \eta$ is the horizontal bundle. Let $p \in M$, the
 inverse of the isomorphism $d\pi(p) : H_pM \isomto T_{\pi(p)}N$
 is denoted by $\psi(p)$.  
 The vector field
 induced by the (right) action of $\bbS^1$ on $M$ is denoted by
 $\xi$, that is to say
 $\xi_p= \frac{d}{dt}_{\mid t=0} p e^{t X_0}$ for all
 $p \in M$.  From \cite[\S2]{Hat} \cite[Theorem~6]{Mor1} and
 \cite[\S3]{Ogi} we get the following result.

 \begin{thm}
   \label{thm1.1}
   Let $J$ be an almost complex structure on $N$ and denote by
   $(g_N, J)$ an almost Hermitian metric. Set
   \begin{equation*} 
g_M = \pi^* g_N + \eta \otimes \eta,
\end{equation*}
and define $\theta \in \End TM$ by
\begin{equation}
  \label{eq3.1}
  \theta_p(X_p) = \psi(p)(Jd\pi(p).X_p) \ \, \text{for all $X \in
    \calX(M)$ and $p \in M$}.
\end{equation}
 Then, $\sigma= \txeM$ is an \ACMS on $M$ such that $\pi \circ
\theta= J \circ \pi$. The Reeb vector field $\xi$ is Killing for
$g_M$.
\\
Furthermore, the \ACS $\txe$ is normal \sissi $J$ is integrable and
$d\eta= \pi^* \Sigma$ for some $\Sigma \in \Omega^2(N,\R)$ such
that $\Sigma$ is $J$-invariant, i.e.~$\Sigma(JX,JY)= \Sigma(X,Y)$
for all $X,Y \in \calX(N)$.
 \end{thm}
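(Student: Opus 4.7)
The plan is to verify the four assertions in sequence, working throughout with the decomposition $T\Mt = H\Mt \boplus \R\xi$ and the bracket formula $[X^\sharp, Y^\sharp] = [X,Y]^\sharp - d\eta(X^\sharp, Y^\sharp)\,\xi$ for basic horizontal lifts $X^\sharp$ of vector fields on $M$, which encodes the structure equation of the curvature of the abelian principal connection~$\eta$.

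First I would check that $\sigma = \txek$ defines an ACMS. The identities $\eta(\xi) = 1$ and $\theta\xi = 0$ are immediate: $\xi$ is the fundamental vector field corresponding to $A \mto 1 \in \fs$, and $d\pi(\xi) = 0$ forces $\theta\xi = 0$ through~\eqref{eq3.1}. For $\theta^2 = -\id + \eta\otimes\xi$, decompose an arbitrary $V \in T_p\Mt$ as $V = V^H + \eta(V)\xi_p$; then $\theta V = \psi(p)\bigl(Jd\pi(V^H)\bigr)$ is horizontal and satisfies $d\pi(\theta V) = Jd\pi V$, whence $\theta^2 V = \psi(p)(J^2 d\pi V) = -V^H = -V + \eta(V)\xi_p$. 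Compatibility of $\wkappa$ reduces to a short calculation: since $\theta X, \theta Y$ lie in $\Ker\eta$, one has $\wkappa(\theta X, \theta Y) = \pi^*\kappa(\theta X, \theta Y) = \kappa(Jd\pi X, Jd\pi Y) = \kappa(d\pi X, d\pi Y) = \wkappa(X,Y) - \eta(X)\eta(Y)$, using compatibility of $(\kappa, J)$ on $M$. The relation $d\pi \circ \theta = J \circ d\pi$ is immediate from~\eqref{eq3.1}.

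To prove $\xi$ is Killing for $\wkappa$, I would split $\calL_\xi \wkappa = \calL_\xi(\pi^*\kappa) + \calL_\xi(\eta\otimes\eta)$ and test on basic horizontal lifts together with $\xi$ itself. Basic horizontal lifts are $S$-invariant, so $[\xi, X^\sharp] = 0$, and the function $\pi^*\kappa(X^\sharp, Y^\sharp) = \kappa(X,Y)\circ\pi$ is constant along fibres; hence $\calL_\xi(\pi^*\kappa) = 0$. $S$-invariance of $\eta$, built into the notion of principal connection, gives $\calL_\xi\eta = 0$, and therefore $\calL_\xi(\eta \otimes \eta) = 0$.

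The bulk of the work lies in the normality equivalence. Since $S$ is abelian, the curvature $2$-form of the connection is just $d\eta$; it is horizontal ($i_\xi d\eta = 0$ follows at once from $\calL_\xi\eta = 0$ and the constancy of $\eta(\xi)$) and $S$-invariant, so automatically $d\eta = \pi^*\Sigma$ for a unique $\Sigma \in \Omega^2(M,\R)$, without any further condition. The substance of the claim is that normality forces $\Sigma$ to be $J$-invariant. I would compute $N_\theta(X^\sharp, Y^\sharp)$ for pairs of basic horizontal lifts: using $\theta X^\sharp = (JX)^\sharp$, the identity $\theta[(JX)^\sharp, Y^\sharp] = (J[JX,Y])^\sharp$ (since $\theta$ annihilates the vertical part of the bracket), and $\theta^2[X^\sharp, Y^\sharp] = -[X,Y]^\sharp$, one finds that the horizontal part of $N_\theta(X^\sharp, Y^\sharp)$ equals $(N_J(X,Y))^\sharp$, while the vertical part is a linear combination of $d\eta(X^\sharp, Y^\sharp)$ and $d\eta((JX)^\sharp, (JY)^\sharp)$. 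Pairs of the form $(\xi, X^\sharp)$ contribute nothing, because $[\xi, X^\sharp] = 0$ and $\theta\xi = 0$. Adding $2\,d\eta\otimes\xi$ and separating components then gives, on the horizontal side, $N_J = 0$ (so $J$ is integrable by Newlander--Nirenberg) and, on the vertical side, precisely $\Sigma(JX,JY) = \Sigma(X,Y)$. The step I would be most careful about is the bookkeeping of the vertical contributions: tracking the precise coefficients in front of the two $d\eta$-terms requires close attention to the sign conventions for the Nijenhuis bracket and for the curvature of $\eta$, and an error here would yield a spurious normalisation relation rather than the clean $J$-invariance condition stated in the theorem.
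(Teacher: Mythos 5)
The paper offers no proof of Theorem~\ref{thm1.1}: it is imported from Hatakeyama, Morimoto and Ogiue, so your direct verification is an addition rather than an alternative to an argument in the text. Your route is the standard one and is essentially correct. The algebraic identities of an \ACMS and the relation $d\pi\circ\theta=J\circ d\pi$ do follow at once from \eqref{eq3.1} and the splitting $T\Mt=\Ker\eta\boplus\R\xi$; the Killing property follows, as you say, from $[\xi,X^\#]=0$ for basic lifts, the constancy of $\pi^*\kappa(X^\#,Y^\#)$ along the fibres, and $\calL_\xi\eta=0$; and you are right that $d\eta=\pi^*\Sigma$ is automatic for a principal connection on an $S^1$-bundle (the curvature form is horizontal and invariant), so the only content of the last assertion is $N_J=0$ plus the $J$-invariance of~$\Sigma$.

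The one step you flag but do not carry out --- the coefficients of the two $d\eta$-terms in the vertical component --- is genuinely where the proof can go wrong, so let me close it. Writing $[X^\#,Y^\#]=[X,Y]^\#+\eta([X^\#,Y^\#])\,\xi$, the only vertical contribution to $N_\theta(X^\#,Y^\#)$ comes from $[\theta X^\#,\theta Y^\#]$ and equals $\eta([(JX)^\#,(JY)^\#])\,\xi$; the term $\theta^2[X^\#,Y^\#]$ reduces to $-[X,Y]^\#$ and the two $\theta[\cdot,\cdot]$ terms are horizontal. Under Blair's convention $d\eta(U,V)=\tfrac12\bigl(U\eta(V)-V\eta(U)-\eta([U,V])\bigr)$ --- the convention under which the normality tensor is $N_\theta+2\,d\eta\otimes\xi$ --- this vertical part is $-2\,d\eta((JX)^\#,(JY)^\#)\,\xi$, and adding $2\,d\eta(X^\#,Y^\#)\,\xi$ gives exactly $\Sigma(JX,JY)=\Sigma(X,Y)$, as required. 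If instead you pair the factor $2$ in the normality tensor with the paper's other stated convention $d\eta(U,V)=-\eta([U,V])$, you land on the spurious relation $2\,d\eta(X^\#,Y^\#)=d\eta((JX)^\#,(JY)^\#)$, which upon replacing $(X,Y)$ by $(JX,JY)$ forces $d\eta=0$; with that convention the normality tensor must be written $N_\theta+d\eta\otimes\xi$. So your argument is complete and correct once the convention is fixed consistently, and the check that pairs $(\xi,X^\#)$ contribute nothing is also right, since $[\xi,X^\#]=0$, $\theta\xi=0$ and $i_\xi d\eta=\calL_\xi\eta=0$.
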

 
 The \kahler form $\Omega^{g_N}$ associated
 to an almost Hermitian metric $(g_N,J)$ on $N$ is defined by
\[
\Omega^{g_N}(X,Y) = g_N(JX,Y).
\]
Let $\sigma=\txeM$ be a \NACMS on $M$ as above (hence $J$ is
integrable). One says that $\sigma$ is Sasakian if
$d\eta= \pi^* \Omega^{g_N}$, cf.~\cite[Definitions~2.2
\&~4.2]{Ogi}, and that $\sigma$ is c-Sasakian if there exists
$c >0$ such that $(\theta,\xi,\eta, cg_M)$ is Sasakian.  The
following corollary is a combination of results proved
in \cite[\S 4]{Ogi}.

\begin{cor}
  \label{cor3.8}
  Let $\pi : M \sto N$ be a principal $\bbS^1$-bundle over an almost
  Hermitian manifold $(N,J,g_N)$. Assume that the \ACMS  $\txeM$
  defined in Theorem~\ref{thm1.1} is normal.  Then, the
  following assertions are equivalent.
  \begin{enumerate}[\rm (i)]
  \item $\txeM$ is a c-Sasakian structure;
    \item $(g_N,J)$ is a \kahler metric and there exists $c >0$
      such that $\Sigma = c\, \Omega^{g_N}$, where $d\eta=
      \pi^*\Sigma$ as in Theorem~\ref{thm1.1}. 
  \end{enumerate}
\end{cor}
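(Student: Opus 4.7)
The plan is to unwind the definitions and then exploit the rescaling identity $\Omega^{c\kappa}=c\,\Omega^\kappa$, which is immediate from $\Omega^\kappa(X,Y)=\kappa(JX,Y)$. Both implications are then essentially formal verifications; the substantive content is already in Theorem~\ref{thm1.1}.

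For (ii)$\Rightarrow$(i), I would first check that $(\theta,\xi,\eta,\widetilde{c\kappa})$ is still a \NACMS. By~\eqref{eq3.1} the field $\theta$ is defined using only the connection form $\eta$ and the almost complex structure $J$, and so is unaffected by rescaling the base metric; similarly $\xi$ and $\eta$ are unchanged. The rescaled metric $\widetilde{c\kappa}=\pi^*(c\kappa)+\eta\otimes\eta$ is compatible with $\theta$ because $(c\kappa,J)$ remains Hermitian, and the normality condition $N_\theta+2\,d\eta\otimes\xi=0$ involves only $\theta,\eta,\xi$, hence is preserved. It remains to verify the Sasakian condition $d\eta=\pi^*\Omega^{c\kappa}$; but $\Omega^{c\kappa}=c\,\Omega^\kappa$, so this reads $d\eta=c\,\pi^*\Omega^\kappa=\pi^*(c\,\Omega^\kappa)=\pi^*\Sigma$, which is exactly the relation provided by Theorem~\ref{thm1.1}.

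For (i)$\Rightarrow$(ii), the c-Sasakian hypothesis gives $d\eta=\pi^*\Omega^{c\kappa}=c\,\pi^*\Omega^\kappa$, while normality of $\txek$ and Theorem~\ref{thm1.1} supply a $J$-invariant $2$-form $\Sigma$ on $M$ with $d\eta=\pi^*\Sigma$. Since $\pi$ is a surjective submersion, $\pi^*$ is injective on forms on $M$, so comparing the two expressions for $d\eta$ yields $\Sigma=c\,\Omega^\kappa$. Applying $d$ to $d\eta=c\,\pi^*\Omega^\kappa$ gives $0=c\,\pi^*(d\Omega^\kappa)$, and injectivity of $\pi^*$ again forces $d\Omega^\kappa=0$. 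Combined with the integrability of $J$ (also guaranteed by normality via Theorem~\ref{thm1.1}), this shows that $(\kappa,J)$ is \kahler.

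There is no real obstacle: the only points requiring a little care are the independence of $\txe$ and of the normality condition from the choice of compatible base metric, and the injectivity of $\pi^*$ on $\Omega^\bullet(M)$. The genuine structural input, namely the uniqueness of $\Sigma$ with $d\eta=\pi^*\Sigma$, is already packaged into Theorem~\ref{thm1.1}, so the corollary reduces to the rescaling identity $\Omega^{c\kappa}=c\,\Omega^\kappa$ together with the observation that $d^2\eta=0$.
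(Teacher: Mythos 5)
Your proof is correct, but it diverges from the paper's on the direction (i)\,$\Rightarrow$\,(ii). The paper reduces to $c=1$, identifies $\Sigma=\Omega^\kappa$ via horizontal lifts (the same computation as your injectivity-of-$\pi^*$ argument), and then simply \emph{cites} Ogiue's corollary to conclude that $(\kappa,J)$ is \kahler; in the converse direction it phrases the verification as ``normal $+$ almost Sasakian $\Rightarrow$ Sasakian''. You instead keep general $c$ throughout via the rescaling identity $\Omega^{c\kappa}=c\,\Omega^\kappa$, and you obtain the \kahler property self-containedly: from $d\eta=c\,\pi^*\Omega^\kappa$ and $d^2\eta=0$ you get $\pi^*(d\Omega^\kappa)=0$, injectivity of $\pi^*$ (since $\pi$ is a surjective submersion) gives $d\Omega^\kappa=0$, and integrability of $J$ (from normality via Theorem~\ref{thm1.1}) plus closedness of the fundamental form is the standard characterisation of a \kahler metric. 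This buys you independence from the external reference at the cost of a couple of extra lines; the paper's route is shorter but leans on \cite[\S 4, Corollary]{Ogi}. Your checks in (ii)\,$\Rightarrow$\,(i) --- that $\theta$, $\xi$, $\eta$ and the normality condition are untouched by rescaling the base metric, and that compatibility of $\widetilde{c\kappa}$ persists --- are exactly the content the paper compresses into its reduction to $c=1$, so that direction is essentially the same argument.
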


\begin{proof} It suffices to give the proof for $c=1$.
  \\
  (i) $\imply$ (ii): If $\txeM$ is Sasakian, we know that
  $d\eta= \pi^*\Omega^{g_N}= \pi^*\Sigma$, from which it follows
  $\Omega^{g_N}= \Sigma$ since
  $\Omega^{g_N}(X,Y)= \pi^*\Omega^{g_N}(X^\psi,Y^\psi)=
  \pi^*\Sigma(X^\psi,Y^\psi)= \Sigma(X,Y)$, where we denote by
  $X^\psi$ the horizontal lift of~$X$. Moreover, $(g_N,J)$ is
  \kahler by \cite[\S 4, Corollary]{Ogi}.
  \\
  (ii) $\imply$ (i): 
  Since $\pi^*\Omega^{g_N}= \pi^*\Sigma= d\eta$, $\txeM$ is
  almost Sasakian, see ~\cite[Definitions~4.2]{Ogi}, and  as
  $\txeM$ is normal, it is Sasakian.
\end{proof}

 Recall from the introduction 
 that $J$ being a harmonic section on the manifold
 $(N,g_N,J)$ is equivalent to $[(\nabla^N)^*\nabla^N J,J]= 0$ and
 $J$ is a harmonic map if, moreover, the following equation
\[
\sum_{i=1}^{2n} g_N(\nabla^N_{F_i} J, R^N(F_i,X)J) = 0 \ \,
\text{for all $X \in \calX(N)$}
\]
is satisfied. Here, 
$(F_i)_{1 \le i \le 2n}$ is an orthonormal frame
on $N$.
 Observe that if $(g_N,J)$ is a \kahler metric, i.e.~$\nabla^N
 J=0$, one has  $[(\nabla^N)^*\nabla^N J,J]= 0$ and $\nabla^N_{F_i}
 J=0$, thus $J$ is a harmonic map.

 \begin{cor}
   \label{cor3.9}
   Under the previous notation, assume that $\delta \theta$  and
   $\nabla^*\nabla \xi$  are
   constant multiples of $\xi$. Then: 

   {\rm (1)}  $\sigma$ is a harmonic section \sissi $J$ is a
   harmonic section;

   {\rm (2)}  $\sigma$ is a harmonic map \sissi $J$ is a
   harmonic map;
   
   {\rm (3)} if $(g_N,J)$ is a \kahler metric, $\sigma$ is a
   harmonic map.
 \end{cor}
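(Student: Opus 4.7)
The plan is to apply Theorem~\ref{thm3.9} directly, reading the hypothesis $\delta\theta \in \R\xi$ as $\delta\theta = c\xi$ with $c \in \R$ a constant---this is the natural interpretation in the invariant setting in which the corollary is applied---and reading $\wnabla^*\wnabla \xi \in \R\xi$ as the assertion that $\xi$ is a harmonic unit vector field, so that part~(2) of that theorem becomes available.

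For~(1), the task is to check that equation~\eqref{eq3.7.1} is automatically satisfied. Equation~\eqref{eq5} of Corollary~\ref{corollary1.1} gives $\wnabla_\xi\xi = 0$, whence $\wnabla_{\delta\theta}\xi = c\,\wnabla_\xi\xi = 0$; and since $\xi$ is a unit vector field, $\wkappa(\delta\theta,\xi) = c$ is a constant function, so $\grad\wkappa(\delta\theta,\xi) = 0$. Both terms in~\eqref{eq3.7.1} vanish, and Theorem~\ref{thm3.9}~(2)(a) yields~(1). For~(2), the Reeb field $\xi$ is Killing for $\wkappa$ by Theorem~\ref{thm1.1}, so its flow $\varphi_t$ is a one-parameter family of isometries preserving $\xi$, hence preserving the Levi-Civita connection $\wnabla$ together with the tensor $\wnabla\xi$; the scalar $|\wnabla\xi|^2$ is therefore invariant along this flow, and differentiating at $t = 0$ gives $\xi(|\wnabla\xi|^2) = 0$. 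Combined with~(1), Theorem~\ref{thm3.9}~(2)(b) delivers~(2). Finally~(3) is immediate from~(2): a \kahler metric has $\nabla J = 0$, which makes $J$ tautologically both a harmonic section and a harmonic map.

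There is no real technical obstacle here; the corollary is essentially a dictionary entry translating the two proportionality hypotheses into the vanishing of the extra constraints of Theorem~\ref{thm3.9}. The single interpretive point worth emphasising is the reading of $\delta\theta \in \R\xi$: the computation collapses cleanly only when $c$ is a genuine constant, so that $\grad\wkappa(\delta\theta,\xi) = 0$ for free, which is exactly the feature provided by the homogeneous principal bundle framework that motivates this corollary.
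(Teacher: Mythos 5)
Your proposal is correct and follows the same overall route as the paper: both reduce the statement to Theorem~\ref{thm3.9}~(2), and your arguments for parts (1) and (3) coincide with the paper's (use $\wnabla_\xi\xi=0$ from~\eqref{eq5} to kill $\wnabla_{\delta\theta}\xi$, note that $\wkappa(\delta\theta,\xi)$ is a constant so its gradient vanishes, and in the \kahler case observe $\nabla J=0$ makes $J$ a harmonic map). The one step you handle differently is the verification of $\xi\bigl(|\wnabla\xi|^2\bigr)=0$ in part~(2): the paper invokes the cited fact (from~\cite{Str}) that $\wnabla^*\wnabla\xi=f\xi$ forces $f=|\wnabla\xi|^2$, so the hypothesis $\wnabla^*\wnabla\xi\in\R\xi$ makes $|\wnabla\xi|^2$ a constant function; you instead use that $\xi$ is Killing (Theorem~\ref{thm1.1}), so its flow consists of isometries preserving $\xi$, hence preserving $\wnabla$, $\wnabla\xi$ and the pointwise norm $|\wnabla\xi|^2$, which gives $\xi\bigl(|\wnabla\xi|^2\bigr)=0$ directly. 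Your variant is self-contained (no external reference) and uses only the Killing property for this step, whereas the proportionality of the rough Laplacian is still needed, as you correctly note, to place yourself in case~(2) of Theorem~\ref{thm3.9}; the paper's version yields the marginally stronger conclusion that $|\wnabla\xi|^2$ is constant. Your interpretive remark about reading $\delta\theta\in\R\xi$ as a \emph{constant} multiple of $\xi$ is exactly the paper's reading (its proof asserts that $\wkappa(\delta\theta,\xi)$ is constant), and it is what the $G$-invariance arguments of Lemma~\ref{lem2.2} supply in the applications.
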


 \begin{proof}
   By~\eqref{eq5} we have $\nabla_\xi \xi = 0$ (actually $\xi$
   is Killing vector field, cf.~Theorem~\ref{thm1.1}). Therefore
   $\nabla_{\delta \theta} \xi = 0$ and the function
   $g_M(\delta \theta,\xi)$ is constant, thus $ 2 \theta \nabla_{\delta \theta} \xi  + \theta
 \grad(g_M(\delta\theta,\xi)) =0$
   by Theorem~\ref{thm4.1} and (1) follows from
 Corollary~\ref{cor4.1}.
   \\
   It is known that if $\nabla^* \nabla \xi = f \xi$ for some
   function $f$, then $f= |\nabla{\xi}|^2$ (see, for example,
   \cite[Lemma~7.1, \& Proposition~8.1]{Str}). Thus
   $|\nabla{\xi}|^2$ is constant,
   $\xi\bigl(|\nabla{\xi}|^2\bigr) =0$ and
 Corollary~\ref{cor4.1}  yields~(2). Finally, (3) follows
   since $J$ is a harmonic map in the K\"ahlerian case.
    \end{proof}

In section~\ref{sec7} we will apply  Corollary~\ref{cor3.9} to  homogeneous principal
$\bbS^1$-bundles over generalised flag manifolds associated to a
compact semi-simple Lie group.  In the next section we fix the
notation used to described almost Hermitian structures on generalised flag manifolds.
    

    \section{Generalised flag manifolds} \label{sec6}

    Let $\roots$ be a
    reduced root system.  If we fix a basis $\broots$ of
    $\roots$, we denote by $\proots$ the set of positive roots.
    By \cite[Chapitre~8, \S~2 \& \S~4 Th\'eor\`eme~1]{Bou} one knows
    that there exists a split semi-simple real Lie algebra
    $\g_\R$ and a Cartan subalgebra $\tR$ of $\g_\R$ such that
    the root system of $\mathsf(\tR, \g_\R)$ is equal to
    $\roots$. We then have, for any choice of positive roots,
    $$\g_\R= \tR \boplus_{\alpha \in \proots} \bigl(\g_\R^{\alpha}
    \boplus \g_\R^{-\alpha} \bigr)$$
    where
    $$\g_\R^{\lambda} = \{v \in \g_\R : [h,v] = \lambda(h)v \ 
    \text{for all $h \in \tR$}\}$$ if $\lambda \in \roots$.
Set $\g_\C = \g_\R \otimes \C$, $\tC = \tR \otimes \C$,
$\groot{\lambda} = \g_\R^{\lambda} \otimes \C$, etc.,  then,
$\g_\C$ is a complex semi-simple Lie algebra 
We denote by $\GC$ a
connected complex semi-simple Lie group with Lie
algebra $\gC \ne 0$.

Denote by $\ascal{}{}$ a positive real scalar multiple of the
Killing form on $\gC$.  The restriction of $\ascal{}{}$ to $\tR$
defines a Euclidean scalar product on $\tR$,
see~\cite[Chapitre~8, \S~2, Remarque~2, p.~80]{Bou}.  Thus, for
each $\phi \in \mathfrak{t}^*_\R$ there exits a unique
$h_\phi \in \tR$ such that $ \ascal{h_\phi}{h} = \phi(h)$ for all
$h \in \tR$.  We have
$\tR= \boplus_{\beta \in \broots} \R h_\beta$ if $\broots$ is a
basis of $\roots$.  For each $\lambda \in \roots$ one can choose
a root vector $E_\lambda \in \grootR{\lambda}$ such that
$\grootR{\lambda} = \R E_\lambda$,
$[E_\lambda, E_{-\lambda}]= h_\lambda \in \tR$; hence,
$$[h_\lambda, E_{\mu}] = \mu(h_\lambda) E_\mu =
\ascal{h_\mu}{h_\lambda}E_\mu$$ for all $\mu \in \roots$.  One has
$\ascal{E_\lambda}{E_{-\lambda}} = 1$ and
$\ascal{E_\lambda}{E_{\mu}} = 0$ if $\mu \neq -\lambda$.

One defines a compact Cartan subalgebra by
\[
 \ft = i \tR = \bigoplus_{\beta \in \broots} \R ih_\beta.
\]
If $\fl \subset \ft$ is a subspace we set $\fl_\R= \{x \in \ft_\R
: ix \in \fl\}$, thus $\fl = i \fl_\R$.
Set, for $\lambda \in \roots$:
\begin{equation}
  \label{eq1.1}
  X_\lambda = \halfsqrt (E_\lambda - E_{-\lambda}),
  \quad Y_\lambda =\frac{i}{\sqrt{2}}
  (E_\lambda + E_{-\lambda}).
\end{equation}
Thus,  $[X_\lambda,Y_\lambda]  = ih_\lambda$ and  for all $H \in \tR$:
\begin{equation*}
  \label{eq1.2}
  [iH, X_\lambda] = \lambda(H) Y_\lambda, \quad   [iH,
  Y_\lambda] = -\lambda(H) X_\lambda. 
\end{equation*}
Notice that 
\begin{gather} \label{eq1.3a}
\ascal{X_\lambda}{Y_\lambda} =0, \ \; \ascal{X_\lambda}{X_\lambda} =
 \ascal{Y_\lambda}{Y_\lambda}= -1, \\
  \label{eq1.3}
 \ascal{Y_\lambda}{Y_\mu}= \ascal{X_\lambda}{X_\mu} =
  \ascal{X_\lambda}{Y_\mu}= 0 \ \; \text{for all $\lambda \neq
    \mu$ in $\proots$}.
\end{gather}
Set $\fm_\lambda =  \fm_{-\lambda} =\R X_\lambda \boplus \R
Y_\lambda$ and $\fn =
  \bigoplus_{\lambda \in \proots} \fm_\lambda$.
The $\ft$-modules $\fm_\lambda$ are pairwise non-isomorphic
irreducible $\ft$-modules. One has
$ [\ft, \fm_\lambda] \subset \fm_\lambda$,
$[\fm_\lambda,\fm_\lambda] = \R ih_\lambda$ and, if $\lambda
\neq \pm \mu$:
\begin{equation}
  \label{eq1.6a}
  [\fm_\lambda,\fm_\mu] \subset \fm_{\lambda+\mu} \boplus
  \fm_{\lambda-\mu}
 \end{equation}
 with the convention that $\fm_\nu = 0$ when $\nu \notin
 \roots$, see \cite[Lemma~9.2]{WG}.
Under this notation, the Lie subalgebra
\[
\g= \ft \boplus \fn
\]
is a compact real form of $\g_\C$.
We denote by $G$ and $T$ the closed connected Lie subgroups of
$G_\C$ with Lie algebras $\g$ and $\ft$. Hence, $G$ is compact
and $T$ is a maximal torus of~$G$.

One obtains a Euclidean scalar product on $\g$ by setting,
cf.~\cite[Chapitre~9, \S~1, p.~16]{Bou}:
\begin{equation*}
  \label{eq1.6}
  B(U,V)  = -  \ascal{U}{V} \ \; \text{for all $U,V \in \g$}.
\end{equation*}
If $\mathfrak{a} , \mathfrak{b},
\mathfrak{c}$ are subspaces  of $\g$ such that $\mathfrak{c} =
\mathfrak{a} \oplus \mathfrak{b}$ with $\mathfrak{a}$ orthogonal
to $\mathfrak{b}$ with respect to $B$ (i.e.~$\ascal{}{}$) we will write 
$\mathfrak{c} = \mathfrak{a} \obot \mathfrak{b}$.  In this case, we denote by
$x_{\mathfrak{a}}$ the orthogonal projection of $x \in \mathfrak{c}$
onto~$\mathfrak{a}$.

By~\eqref{eq1.3a}, $B(X_\lambda,X_\lambda) =
B(Y_\lambda,Y_\lambda) = 1$, $B(X_\lambda,Y_\lambda) =0$.
Therefore $\{X_\lambda,Y_\lambda\}$ is an orthonormal basis
(for $B$) of $\fm_\lambda$ and, by~\eqref{eq1.3},
$\bigl((X_\lambda,Y_\lambda)  \, ; \lambda \in \proots \bigr)$
is an orthonormal basis of $\fn$.  One verifies that
$\ascal{h_\alpha}{E_\lambda} = 0$ for all $\alpha \in \roots$, hence
$B(h_\alpha,X_\lambda) = B(h_\alpha,Y_\lambda) =0$ and
$B(\ft,\fn) =B(\fm_\lambda,\fm_\mu) =0$, if
$\lambda \neq \pm \mu$.

A \emph{generalised flag manifold} is a homogeneous space
$G/K$ where $K$ is the centraliser of a torus $C \subset G$;
we may assume that $C \subset T$ is the connected component of
the centre of $K$. When $C=T$, we obtain the full flag
manifold~$G/T$.  The group $K$ is compact and connected; we set
$\fk = \Lie(K)$, which is a reductive Lie algebra.  We mostly
follow \cite{Arv, WG} for notation and results on generalised
flag manifolds and we will assume that $C$ is non-trivial, i.e.~$K \ne G$.

Let $\fz =\Lie(C)$ be the centre of $\fk$. Since
$\ft \subset \fk \subset \g$ one can find a root subsystem
$\rootsP \subset \roots \subset \ft_\R^*$ such that,
setting $\fn^\rootsP =  {\sum_{\beta \in \rootsP}}
  \fm_\beta$ and $\ft^\rootsP =\sum_{\lambda \in \rootsP} \R
  \, ih_\lambda$, one has $\ft = \fz \boplus \ft^{\rootsP} \subset
  \fk = \ft \boplus
  \fn^\rootsP$.
Set:
\begin{equation*} \label{eq6.2}
  \rootsQ= \roots \sminus \rootsP, \qquad \fm = \sum_{\alpha \in
    \rootsQ} \fm_\alpha.
\end{equation*}
Then
$B(\fk,\fm) = 0$ and $\g= \fk \obot \fm$ is the
reductive decomposition of $\g$ associated to $K$, i.e.~to the
homogeneous space $G/K$.
\\
 Let $\varpi : \mathfrak{t}_\R^* \to \mathfrak{z}_\R^*$ be
the restriction map. The elements of  $R_T =
\varpi(\roots) = \varpi(\rootsQ)$ are called $T$-roots and 
parameterise the irreducible summands of
the $K$-module $\fm$ as follows, cf.~\cite[Theorem~7.3]{Arv}. If
$\gamma \in R_T \subset \fz_\R^*$, set:
\begin{equation} \label{eq6.4b}
  \varpi^{-1}(\gamma) = \{\alpha \in \rootsQ \, : \, \alpha_{\mid
  \fz} = \gamma\} \subset \rootsQ.
\end{equation}
The $\fk_\C$-module $\fm_\C = \fm \otimes \C$ decomposes as the
sum of the pairwise non-isomorphic irreducible modules
$\fm_\gamma^\C = \sum_{\alpha \in \vpi^{-1}(\gamma)} \C
E_\alpha$,
i.e.~$\fm_\C = \bigoplus_{\gamma \in R_T} \fm_\gamma^\C$.
Choose a set of positive roots
$\proots$ and define for $\gamma \in R_T$:
\begin{equation*}
  \label{eq6.4a}
  \fm_\gamma = \fm_{-\gamma} = (\fm_\gamma^\C \boplus \fm_{-\gamma}^\C)
  \cap \g = \bigoplus_{\alpha \in \varpi^{-1}(\gamma) \cap
    \proots} \fm_\alpha.
\end{equation*}
Then, the  $\fm_\gamma$  are
nonequivalent irreducible $K$-modules. In order to write $\sum_\gamma\fm_\gamma$ as a direct sum,  for each
class in $R_T/\{\pm 1\}$ we choose a representative element
$\gamma \in R_T$ and we denote by $R_T^+$ the set of these
representative elements. Observe that
\[
\rootsQ \cap \proots= \bigsqcup_{\gamma \in R_T^+}
(\vpi^{-1}(\gamma) \cap \proots), \quad \fm=  \bigoplus_{\gamma \in R_T^+} \fm_\gamma = \bigoplus_{\alpha
  \in \rootsQ \cap \proots} \fm_\alpha.
\]
Notice that  $\{(X_\alpha,Y_\alpha) \, : \, {\alpha \in
  \varpi^{-1}(\gamma) \cap \proots}\}$ is an orthonormal basis (for $B$) of
$\fm_\gamma$ and  that $B(\fm_\gamma,\fm_{\gamma'})= 0$ if $\gamma \ne
\gamma'$. 

We say that $J \in \End(\fm)$ is an invariant complex structure
on $\fm$ if $J^2 = -\id$ and $\Ad(k)J = J$ for all $k \in K$. We
extend $J$ to $\g$ by $J=0$ on $\fk$.  Recall that if $z \in \g$ one
defines the Kostant-Kirillov-Souriau (KKS for short) form
$\omega_{z}$ by
\begin{equation*} \label{KKZ}
\omega_z(A,A') = B(z,[A,A']) \ \; \text{for all $A,A' \in \g$.}
\end{equation*}
Notice that if $v \in \tR$ one has
$\omega_{iv}(X_\lambda,Y_\lambda) = \lambda(v)$ since
$B(iv,ih_\lambda) = \ascal{v}{h_\lambda} = \lambda(v)$.  The
proof of the next classical proposition is left to the reader.

\begin{prop}
  \label{prop6.6}
  Let $J$ be an invariant complex structure on $\fm$ and
  $z \in \fz$.
  \\
  {\rm (1)} For all $\gamma \in
  R_T$, there exists $\epsilon(\gamma)= \pm
  1$ such that if $\lambda \in \vpi^{-1}(\gamma)$ one has
  $J X_\lambda = \epsilon(\gamma) Y_\lambda$ and
  $J Y_\lambda= - \epsilon(\gamma) X _\lambda$.
  \\
  {\rm (2)} The form $\omega_{z}$ is $J
  $-invariant and $K$-invariant, i.e.,
\[
\text{$\omega_{z}(JA,JA') =
  \omega_{z}(A,A') = \omega_z(\Ad(k)A,\Ad(k)A')$ for all $A,A' \in
  \g$ and $k \in K$}. 
\]
Moreover, $\omega_z(A,A')= B(z,[A,A']_\ft) =
  B(z,[A,A']_\fz)$ and
\[
\omega_z(A,A') = \omega_z(A_\fm,A'_\fm) 
=\omega_z(JA_\fm,JA'_\fm).  
\]
{\rm (3)} Suppose that $z'  \in
  \fz$ and $\omega_z= \omega_{z'}$
  on $\fm \times \fm$, then $z=z'$.
 \end{prop}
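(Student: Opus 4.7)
The strategy is to work block by block on the decomposition $\fm = \bigoplus_{\alpha \in \rootsQ \cap \proots} \fm_\alpha$, exploiting the orthonormality of the basis $(X_\alpha,Y_\alpha)_\alpha$ and the fact that the $T$-action on each plane $\fm_\alpha$ is a non-trivial rotation of angular speed $\alpha$.

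For part~(1), I would first note that since the real $K$-modules $\fm_\gamma$, $\gamma \in R_T^+$, are pairwise non-isomorphic, the $K$-equivariant endomorphism $J$ must preserve each~$\fm_\gamma$. Extending $J$ $\C$-linearly to $\fm \otimes_\R \C$ produces a $\fk_\C$-equivariant endomorphism whose $\pm i$-eigenspaces are $\fk_\C$-submodules of $\fm_\gamma \otimes_\R \C = \fm_\gamma^\C \boplus \fm_{-\gamma}^\C$. Since $\fm_\gamma^\C$ and $\fm_{-\gamma}^\C$ are simple non-isomorphic $\fk_\C$-modules (their central characters on $\fz$ are $\gamma$ and $-\gamma \ne \gamma$), the $+i$-eigenspace must coincide with exactly one of them; this singles out a uniform sign $\varepsilon(\gamma) \in \{\pm 1\}$ valid for the whole family $\varpi^{-1}(\gamma)$. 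Reading off $J$ on the basis via $E_\alpha = \halfsqrt(X_\alpha - iY_\alpha)$ then yields $JX_\alpha = \varepsilon(\gamma) Y_\alpha$ and $JY_\alpha = -\varepsilon(\gamma) X_\alpha$ as required.

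For part~(2), I plan to handle each identity separately. The $K$-invariance is a direct combination of $\Ad(k)$-invariance of $\ascal{\cdot}{\cdot}$ and the fact that $z$, being in the centre of $\fk$, is fixed by $\Ad(K)$. The orthogonal decompositions $\ft = \fz \perp \ft^\rootsP$ and $\g = \ft \perp \fn$ give $B(z,\fn) = B(z,\ft^\rootsP) = 0$, so only the $\fz$-component of $[A,B]$ contributes, yielding $\omega_z(A,B) = B(z,[A,B]_\ft) = B(z,[A,B]_\fz)$. The reduction $\omega_z(A,B) = \omega_z(A_\fm, B_\fm)$ follows from the $\ad$-invariance identity $B(z,[A,B]) = B([z,A],B)$ together with $[z,\fk] = 0$, which kills any contribution when either argument lies in~$\fk$. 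Finally, the $J$-invariance reduces to evaluation on pairs of basis vectors of $\fm$: for $\alpha \ne \beta$ the brackets $[X_\alpha,X_\beta]$, $[X_\alpha,Y_\beta]$, $[Y_\alpha,Y_\beta]$ all lie in $\fn$ by~\eqref{eq1.6a} and are therefore $B$-orthogonal to~$z$, so only the diagonal terms $\omega_z(X_\alpha,Y_\alpha) = B(z,ih_\alpha)$ survive; on each such $2$-dimensional block, $J$ acts as $\pm$ the rotation by $\pi/2$ by part~(1), which plainly preserves a skew form.

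For part~(3), specialising $\omega_z = \omega_{z'}$ to the diagonal pairs $(X_\alpha,Y_\alpha)$ gives $B(z-z',ih_\alpha) = 0$ for every $\alpha \in \rootsQ$. Since $z-z' \in \fz$ is already $B$-orthogonal to $\ft^\rootsP = \sum_{\alpha \in \rootsP} \R\, ih_\alpha$ by construction, this orthogonality extends to the whole root system $\roots$; as $\{ih_\alpha\}_{\alpha \in \roots}$ spans~$\ft$ by semisimplicity, non-degeneracy of $B$ on $\ft$ forces $z = z'$. The main subtlety I anticipate lies in part~(1), namely extracting a \emph{uniform} sign $\varepsilon(\gamma)$ across the entire family $\varpi^{-1}(\gamma)$; bypassing a direct $K$-orbit analysis in favour of the eigenspace decomposition of $J \otimes \C$ should deliver this constancy automatically, provided the non-isomorphism of $\fm_\gamma^\C$ and $\fm_{-\gamma}^\C$ as $\fk_\C$-modules is invoked.
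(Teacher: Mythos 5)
The paper gives no proof of Proposition~\ref{prop6.6} (it is explicitly left to the reader), so there is nothing internal to compare your argument against; judged on its own, your proof is correct and complete. In particular, the key point in part~(1) — that the $\pm i$-eigenspaces of $J \otimes \C$ are $\fk_\C$-submodules of $\fm_\gamma^\C \boplus \fm_{-\gamma}^\C$ and hence each must coincide with one of the two non-isomorphic simple summands, which forces the sign $\epsilon(\gamma)$ to be uniform over $\vpi^{-1}(\gamma)$ — is exactly the standard argument, and parts~(2) and~(3) follow correctly from the $\ad$-invariance of $B$, the orthogonality relations $B(\ft,\fn)=0$ and $B(\fz,\ft^{\rootsP})=0$, and the fact that the $ih_\alpha$, $\alpha \in \roots$, span $\ft$.
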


The next two theorems summarise results proved in
 \cite[Propositions~7.5, 7.6, 7.7]{Arv} and \cite[Theorem~4.7,
 Proposition~9.3, Theorem~9.4]{WG}.  Let $e \in G$ be the identity
 and $\pzero = eK \in G/K$; recall that $T_\pzero (G/K)$ identifies
 with~$\fm$. 

\begin{thm}
  \label{thm6.7}
  {\rm (1)}
  The set of $G$-invariant almost complex structures $J$ on $G/K$
  is in bijection with the set of $ \Ad(K)$-invariant complex
  structure $J_\fm$ on $\fm$ or, equivalently, with the set of
  signs $\{\epsilon(\gamma) = \pm 1 : \gamma \in R_T^+\}$. The
  structure $J_\fm$ is defined by
  $J_\fm X_\lambda = \epsilon(\gamma) Y_\lambda$,
  $J_\fm Y_\lambda= - \epsilon(\gamma) X_\lambda$ for all
  $\lambda \in \varpi^{-1}(\gamma)\cap \proots$.
  \\
  {\rm (2)} Any $G$-invariant metric $g_{\scriptscriptstyle{G/K}}$ on $G/K$ is
  compatible with any invariant almost complex structure. It is determined
  by an $\Ad(K)$-invariant scalar product $g_\fm$ on $\fm$
  defined on each $\fm_\gamma$, $\gamma \in R_T^+$, by $g_{\fm_\gamma} =
  \kappa_\gamma B$ for some  $\kappa_\gamma >0$, hence,  $g_{\fm} (X_\lambda,X_\lambda) = g_{\fm} (Y_\lambda,Y_\lambda)
=\kappa_\gamma$  for all $\lambda  \in  \varpi^{-1}(\gamma)
  \cap \proots$.
\end{thm}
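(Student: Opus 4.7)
The plan is to reduce the classification of $G$-invariant structures on $M=G/K$ to that of $\Ad(K)$-invariant structures on $\fm \cong T_{\pzero}M$, and then exploit the already-quoted $K$-module decomposition $\fm = \bigoplus_{\gamma \in R_T^+} \fm_\gamma$ into pairwise non-equivalent irreducibles (from \cite[Theorem~7.3]{Arv}) through Schur's lemma.

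For part~(1), a $G$-invariant almost complex structure on $M$ corresponds bijectively to an $\Ad(K)$-invariant $J_\fm \in \End(\fm)$ with $J_\fm^2 = -\id$. Schur's lemma applied to the decomposition into non-isomorphic irreducibles forces $J_\fm = \bigoplus_\gamma J_\gamma$ with $J_\gamma \in \End_K(\fm_\gamma)$. To classify $J_\gamma$ I would complexify: one has $\fm_\gamma \otimes_\R \C = \fm_\gamma^\C \boplus \fm_{-\gamma}^\C$, a sum of two pairwise non-isomorphic irreducible complex $\fk_\C$-modules, so complex Schur yields that a $\fk_\C$-linear endomorphism acts as scalars $(a,b) \in \C \times \C$ on the two summands. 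Compatibility with the real structure induced by the conjugation $\sigma$ (which sends $E_\alpha \mto -E_{-\alpha}$ by~\eqref{eq1.1} and hence swaps $\fm_\gamma^\C$ with $\fm_{-\gamma}^\C$) forces $b = \bar a$, and $J_\gamma^2 = -\id$ then gives $a = \pm i$. Translating via~\eqref{eq1.1} back to the real basis $\{X_\lambda, Y_\lambda\}$ for $\lambda \in \varpi^{-1}(\gamma)\cap \proots$, the two scalars $a = \pm i$ match the two choices $\epsilon(\gamma) = \pm 1$ with $J_\fm X_\lambda = \epsilon(\gamma) Y_\lambda$ and $J_\fm Y_\lambda = -\epsilon(\gamma) X_\lambda$.

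For part~(2), a $G$-invariant Riemannian metric corresponds to an $\Ad(K)$-invariant positive-definite symmetric bilinear form $\kappa_\fm$ on $\fm$. Since $B(\fm_\gamma,\fm_{\gamma'}) = 0$ for $\gamma \ne \gamma'$ and the $\fm_\gamma$ are pairwise non-isomorphic, the Schur argument again forces $\kappa_\fm = \bigoplus_\gamma \kappa_{\fm_\gamma}$. The restriction $B|_{\fm_\gamma}$ is itself $\Ad(K)$-invariant and positive-definite (since $\g$ is compact), so one can write $\kappa_{\fm_\gamma}(X,Y) = B(A_\gamma X, Y)$ with $A_\gamma \in \End_K(\fm_\gamma)$. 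The same complex-Schur analysis from part~(1) identifies $A_\gamma$ with a scalar $a \in \C$; symmetry of $\kappa_{\fm_\gamma}$ forces $a$ to be real, and positive-definiteness forces $a = \kappa_\gamma > 0$. The explicit values $\kappa_\fm(X_\lambda,X_\lambda) = \kappa_\fm(Y_\lambda,Y_\lambda) = \kappa_\gamma$ then follow from~\eqref{eq1.3a}. Finally, compatibility of $\kappa_\fm$ with any invariant $J_\fm$ reduces, on each block, to checking that $B(J_\fm X_\lambda, J_\fm X_\lambda) = B(Y_\lambda,Y_\lambda) = B(X_\lambda,X_\lambda)$ and the analogous identities, which are immediate from~\eqref{eq1.3a}; across distinct blocks $\fm_\gamma, \fm_{\gamma'}$ both sides vanish.

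The one real technical point is the bookkeeping around the real form $\sigma$ and the matching between the complex Schur decomposition of $\fm_\gamma \otimes \C$ and the orthonormal real basis $\{X_\lambda, Y_\lambda\}$; I expect that once the sign conventions from~\eqref{eq1.1}--\eqref{eq1.3} are handled carefully, the remainder of the argument is routine application of Schur's lemma to the already-established irreducible decomposition of $\fm$.
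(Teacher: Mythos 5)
Your argument is correct, and it is worth noting that the paper itself offers no proof of Theorem~\ref{thm6.7}: it explicitly presents the two theorems of Section~\ref{sec6} as a summary of results from \cite[Propositions~7.5--7.7]{Arv} and \cite[Theorem~4.7, Proposition~9.3, Theorem~9.4]{WG}. What you have written is essentially the standard argument underlying those references, carried out in the paper's own notation: the reduction of $G$-invariant tensors to $\Ad(K)$-invariant tensors on $\fm$, Schur's lemma applied to the decomposition $\fm = \bigoplus_{\gamma \in R_T^+} \fm_\gamma$ into pairwise inequivalent irreducibles, and the identification $\End_K(\fm_\gamma) \cong \C$ via the complexification $\fm_\gamma \otimes_\R \C = \fm_\gamma^\C \boplus \fm_{-\gamma}^\C$ into two inequivalent irreducibles swapped by the conjugation. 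Your sign bookkeeping checks out: the compact real form fixes $X_\lambda, Y_\lambda$, which by~\eqref{eq1.1} forces $\sigma(E_\lambda) = -E_{-\lambda}$, and the eigenvalue $a = \pm i$ on $\fm_\gamma^\C$ does translate to $J_\fm X_\lambda = \epsilon(\gamma) Y_\lambda$, $J_\fm Y_\lambda = -\epsilon(\gamma) X_\lambda$. In part~(2), the passage from $A_\gamma \in \End_K(\fm_\gamma) \cong \C$ to a real positive scalar deserves one explicit line — writing $A_\gamma = x\,\id + y\,J_\gamma$ and using that $J_\gamma$ is $B$-skew (immediate from~\eqref{eq1.3a}) to kill $y$ by symmetry of $\kappa_{\fm_\gamma}$ — but this is routine and your sketch already points at it. No gap.
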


Set
$\fz_{\reg} = \fz_\R \sminus \bigl(\bigcup_{\gamma \in R_T}
\Ker(\gamma)\bigr)$ and define a Weyl chamber to be a connected
component of $\fz_\reg$.  The invariant orderings $\prootsQ$, as
defined in \cite[Chapter~7, \S 7]{Arv}, are in bijection with the
Weyl chambers in $\fz_\R$.  If $\mathsf{C}$ is a Weyl chamber,
pick $z \in \mathsf{C}$; then
$\prootsQ= \{\alpha \in \rootsQ \, : \, \alpha(z) >0\}$ gives an
invariant ordering. From $\prootsQ$ one deduces a set of positive
roots $\proots \supset \prootsQ = \proots \cap \rootsQ$. When an
invariant ordering is fixed, we will choose $\proots$ in this
way. Then, one may take $R_T^+=\varpi(\prootsQ)$ and if
$\gamma \in R_T^+$ one has, cf.~\eqref{eq6.4b} :
\[
  \varpi^{-1}(\gamma)= \varpi^{-1} (\gamma)\cap  \proots = \{\lambda \in \prootsQ : \varpi(\lambda) =
  \gamma\}.
\]
The $\Ad(K)$-invariant complex structure $J_\fm$ associated
to the invariant ordering is given by choosing the signs
$\epsilon(\gamma) = +1$ for $\gamma \in R_T^+$, hence:
\begin{equation} \label{Qstructure}
J_\fm X_\lambda= Y_\lambda, \quad J_\fm Y_\lambda = - X_\lambda, \quad
\text{for all $\lambda \in \prootsQ$}.
\end{equation}


\begin{thm}
  \label{thm6.8}
  Retain the notation of Theorem~\ref{thm6.7}.
  \\
  {\rm (1)} There is a  bijection  between the following sets:
  \begin{enumerate}[{\rm (i)}]
  \item $G$-invariant complex structures on $G/K$;
  \item $\Ad(K)$-invariant complex structures $J_\fm$ on $\fm$ as
    in~\eqref{Qstructure} defined by a choice of an invariant
    ordering $\prootsQ$, i.e.~a Weyl chamber $\Csf$.
 \end{enumerate}
{\rm (2)} Let $J$ be an invariant complex structure on $G/K$ and $g_{\scriptscriptstyle{G/K}}$
be a $G$-invariant metric on $G/K$.  The following are equivalent:
\begin{enumerate}[{\rm (i)}]
\item $(g_{\scriptscriptstyle{G/K}},J)$ is \kahler;
  \item for all $\gamma, \gamma' \in R_T^+=
\varpi(\prootsQ)$ such that $\gamma + \gamma'
    \in R_T^+$, one has $\kappa_\gamma + \kappa_{\gamma'} =
    \kappa_{\gamma + \gamma'}$;
    \item the \kahler form $\Omega_\fm(U,V)= g_{\fm}(J_\fm U,V)$
      is equal to the KKS form ${\omega_{ih}}_{\mid \fm\times \fm}$ for some $h$ in the
      Weyl chamber defined by $\prootsQ$.
      \end{enumerate}
      {\rm (3)} The metric $g_{\scriptscriptstyle{G/K}}$ on $G/K$ is \kahler-Einstein
      \sissi $\Omega_\fm = c {\omega_{ih_\rho}}$ where $\rho =
      \half \sum_{\alpha \in \prootsQ} \alpha$ and $c$ is
      a positive real constant.
\end{thm}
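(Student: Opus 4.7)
These three assertions are classical; my plan follows the treatments in \cite{Arv, WG}, organised into three steps, one per part of the theorem.

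For part (1), I begin with the identification of $G$-invariant tensor fields on $G/K$ with $\Ad(K)$-invariant tensors on $\fm \cong T_\pzero M$. Thus an invariant almost complex structure is an $\Ad(K)$-invariant $J_\fm \in \End \fm$ squaring to $-\id$, and by Proposition~\ref{prop6.6}(1) such a $J_\fm$ is given on each irreducible summand $\fm_\gamma$ by a sign $\epsilon(\gamma) \in \{\pm 1\}$. Integrability amounts to vanishing of the Nijenhuis tensor of $J_\fm$, which after complexification becomes the statement that the $+i$-eigenspace of $J_\fm$ in $\fm_\C$ is a Lie subalgebra. Using $[\fm_\alpha^\C, \fm_\beta^\C] \subset \fm_{\alpha+\beta}^\C$ (with the convention $\fm_\nu^\C = 0$ if $\nu \notin \roots$), this closure condition says exactly that the subset $\prootsQ \subset \rootsQ$ of roots contributing to the $+i$-eigenspace decomposes $\rootsQ$ as $\prootsQ \sqcup (-\prootsQ)$ and is closed under addition within $\rootsQ$; by \cite[Chap.~7]{Arv} such invariant orderings correspond bijectively to Weyl chambers of $\fz_\reg$.

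For part (2), the implication (iii)$\Rightarrow$(i) is immediate: the KKS form $\omega_{ih}$ is closed on $\g$ by a Jacobi identity computation, hence so is its restriction to $\fm \times \fm$, and Proposition~\ref{prop6.6}(2) supplies the required invariance properties. For (i)$\Rightarrow$(ii), I would expand $d\Omega_\fm$ using the Chevalley--Eilenberg formula
\[
d\Omega_\fm(U,V,W) = -\Omega_\fm([U,V]_\fm, W) -\Omega_\fm([V,W]_\fm, U)-\Omega_\fm([W,U]_\fm, V),
\]
evaluate on triples in $\fm_\alpha \times \fm_\beta \times \fm_{\alpha+\beta}$ (essentially the only triples producing nonzero terms by~\eqref{eq1.6a}), and use $\Omega_\fm(X_\lambda, Y_\lambda) = \epsilon(\vpi(\lambda))\kappa_{\vpi(\lambda)}$. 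After cyclic summation, the $\fm_{\alpha-\beta}$ contributions from~\eqref{eq1.6a} cancel against the sign pattern imposed by the invariant ordering, and $d\Omega_\fm = 0$ reduces to the linear relation $\kappa_\gamma + \kappa_{\gamma'} = \kappa_{\gamma+\gamma'}$ whenever $\gamma, \gamma', \gamma+\gamma' \in R_T^+$. For (ii)$\Rightarrow$(iii), the map $\gamma \mapsto \epsilon(\gamma)\kappa_\gamma$ extends additively from $R_T^+$ to a linear form on $\fz_\R$, represented via the scalar product by some $ih \in \fz$; positivity of the $\kappa_\gamma$'s places $h$ in the Weyl chamber $\Csf$ associated to $\prootsQ$, and $\omega_{ih}(X_\alpha, Y_\alpha) = \alpha(h) = \gamma(h)$ matches $\Omega_\fm(X_\alpha, Y_\alpha)$ by construction.

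For part (3), I would invoke the classical fact that on a generalised flag manifold the Ricci form of \emph{any} invariant Kähler metric equals $\omega_{ih_{2\rho_\fm}}|_{\fm\times\fm}$ (see \cite[Theorem~9.4]{WG}), where $2\rho_\fm = \sum_{\alpha\in\prootsQ}\alpha$. The Kähler--Einstein condition is proportionality of the Ricci form and the Kähler form, which combined with (iii) gives $\Omega_\fm = c\,\omega_{ih_\rho}$ with $\rho = \half\sum_{\alpha\in\prootsQ}\alpha$ and $c>0$; uniqueness of $h$ follows from Proposition~\ref{prop6.6}(3). The main obstacle is the Chevalley--Eilenberg bookkeeping in part~(2): tracking the real structure constants of $[X_\alpha, X_\beta]$, $[X_\alpha, Y_\beta]$ and $[Y_\alpha, Y_\beta]$ and verifying that the $\fm_{\alpha-\beta}$ pieces of \eqref{eq1.6a} cancel across the cyclic sum once the signs $\epsilon(\gamma)$ are inserted. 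This is routine once normalisations of the $E_\alpha$ are fixed, and is carried out in detail in \cite[\S 9]{WG}.
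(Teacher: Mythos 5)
The paper does not prove this theorem at all: it is explicitly presented as a summary of classical results, with the proofs delegated to \cite[Propositions~7.5, 7.6, 7.7]{Arv} and \cite[Theorem~4.7, Proposition~9.3, Theorem~9.4]{WG}. Your sketch is therefore not an alternative to an in-paper argument but a reconstruction of the standard proofs from those very references, and as such it is essentially correct: the eigenspace/closure-under-addition characterisation of integrability for part (1), the Chevalley--Eilenberg computation of $d\Omega_\fm$ on root-vector triples for (i)$\Leftrightarrow$(ii), the additive extension defining $h_\kappa$ for (ii)$\Rightarrow$(iii), closedness of the KKS form for (iii)$\Rightarrow$(i), and the metric-independence of the invariant Ricci form for part (3) (your $\omega_{ih_{2\rho_\fm}}$ versus the theorem's $c\,\omega_{ih_\rho}$ differ only by a factor absorbed into $c$). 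Two small points deserve tightening. First, in part (1) the integrability condition requires closure of $\prootsQ$ not only under addition within $\rootsQ$ but also under addition of roots from $\rootsP$; this second condition is automatic from $\Ad(K)$-invariance of $J_\fm$ (the sign $\epsilon$ is constant on each fibre $\vpi^{-1}(\gamma)$), but it should be said, since it is what guarantees that $\prootsQ$ is an invariant ordering in the sense of \cite{Arv} and hence corresponds to a Weyl chamber. Second, in (ii)$\Rightarrow$(iii) one should justify that the additivity relations are the only linear relations obstructing the extension of $\gamma \mapsto \kappa_\gamma$ to a linear form on $\fz_\R^*$, i.e.~that a well-defined $h \in \fz_\R$ exists; this is the content of \cite[Proposition~7.7]{Arv} and is worth an explicit citation rather than an appeal to ``routine bookkeeping''.
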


The list of generalised flag manifolds for classical groups can
be found in \cite[Chapter~7, \S 4, p.~100]{Arv}. \\
Examples illustrating the previous definitions and results are
given in \cite[Chapter~7,  \S\S~5 \& 8, pp.~102-103 \&
109-111]{Arv} for the full flag manifold $\SU(3)/T$ and the
generalised flag manifold $\SO(8)/(\Unit(2) \times \Unit(2))$.
(See also the example for  $\SU(3)/T$ given after Proposition~\ref{prop5.3}.)
Harmonic maps between generalised flag manifolds are studied in~\cite{Guest}.

\section{Homogeneous principal bundles over a generalised flag manifold}
\label{sec7}

Let $M=G/K$, $K= Z_G(C)$, be a generalised flag manifold  as in
Section~\ref{sec6}. 
Choose a sub-torus $\widetilde{C} \subset C$ of codimension
one. Set $\wz = \Lie(\widetilde{C})$ and pick a unit vector
$X_0 \in \fz$, i.e.~$B(X_0,X_0) =1$, such that $B(X_0,\wz) =0$,
hence $\fz= \R X_0 \obot \wz$. Then, if
$\wk= \wz \boplus [\fk,\fk]$ we get $\fk= \R X_0 \obot \wk$
and $\wk$ is the orthogonal of $X_0$ in $\fk$.
  One has $Y_0 = -i X_0 \in \fz_\R$
  and $ \wz  = \fz \cap
    \wk$.
  The algebra $\wk$ is reductive and
  $\wt = \wz \obot \ft^{\rootsP}$ is a Cartan subalgebra of
  $\wk$. 
  Let $\wK$ be the
  closed connected subgroup of $G$ such that $\Lie(\wK) = \wk$, 
   then $K/\wK \cong  C/\widetilde{C} \cong \bbS^1$ and 
we obtain a $G$-homogeneous principal $\bbS^1$-bundle:
  \begin{equation*}
    \label{eq.bundle}
    \pi :   M=G/\wK \sto N=G/K.
  \end{equation*}
  From $\g = \fk \boplus \fm = \wk \boplus \R X_0 \boplus \fm$ one
deduces that the $\wK$-invariant  decomposition associated to the
reductive homogeneous space $G/\wK$ is:
\[
  \g= \wk \obot \wm \ \; \text{with $\wm = \R X_0 \obot \fm$
    (hence $[\wk,\wm] \subset \wm$)}.
\]
Set $\wpzero= e\wK$. One can identify $\wm$ with $T_\wpzero (G/\Kt)$
and the differential
$d\pi(\wpzero) : T_\wpzero (G/\Kt) \sto T_\pzero (G/K)$  with
the orthogonal projection $\wm \sto \fm$. Recall that there is a
bijection between $G$-invariant tensors on $G/\Kt$ and
$\Ad(\wK)$-invariant tensors on $\wm$, cf.~\cite{CS}. We will
emphasise this bijection in a few cases which enable us to
construct invariant structures on~$G/\Kt$.

Let $g \in G$, we denote by $\ltr_g : G/\Kt \to G/\Kt$, or
$\ltr_g : G/K \to G/K$, the left translation by $g$,
i.e.~$x \mto gx$. If $\calO \subset G/\Kt$ is open and
$Z \in \calX(\calO)$, the vector field
$g_*Z \in \calX(g\calO)$ is defined by
\[
g_*Z_{gq} = d\ltr_g(q). Z_q \ \; \text{ for all $q \in \calO$.}
\]
A vector field  $V \in \calX(G/\Kt)$ is said to be left-invariant 
if  $g_* V= V$ for all $g \in G$, that is $V_{gq} =
d\ltr_g(q).V_q$ for all $g \in G$ and $q \in G/\Kt$.
We denote by $\calX(G/\Kt)^G$ the vector space of left-invariant
vector fields.
Suppose that $V \in \wm$ centralises $\wk$.  The one-parameter
subgroup $\exp(\R V)$ acts by right translation on $G/\Kt$,
and one defines the vector field $V ^\#$ on $G/\Kt$ by:
\begin{equation*}
  \label{34}
  V ^\#_q= \frac{d}{dt}_{\big| t= 0} \left(qe^{tV} \right) \ \;
  \text{for $q \in G/\Kt$.}
\end{equation*}
It is easy to see that $V^\#$ is left-invariant.  The next 
(well-known) lemma recalls that we can obtain $\calX(G/\Kt)^G$
through that construction.

\begin{lem}
  \label{lem2.1}
  There exists a linear bijection $f$ between  $\calX(G/\Kt)^G$
  and
  $\wm^\wK= \{V  \in \wm \, : \, \Ad(h)V= V \ \, \text{for all $h
    \in \wK$}\} =  \{V  \in \wm \, : \, [\wk,V]= 0\}$.
 One has  $f(X) =X_{\wpzero}$ and $f^{-1}(V) = V^\#$.
\end{lem}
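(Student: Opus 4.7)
The plan is to establish the two maps, show their images land in the claimed target, then check they are mutually inverse using the homogeneity structure of $\Mt= G/\wK$.

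First, I would verify that $f(X) := X_{\wpzero}$ indeed lies in $\wm^{\wK}$. For $h \in \wK$ one has $h \cdot \wpzero = \wpzero$, so $d\ltr_h(\wpzero)$ is a linear automorphism of $T_{\wpzero}\Mt \cong \wm$, and under this identification it is exactly $\Ad(h)$ restricted to $\wm$ (standard fact for reductive homogeneous spaces: $\wm \perp \wk$ is $\Ad(\wK)$-invariant). Since $X$ is $G$-invariant, $X_{\wpzero} = X_{h \wpzero} = d\ltr_h(\wpzero). X_{\wpzero}$, hence $\Ad(h) X_{\wpzero} = X_{\wpzero}$; so $X_{\wpzero} \in \wm^{\wK}$. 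The equivalence $\wm^{\wK} = \{V \in \wm : [\wk, V] = 0\}$ is the Lie-algebraic translation obtained by differentiating, using that $\wK$ is connected.

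Next, for $V \in \wm^{\wK}$, I would check that the formula $V^\#_q = \frac{d}{dt}|_{t=0}(q e^{tV})$ actually defines a vector field on $\Mt$. The only issue is well-definedness on cosets: if $q = g\wK = gk\wK$ with $k \in \wK$, then one must have $gke^{tV}\wK = ge^{tV}\wK$, i.e.~$e^{-tV}ke^{tV} \in \wK$ for all $t$ and all $k \in \wK$. This is equivalent to $\Ad(e^{-tV})\wk \subset \wk$, which follows from $[V,\wk] = 0$. Then $G$-invariance of $V^\#$ is immediate from the definition: $V^\#_{hq} = \frac{d}{dt}|_{t=0}(hqe^{tV}) = d\ltr_h(q). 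V^\#_q$. Evaluating at $\wpzero = e\wK$ and using the identification $T_{\wpzero}\Mt \cong \wm$ via $d\pi(e)$ (whose kernel is $\wk$), one finds $V^\#_{\wpzero} = d\pi(e)\left(\frac{d}{dt}|_{t=0} e^{tV}\right) = V$ since $V \in \wm$. Hence $f(V^\#) = V$.

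It remains to show $f$ is injective, equivalently that $(X_{\wpzero})^\# = X$ for every $X \in \calX(\Mt)^G$. Given $q= g\wpzero$, write $V= X_{\wpzero}$; then by $G$-invariance $X_q = d\ltr_g(\wpzero). V = \frac{d}{dt}|_{t=0}(ge^{tV}\wK)$, which is precisely $V^\#_q$ by definition. The linearity of both maps is obvious from the formulas, so $f$ is a linear bijection with the stated inverse. The only genuinely delicate step is the well-definedness of $V^\#$; everything else is a direct manipulation of the identifications $\wm \cong T_{\wpzero}\Mt$ and the left $G$-action.
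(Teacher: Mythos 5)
Your proof is correct and complete; note that the paper itself states this lemma as ``well known'' and gives no proof, so there is nothing to diverge from. Your argument --- identifying the isotropy action at $\wpzero$ with $\Ad(\wK)|_{\wm}$, checking that $[V,\wk]=0$ makes the right action of $\exp(\R V)$ (hence $V^\#$) well defined on cosets, and using that a $G$-invariant vector field is determined by its value at $\wpzero$ --- is exactly the standard reasoning the authors implicitly rely on.
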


Using the previous lemma one can prove the following
proposition. Since we will not use this result, the proof is left
to the interested reader.  

\begin{prop}
  \label{prop6.9}
  The space $\calX(G/\Kt)^G$ is equal to $\R X_0^\#$ unless
  $\wt = i \Ker(\alpha)$ for an $\alpha \in \rootsQ$ such that,
  setting $\delta = \vpi(\alpha)$, the $\wk_\C$-module
  $\fm_\delta^\C = \C E_\alpha$ is of dimension one.  In this
  case one has
  $\calX(G/\Kt)^G = \R X_0^\# \boplus \R X_\alpha^\# \boplus \R
  Y_\alpha^\#$ and
  $ih_\alpha = \pm \sqrt{\ascal{h_\alpha}{h_\alpha}} X_0 \in
  \fz$. 
\end{prop}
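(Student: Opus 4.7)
The plan is to combine Lemma~\ref{lem2.1} with the root space analysis of Section~\ref{sec6}. By Lemma~\ref{lem2.1}, $\calX(\Mt)^G$ is isomorphic to $\wm^\wK = \{V \in \wm : [\wk,V]=0\}$ via $V \mapsto V^\#$, so it suffices to determine $\wm^\wK$. The decomposition $\wm = \R X_0 \perp \fm = \R X_0 \perp \bigoplus_{\gamma \in R_T^+} \fm_\gamma$ is $\wk$-stable, and $X_0 \in \fz$ lies in the centre of $\fk$, hence in $\wm^\wK$. Thus $\wm^\wK = \R X_0 \oplus \bigoplus_\gamma \fm_\gamma^\wK$, and everything reduces to deciding for which $\gamma$ the space $\fm_\gamma^\wK$ is non-zero.

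The key observation is that each $\fm_\gamma^\C$ is irreducible as a $\wk_\C$-module. Indeed, by the discussion preceding Theorem~\ref{thm6.7}, $\fm_\gamma^\C$ is irreducible under $\fk_\C$, and $\fz_\C$ acts on it by the scalar character $\gamma$; since $\fk_\C = \fz_\C + [\fk_\C,\fk_\C]$, the semisimple part $[\fk_\C,\fk_\C]\subset \wk_\C$ already acts irreducibly. By Schur's lemma, $\fm_\gamma^\wK$ is either $0$ or all of $\fm_\gamma$, the latter happening precisely when $\wk$ acts trivially on $\fm_\gamma^\C$. Triviality of $[\fk,\fk]$ on the irreducible module $\fm_\gamma^\C$ forces $\dim_\C \fm_\gamma^\C = 1$ (any $1$-dimensional representation of a semisimple Lie algebra is trivial, and conversely). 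Writing $\fm_\gamma^\C = \C E_\alpha$ with $\vpi^{-1}(\gamma) = \{\alpha\}$, the remaining condition that $\wz$ acts trivially is $\alpha(\wz_\R) = 0$, and the automatic condition $\alpha(h_\mu)=0$ for all $\mu \in \rootsP$ (obtained from triviality of $\ad(\ft^\rootsP)$ on $E_\alpha$) gives $\alpha(\wt_\R) = 0$, i.e.~$\wt = i\Ker(\alpha)$.

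It remains to verify uniqueness of such $\alpha$ and to compute $ih_\alpha$. The condition $\wt = i\Ker(\alpha)$ means $h_\alpha \perp \wt_\R = \wz_\R \perp h_\rootsP$, so $h_\alpha \in \fz_\R \cap \wz_\R^\perp = \R Y_0$ where $Y_0 = -iX_0$. Thus $h_\alpha = \pm|h_\alpha|\, Y_0$ with $|h_\alpha|^2 = \ascal{h_\alpha}{h_\alpha}$ and, using $\ascal{Y_0}{Y_0}=1$ (a direct check from $B(X_0,X_0)=1$), one gets $ih_\alpha = \pm\sqrt{\ascal{h_\alpha}{h_\alpha}}\, X_0 \in \fz$. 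If a second root $\alpha'$ satisfied the same conditions, then $h_{\alpha'} \in \R Y_0$ would force $\alpha,\alpha'$ proportional, hence equal since both are positive roots in a reduced system. The main obstacle is the verification that $\fm_\gamma^\C$ stays irreducible as a $\wk_\C$-module and the careful translation of the algebraic condition ``$\wk$ acts trivially on $\fm_\gamma^\C$'' into the geometric condition $\wt = i\Ker(\alpha)$ together with $\dim_\C \fm_\gamma^\C = 1$; once this is in place the conclusion and the formula for $ih_\alpha$ follow by direct computation.
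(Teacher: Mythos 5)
The paper does not actually supply a proof of Proposition~\ref{prop6.9} (it is explicitly ``left to the interested reader'', with the hint that it follows from Lemma~\ref{lem2.1}), and your argument is a correct implementation of exactly that intended strategy: reduce to computing $\wm^{\wk}$ via Lemma~\ref{lem2.1}, then use the decomposition $\wm = \R X_0 \perp \bigoplus_\gamma \fm_\gamma$ and the irreducibility of the $\fm_\gamma^\C$ under $[\fk_\C,\fk_\C] \subset \wk_\C$. The only point worth tightening is the appeal to Schur's lemma on the \emph{real} module $\fm_\gamma$: its complexification is the sum of the two irreducibles $\fm_\gamma^\C \oplus \fm_{-\gamma}^\C$, so you should pass to the complexification, conclude that a nonzero invariant vector forces one summand (hence, by conjugation with respect to the compact real form, both) to be trivial, and then descend; with that one-line fix the proof is complete.
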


\begin{defn} \label{rem6.9}
Set $\xi = X_0^\#$.
We say that $\wk$ is \emph{generic} if $\wm^\wk= \R X_0$,
i.e.~$\calX(G/\Kt)^G= \R \xi$. 
\end{defn}

\noindent Notice that, by the previous proposition, $\wk$
is generic except in a finite number of cases.

We now study some invariant vector fields which are important in
the construction of harmonic normal almost contact metric
structures on~$G/\Kt$, cf.~\S\ref{thm1.1}. 

Endow $G/\Kt$ with a $G$-invariant metric $g_{\scriptscriptstyle{G/\wK}}$, i.e.:
\[
(g_{\scriptscriptstyle{G/\wK}})_{kp}(d\ltr_k(p).U_{p},d\ltr_k(p).V_{p}) =
(g_{\scriptscriptstyle{G/\wK}})_{k}(U_{p},V_{p}) 
\]
for all $U,V \in \calX(G/\Kt)$, $p \in G/\Kt$ and $k \in
G$. Since $G/\Kt$ is a reductive homogeneous space, $g_{\scriptscriptstyle{G/\wK}}$ is
uniquely determined by an $\Ad(\wK)$-invariant Euclidean product
$g_\wm$ on $\wm$. Denote by $\nabla$ the Levi-Civita connection
associated to $g_{\scriptscriptstyle{G/\wK}}$. Then $\nabla$ is $G$-invariant, that is to
say:
\begin{equation*} \label{eq2.1c}
 g_* \nabla_X Y = \nabla_{g_*X} g_*Y  \ \; \text{for all $g\in G$
   and $X,Y \in \calX(G/\Kt)$.}
\end{equation*}
There is a bijection between the space of $G$-invariant
  connections on $G/\Kt$ and the space of bilinear maps $\Lambda:
  \wm \times \wm \to \wm$ which are $\wK$-invariant,
  i.e.~$\Lambda(\Ad(h)U,\Ad(h)V) =\Lambda(U,V)$ for $U,V \in \wm$ and $h
  \in \wK$ (see \cite{Nom}
  and \cite{KN} for the general theory).
We quickly recall, for further use, the construction of a connection $\nabla$
starting from~$\Lambda$ as given in~\cite[\S 7]{Nom}.  Firstly, if
$\wpi : G \sto G/\wK$ is the natural projection, there exists $\mathcal{U}$ contained in a neighbourhood of
$e \in G$ such that
$\wpi : \mathcal{U} \to \Ut= \wpi(\mathcal{U})$ is a diffeomorphism. 
Then, for $X \in \wm$ one can define $X^* \in \calX(\Ut)$ by
setting $X^*_{k\wpzero} = d\ltr_k(\wpzero).X$ for all $k \in \mathcal{U}$.
The connection $\nabla$ is completely determined on $\Ut$ by
$\nabla_{X^*}Y^* = \Lambda(X,Y)^*$ for $X,Y \in \wm$. 
This local construction then furnishes the invariant connection $\nabla$ on
the manifold~$G/\Kt$. In particular, if $\calO$ is an open set 
of $G/\Kt$, one has on the translated open set $g\calO$:
\begin{equation}
  \label{eq2.1b}
 g_*\nabla_XY =
\nabla_{g_*X}g_*Y \ \; \, \text{for all $X, Y \in \calX(\calO)$}. 
\end{equation}
By \cite[Theorem~13.1]{Nom} and
\cite[Theorem~3.3 \& p.~202]{KN} the map $\Lambda$ corresponding to the Levi-Civita connection
$\nabla$ associated to the chosen invariant metric $g_{\scriptscriptstyle{G/\wK}}$ is given by the
following formula for $X,Y \in \wm$:
\begin{equation}
  \label{eq2.2}
\Lambda(X,Y) = \half [X,Y]_\wm + U(X,Y)   
\end{equation}
where $U(X,Y) \in \wm$ is determined by
\begin{equation}
  \label{eq2.3}
 2g_\wm(U(X,Y),Z) = g_\wm(X,[Z,Y]_\wm) +
   g_\wm(Y,[Z,X]_\wm) \  \; \text{for $Z \in \wm$.}
\end{equation}

Let $\vtheta \in \End T(G/\Kt)$; one says that $\vtheta$ is
$G$-invariant if:
\begin{equation*}
  d\ltr_g(q) \circ \vtheta_q= \vtheta_{gq} \circ
  d\ltr_g(q) \ \; \text{for all $g \in G$ and $q \in G/\Kt$}.
\end{equation*}
Recall, cf.~\eqref{deltatheta}, that
$\delta \vtheta \in \calX(G/\Kt)$ is defined by
$$\delta \vtheta = \sum_{i=1}^m \nabla_{E_i} \vtheta(E_i) -
\sum_{i=1}^m \vtheta(\nabla_{E_i} E_i),$$
where $(E_i)_{1 \le i \le m}$ is an orthonormal frame
on a open subset $\calO$ of $G/\Kt$.
Recall also, see~\eqref{laplacian}, that for $V \in \calX(G/\Kt)$ we have defined $\rough
V \in \calX(G/\Kt)$ by 
$$\nabla^*\nabla V=-\sum_{i=1}^m
\nabla_{E_i}(\nabla_{E_i}V) - \nabla_{\nabla_{E_i}E_i}V.$$

We will use the following remark.

\begin{remm} \label{rema}
   If $(E_i)_{1 \le i \le m}$ is an orthonormal frame on an open
subset $\calO$ of $G/\Kt$ and $g \in G$, then
$(g_*E_i)_{1 \le i \le m}$ is an orthonormal frame on the open
subset $g\calO$.
\end{remm}

\noindent  The remark follows from the invariance
of $g_{\scriptscriptstyle{G/\wK}}$ since, if $q \in \calO$ and $k \in G$, we obtain:
\[
(g_{\scriptscriptstyle{G/\wK}})_{kq}({k_*E_i}_{\mid kq}, {k_*E_j}_{\mid kq})=
(g_{\scriptscriptstyle{G/\wK}})_{kq}(d\ltr_k(q).{E_i}_{\mid q}, d\ltr_k(q).{E_j}_{\mid q}) 
= (g_{\scriptscriptstyle{G/\wK}})_{k}({E_i}_{\mid q}, {E_j}_{\mid q}) = \delta_{ij}.
\]

\begin{lem}
  \label{lem2.2} \label{lem2.3}
  {\rm (1) } If
  $\vtheta \in \End T(G/\Kt)$ is $G$-invariant, the vector field
  $\delta\vtheta$ is $G$-invariant.
  \\
  {\rm (2)} If $V \in \calX(G/\Kt)^G$, the vector field
  $\nabla^*\nabla V$ is $G$-invariant.
  \\
  {\rm (3)} Let $\vtheta$ be $G$-invariant and
  $V \in \calX(G/\Kt)^G$.  If $\wk$ is generic,
  $\delta\vtheta \in \R \xi$ and $\nabla^*\nabla V \in \R \xi$.
\end{lem}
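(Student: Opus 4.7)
The proof divides naturally according to the three assertions, and the core idea is the same for (1) and (2): use the $G$-invariance of the Levi-Civita connection, expressed in~\eqref{eq2.1b}, together with Remark~\ref{rema} on the translation of orthonormal frames, to push every term through~$g_*$.

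For (1), I would fix $g \in G$ and an orthonormal frame $(E_i)_{1\le i \le m}$ on an open set $\calO \subset \Mt$. By Remark~\ref{rema}, $(g_*E_i)$ is an orthonormal frame on $g\calO$, so the defining formula~\eqref{deltatheta} yields, on $g\calO$,
\[
\delta\vtheta= \sum_{i=1}^m \wnabla_{g_*E_i}\vtheta(g_*E_i) - \sum_{i=1}^m \vtheta(\wnabla_{g_*E_i} g_*E_i).
\]
The $G$-invariance of $\vtheta$, rewritten as $\vtheta(g_*X) = g_*\vtheta(X)$ for $X \in \calX(\calO)$, together with~\eqref{eq2.1b}, allows me to pull $g_*$ out of each term, giving $\delta\vtheta_{\mid g\calO} = g_*(\delta\vtheta_{\mid \calO})$. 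Since $g$ was arbitrary, $\delta\vtheta \in \calX(\Mt)^G$.

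For (2), the same strategy applies to the formula~\eqref{laplacian}. Using $g_*V = V$ and~\eqref{eq2.1b}, one gets
\[
\wnabla^*\wnabla V_{\mid g\calO}= -\sum_{i=1}^m \wnabla_{g_*E_i}\bigl(\wnabla_{g_*E_i}g_*V\bigr) - \wnabla_{\wnabla_{g_*E_i}g_*E_i}g_*V = g_*\bigl(\wnabla^*\wnabla V_{\mid \calO}\bigr),
\]
which proves the $G$-invariance. Neither computation should pose any real difficulty; the only point to be careful about is the compatibility $\vtheta(g_*X) = g_*\vtheta(X)$, which follows directly from the pointwise definition of $G$-invariance of a tensor field of type $(1,1)$.

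Assertion (3) is then immediate: by (1) and (2) the vector fields $\delta\vtheta$ and $\wnabla^*\wnabla V$ belong to $\calX(\Mt)^G$. Lemma~\ref{lem2.1} identifies this space linearly with $\wm^\wK$ via $X \mto X_\wpzero$, $V \mto V^\#$. The generic assumption on $\wk$ (Definition~\ref{rem6.9}) means precisely $\wm^\wK = \R X_0$, so $\calX(\Mt)^G = \R \xi$, and both vector fields must lie in $\R \xi$. No obstacle is to be expected here, since all the real work has been done in (1) and (2); the hardest point is only notational care in handling the interplay between $g_*$, $\wnabla$ and $\vtheta$.
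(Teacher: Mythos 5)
Your proposal is correct and follows essentially the same route as the paper: both parts (1) and (2) are proved by translating an orthonormal frame via Remark~\ref{rema}, using the invariance of $\wnabla$ from~\eqref{eq2.1b} together with the compatibility $g_*\vtheta(X)=\vtheta(g_*X)$ to pull $g_*$ out of each term, and part (3) is the same immediate consequence of Definition~\ref{rem6.9} (via Lemma~\ref{lem2.1}). No gaps.
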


\begin{proof}
(1)  We need to show $\delta\vtheta = g_* \delta\vtheta$ for all $g \in G$.
Let $\calO$ and $(E_i)_i$ be as above.  We begin with the
 following observation.
 Let $X \in \calX(\calO)$ and $g\in G$, then the
vector field $\vtheta(X) : q \mto \vtheta_q(X_q)$ satisfies
\begin{equation}
  \label{eq2.5}
  g_* \vtheta(X) = \vtheta(g_*X) \ \; \text{on the open set $g\calO$}.
\end{equation}
Indeed, using the invariance of $\vtheta$ we have on $g\calO$:
\[
 g_* \vtheta(X)_{gq} = d\ltr_g(q).\vtheta_q(X)_q =
 \vtheta_{gq}(d\ltr_g(q).X_q) = \vtheta_{vq}(g_*X_{\mid gq}) =
 \vtheta(g_*X)_{gq}. 
\]
Therefore $g_*\vtheta(X) = \vtheta(g_*X)$ on $g\calO$.
Then,  by definition of
 $\delta\vtheta$, Remark~\ref{rema}, \eqref{eq2.1b} and~\eqref{eq2.5}, one gets
\[
 \delta\vtheta  =  \sum_{i}
  \nabla_{g_*E_i} \vtheta(g_*E_i) -   \vtheta(\nabla_{g_*E_i} g_*E_i)
 =  \sum_{i}g_*\nabla_{E_i}\vtheta(E_i)  - 
   g_*\vtheta(\nabla_{E_i} E_i) = g_* \delta\vtheta. 
\]
{\rm (2)} By Remark~\ref{rema} the vector field
$\nabla^*\nabla V$ on $g\calO$
  is equal to $-\sum_{i=1}^m
  \nabla^2_{g_*E_i,g_*E_i}V$. Thus, on $g\calO$ we have:
  \begin{align*}
    \nabla^*\nabla V & =   -\sum_i \nabla^2_{g_*E_i,g_*E_i}g_*V =
    -\sum_i \nabla_{g_*E_i}(\nabla_{g_*E_i}g_*V) -
    \nabla_{\nabla_{g_*E_i}g_*E_i}g_*V \ \; \text{(by invariance
    of $V$)}
    \\
   & = g_*\Bigl\{ -\sum_i \nabla_{E_i}(\nabla_{E_i}V) -
    \nabla_{\nabla_{E_i}E_i}V\Bigr\}  = g_* \nabla^*\nabla V \ \;
     \text{(by invariance of $\nabla$)}.
  \end{align*}
 {\rm (3)} follows from Definition~\ref{rem6.9} combined with  (1) and~(2).
\end{proof}

Recall that $\pi : G/\Kt \sto G/K$ is a principal
$\bbS^1$-bundle.  Fix a $G$-invariant metric
$g_{\scriptscriptstyle{G/K}}$ on $G/K$ given by scalars
$\kappa_\gamma >0$, $\gamma \in R_T^+$, and let $J$ be a
$G$-invariant almost complex structure on $G/K$ associated to an
invariant complex structure $J_\fm$ on $\fm$, see
Theorem~\ref{thm6.7}.

Set $\fs = \Lie(\bbS^1) = \fk/\wk = \R A$ with $A = [X_0 + \wk]$.
We begin by recalling a few (well-known) facts  about the
construction of $G$-invariant principal connections $\eta$ on the
principal bundle $G/\Kt$.\\
Such a  connection is an element of $\Omega^1(G/\Kt,\fs)$ such that
\begin{equation*} \label{eq4.1}
\eta_x(A^\#_x) = A, \quad \eta_{xk}(d\rtr_{k}(x).X) =
\Ad(k)^{-1}\eta_x(X) = \eta_x(X) \ \; \text{(since $\bbS^1$ is abelian)},
\end{equation*}
 for all $x \in G/\Kt$, $X \in T_x(G/\Kt)$ and $k \in \bbS^1$. Here,
 $\rtr_k$ is the right translation by $k \in \bbS^1$ and $A^\#$ is the
 vector field defined by $A^\#_{g\wpzero} = \frac{d}{dt}_{\mid t
   =0} (g e^{tX_0} \wpzero)$, hence $A^\# =\xi$
 (see~Definition~\ref{rem6.9}). 
The connection $\eta$ is said to be invariant if $\eta \in
\Omega(G/\Kt,\fs)^G$, i.e.,
\[
\eta_{gx}(d\ltr_g(x).X) = \eta_x(X) \ \; \text{for all $x \in
  G/\Kt$ and $g \in G$.}
\]
Using the fact that $\bbS^1$ is abelian, the invariant connections are described
by the following proposition, see, for example, \cite[pages 18 \&
39, Theorem~1.4.5]{CS}. Recall that, here,  the
exterior derivative of $\eta$ is given by
$d\eta(X,Y) = -\eta([X,Y])$ for $X,Y \in \calX(G/\Kt)$.

\begin{prop}
  \label{prop4.1}
  {\rm (1)} There exists a bijection  between the set of
  $G$-invariant principal connections and the set of linear maps $\nu : \g \to
  \fs$ such that:
  \begin{enumerate}[{\rm  \indent (a)}]
  \item  $\nu_ {\mid \fk}$ coincides with the canonical
    projection from $\fk$ to $\fs = \fk/\wk$;
    \item $\nu(\Ad(t)v) = \nu(v)$ for all $t \in K$ and $v \in \g$.
    \end{enumerate}
\noindent     {\rm (2)} the connection form of $\eta$ equals $d\eta \in
    \Omega^2(G/\Kt,\fs)^G$ and is determined by its value at
    $\wpzero$:
\[
d\eta_{g\wpzero}(U,V) = d\eta_\wpzero(d\ltr_{g^{-1}}(g\wpzero).U,d\ltr_{g^{-1}}(g\wpzero).V)
\]
for all $g \in G$ and $U,V \in T_{g\wpzero} (G/\Kt)$. 
\end{prop}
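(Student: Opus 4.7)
The approach is the classical Wang-type correspondence: any $G$-invariant object on a reductive homogeneous space is determined by its value at the base point $\wpzero$, and we want to characterise which linear data on $\g$ come from genuine $G$-invariant principal connections on $\Mt$.

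Forward direction. Given $\eta \in \Omega^1(\Mt,\fs)^G$, define $\nu : \g \to \fs$ by $\nu(v) := \eta_\wpzero(d\wpi(e).v)$. For $v \in \fk$, the curve $s \mapsto e^{sv}\wK$ represents $d\wpi(e).v$, and this tangent vector coincides with the value at $\wpzero$ of the fundamental vector field of the right $S$-action generated by the class $\bar v \in \fk/\wk = \fs$; the principal-connection axiom $\eta(\bar v^\#) = \bar v$ then yields $\nu(v) = \bar v$, i.e.\ (a). For (b), fix $t \in K$ and $v \in \g$; writing out the path $s \mapsto te^{sv}t^{-1}\wK$ in two ways produces the key identity
\begin{equation*}
d\wpi(e).\Ad(t)v \;=\; d(\rtr_{[t^{-1}]}\circ \ltr_t)(\wpzero).\bigl(d\wpi(e).v\bigr),
\end{equation*}
where $[t^{-1}] = t^{-1}\wK \in S$. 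Applying $\eta$ to both sides and using successively the left $G$-invariance and the right $S$-equivariance of $\eta$ (the latter reducing to ordinary invariance because $S$ is abelian) yields $\nu(\Ad(t)v) = \nu(v)$.

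Reverse direction. Conversely, given $\nu$ satisfying (a) and (b), condition (a) forces $\nu$ to vanish on $\wk$, so the prescription $\eta_\wpzero(d\wpi(e).v):=\nu(v)$ is unambiguous on $T_\wpzero\Mt\cong\g/\wk$. Extend by $\eta_{g\wpzero}(d\ltr_g(\wpzero).X):=\eta_\wpzero(X)$; this is well-defined because two representatives $g_1, g_2$ with $g_1\wpzero=g_2\wpzero$ differ by $h=g_1^{-1}g_2\in \wK$, the map $d\ltr_h(\wpzero)$ acts on $\g/\wk$ as $\Ad(h)$, and (b) restricted to $\wK\subset K$ absorbs the discrepancy; smoothness and $G$-invariance are automatic from the homogeneous construction. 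The axiom $\eta(A^\#)=A$ reduces at $\wpzero$ to $\nu(X_0)=A$, which is (a). The most delicate verification is right $S$-equivariance: computing $(\rtr_{[t]}^*\eta)_\wpzero$, bringing it back to $\wpzero$ via left $G$-invariance and running the displayed identity in reverse gives $(\rtr_{[t]}^*\eta)_\wpzero(d\wpi(e).v) = \nu(\Ad(t^{-1})v)=\nu(v)$, so $\rtr_{[t]}^*\eta = \eta$ at $\wpzero$ and hence on all of $\Mt$ by $G$-invariance. The two constructions are visibly mutually inverse.

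Part (2). Exterior differentiation commutes with pullback by diffeomorphisms, so $d\eta\in \Omega^2(\Mt,\fs)^G$ whenever $\eta$ is $G$-invariant; the displayed translation formula is then the tautology that a $G$-invariant $2$-form is determined by its value at $\wpzero$. The principal obstacle throughout the argument is book-keeping between the left $G$-action, the right $S$-action and the adjoint action, concentrated in the identity $d\wpi(e).\Ad(t)v = d(\rtr_{[t^{-1}]}\circ\ltr_t)(\wpzero).d\wpi(e).v$; once this identity is in hand, all the remaining verifications are mechanical.
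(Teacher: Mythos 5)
Your argument is correct. Note that the paper does not prove Proposition~\ref{prop4.1} at all: it is stated as a known fact with a reference to \cite[pages 18 \& 39, Theorem~1.4.5]{CS}, so there is no in-paper proof to compare against. What you have written is the standard Wang-type correspondence for invariant principal connections on a homogeneous bundle $G/\wK \sto G/K$ with abelian structure group $S=K/\wK$, and it is essentially the argument one finds in the cited reference. The key identity $d\wpi(e).\Ad(t)v = d(\rtr_{[t^{-1}]}\circ\ltr_t)(\wpzero).\bigl(d\wpi(e).v\bigr)$ is correctly set up (it uses that $\wK$ is normal in $K$, which holds here since $\fk = \R X_0 \perp \wk$ with $X_0$ central in $\fk$ and $K$ connected), and both directions of the bijection are handled properly, including the well-definedness of the left-invariant extension via condition~(b) restricted to $\wK$. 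Two points you leave implicit but should state: first, in the reverse direction the axiom $\eta(\bar v^{\#})=\bar v$ must hold at every point, not just at $\wpzero$; this follows because $A^{\#}=\xi$ is left invariant and your $\eta$ is left invariant by construction, so $\eta(A^{\#})$ is constant. Second, in part~(2) the phrase ``connection form'' in the statement should be read as ``curvature form'', and the identification of the curvature with $d\eta$ uses that $S$ is abelian (so the $\frac12[\eta\wedge\eta]$ term vanishes), which the paper records just before the proposition. Neither point is a gap, only a matter of making the verification explicit.
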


In the previous bijection, if $\nu : \g \to \fs$ one defines
$\eta_\wpzero$  by $\eta_\wpzero(\bar{u})= \nu(u)$ for all
$\bar{u} \in T_\wpzero (G/\Kt) \equiv \g/\wk$ (by condition~(a) this is well
defined).

 \begin{lem}
   \label{lem4.2}
  One defines a $G$-invariant principal connection $\eta$ on the principal
  bundle $\pi : G/\Kt  \sto  G/K$ by setting
\[
  \eta_{\wpzero}(U) = B(X_0,U) A \ \; \text{for all $U \in \g$.}
\]
Define $\Sigma \in \Omega^2(G/K,\fs)^G$ by $\Sigma_{p_0}(U,V) = 
\omega_{-X_0}(U,V) A$.  
 Then
 $d\eta  = \pi^*\Sigma$ and $\Sigma$ is $J$-invariant.
 \end{lem}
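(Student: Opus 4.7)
The strategy is to verify in three steps that $\eta$ defines a $G$-invariant principal connection, that $d\eta=\pi^*\Sigma$, and that $\Sigma$ is $J$-invariant.

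First I would apply Proposition~\ref{prop4.1} to the linear map $\nu:\g\to\fs$ defined by $\nu(U)=B(X_0,U)\,A$. Condition~(a) is immediate from $B(X_0,X_0)=1$ and $X_0\perp\wk$: for $U=\alpha X_0+u\in\fk$ with $u\in\wk$, one has $\nu(U)=\alpha A$, exactly the canonical projection $\fk\to\fs$. For condition~(b), $\Ad$-invariance of $B$ gives $B(X_0,\Ad(t)v)=B(\Ad(t^{-1})X_0,v)$, and $X_0\in\fz=\Lie(C)$ lies in the centre of $\fk$, so $\Ad(K)X_0=X_0$ by connectedness of $K$. This yields the announced $G$-invariant connection $\eta$ with $\eta_{\wpzero}(U)=B(X_0,U)A$ on $\wm$.

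For the identity $d\eta=\pi^*\Sigma$, both forms are $G$-invariant ($\Sigma$ being well-defined and $G$-invariant on $M$ is the $\Ad(K)$-invariance of $\omega_{-X_0}$ from Proposition~\ref{prop6.6}(2)), so it suffices to compare them at $\wpzero$. For $U,V\in\wm$ I would extend them to fundamental vector fields $U^\star,V^\star$ of the left $G$-action, $U^\star_q=\frac{d}{dt}\bigl|_{t=0}\exp(tU)\cdot q$, which satisfy $U^\star_{\wpzero}=U$ and the anti-homomorphism identity $[U^\star,V^\star]=-[U,V]^\star$. Using $G$-invariance of $\eta$ together with $X_0\perp\wk$, the scalar function $q\mapsto \eta(V^\star)_q$ at $q=g\wpzero$ computes to $B(\Ad(g)X_0,V)A$; differentiating at $\wpzero$ with $g(t)=\exp(tU)$ yields $U^\star_{\wpzero}\eta(V^\star)=B([U,X_0],V)A$, and symmetrically for $\eta(U^\star)$.

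Plugging these into $d\eta(U^\star,V^\star)=U^\star\eta(V^\star)-V^\star\eta(U^\star)-\eta([U^\star,V^\star])$ and invoking the $\Ad$-invariance identity $B([U,X_0],V)=-B(X_0,[U,V])=-B([V,X_0],U)$, the three contributions collapse to
\[
d\eta_{\wpzero}(U,V)=-B(X_0,[U,V])A=\omega_{-X_0}(U,V)\,A.
\]
Again using $X_0\perp\wk$ and Proposition~\ref{prop6.6}(2), this equals $\omega_{-X_0}(U_\fm,V_\fm)A=\Sigma_{\pzero}(d\pi(\wpzero)U,d\pi(\wpzero)V)=(\pi^*\Sigma)_{\wpzero}(U,V)$, giving $d\eta=\pi^*\Sigma$. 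The $J$-invariance of $\Sigma$ is then immediate from Proposition~\ref{prop6.6}(2). The main bookkeeping challenge is the precise sign convention for fundamental vector fields of a left action: the minus sign in $[U^\star,V^\star]=-[U,V]^\star$ must cancel against the two $\Ad$-invariance substitutions so that the three terms collapse to a single copy of $-B(X_0,[U,V])A$.
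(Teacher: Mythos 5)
Your proposal is correct and follows essentially the same route as the paper: verify conditions (a) and (b) of Proposition~\ref{prop4.1} (using $B(X_0,\wk)=0$ and the $\Ad(K)$-invariance of $X_0 \in \fz$), reduce to the point $\wpzero$ by $G$-invariance, obtain $d\eta_{\wpzero}(U,V) = -B(X_0,[U,V])A$, and conclude via Proposition~\ref{prop6.6}. The only difference is that you derive that last identity from scratch with the three-term Cartan formula on fundamental vector fields (correctly tracking the sign in $[U^\star,V^\star]=-[U,V]^\star$), whereas the paper simply invokes the convention $d\eta(X,Y)=-\eta([X,Y])$ recorded just before Proposition~\ref{prop4.1}; both yield the same result.
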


 \begin{proof}
   We first verify the conditions (a) and (b) of
   Proposition~\ref{prop4.1}. The orthogonal projection of
   $U \in \fk$ onto $\R X_0$ is equal to $B(X_0,U) X_0$,
   therefore its image in $\fs = \ft/\wt$ is
   $\eta_{\wpzero}(U) = B(X_0,U) A$, this gives~(a). Let
   $t \in K$ and $v \in \g$. Then we have:
$$
\eta_{\wpzero}(\Ad(t)v) = B(X_0,\Ad(t)v) A =
   B(\Ad(t)^{-1}X_0,v) A = B(X_0,v) A =
   \eta_{\wpzero}(v).
$$
   This proves~(b). Thus, Proposition~\ref{prop4.1} applies and $\eta$ is a
   $G$-invariant connection.
   \\
   Recall that $\omega_{X_0}(U,V) = B(X_0,[U,V])$ and that
   $\omega_{X_0}$ is $\Ad(K)$-invariant, see
   Proposition~\ref{prop6.6}. Thus $\Sigma_{p_0}$ identifies
   with an $\Ad(K)$-invariant element of
   $\Omega^2(\g/\fk,\fs)$. It follows that we can define a
   $G$-invariant form $\Sigma \in \Omega^2(G/K,\fs)^G$ by its
   value at $p_0$.  Then, since $\pi$, $d\eta$ and $\Sigma$
   are $G$-invariant it suffices to prove the second claim
   at the point $\wpzero$. Recall that
   $$d\pi({\wpzero}) : T_{\wpzero} (G/\Kt) \equiv \wm \to T_{p_0}
  ( G/K)\equiv \fm$$
  coincides with the projection from $\wm$
   onto $\fm$ given by $\wm = \R X_0 \obot \fm$. If
   $X,Y \in T_{\wpzero} (G/\Kt) \equiv \wm$, one has
 \[
d\eta_{\wpzero}(X,Y)= - \eta_{\wpzero}([X,Y]) = -B(X_0,[X,Y])A = -
B(X_0,[X_\fm,Y_\fm]) A,
\]
cf.~Proposition~\ref{prop6.6}. Therefore,
\[
d\eta_{\wpzero}(X,Y)= \Sigma_{p_0}(d\pi({\wpzero}).X,
d\pi({\wpzero}).Y) = (\pi^*\Sigma)_{\wpzero}(X,Y). 
\]
The $J$-invariance of $\Sigma$ follows from the
$J_\fm$-invariance of $\omega_{-X_0}$, cf.~Proposition~\ref{prop6.6}.
 \end{proof}

 \begin{Notation}
   \label{not4.3}
   In order to recover the notation used in \cite{Hat, Mor1,
     Mor2, Ogi} 
   we identify the one dimensional Lie algebra $\wt= \R A$ with
   $\R$ by sending $A$ to~$1$. Under this notation,
   $\eta \in \Omega^1(G/\Kt,\R)^G$, $\Sigma \in \Omega^2(G/K,\R)^G$,
   and:
   \begin{equation*}
     \label{eq4.3}
     \eta_\wpzero(X)= B(X_0,X), \quad \Sigma_\pzero(U,V) =
     -\omega_{X_0}(U,V) = - B(X_0,[U,V]),
   \end{equation*}
   for all $X \in \g$, $U,V \in \fm \equiv T_\pzero (G/K)$.
     Since $\eta_\wpzero(\xi_\wpzero) = B(X_0,X_0) = 1$
   we obtain  $\eta(\xi)= 1$ by $G$-invariance of~$\eta$.
 \end{Notation}

 The connection $\eta$ furnishes the decomposition $T(G/\Kt)= H(G/\Kt) \boplus V(G/\Kt)$ where
 $d\pi : H(G/\Kt) = \Ker \eta \isomto T(G/K)$ and $V(G/\Kt)= \Ker d\pi$ is
 a sub-bundle of rank one.
 \\
 Recall that we set 
 $\psi(p) = d\pi(p)^{-1} : T_{\pi(p)}(G/K) \isomto H_p(G/\Kt)$. One has
 $V_{g\wpzero}(G/\Kt)= d\ltr_g(\wpzero). V_\wpzero (G/\Kt)$ by invariance
 of $\pi$, and the invariance of $\eta$ implies that
 $H_{g\wpzero}(G/\Kt) = d\ltr_g(\wpzero). H_\wpzero (G/\Kt)$.
Clearly, $\xi$ is a $\calC^\infty$-generator of $V(G/\Kt)$.

 Recall from \cite{Hat} the construction of the
 endomorphism $\theta$, see~\eqref{eq3.1}; 
 for all $p \in G/\Kt$ and $X \in \calX(G/\Kt)$, we set:
\begin{equation}
  \label{eq4.4}
  \theta_p(X_p) = \psi(p)(Jd\pi(p).X_p).
\end{equation}

\begin{prop}
  \label{prop4.4}
  Let $\theta$ be as in~\eqref{eq4.4}. The
  endomorphism $\theta$ is $G$-invariant and the triple
  $\txe$ is a $G$-invariant \ACS on $G/\Kt$ such that $\pi \circ
  \theta = J \circ \pi$.
\end{prop}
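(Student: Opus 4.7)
The plan is to verify four things in sequence: the $G$-equivariance of the horizontal lift $\psi$, the $G$-invariance of the triple $\txe$, the intertwining $d\pi \circ \theta = J \circ d\pi$, and the almost-contact identities $\eta(\xi) = 1$ and $\theta^2 = -\mathrm{Id} + \eta \otimes \xi$. Everything reduces to the defining formula $\theta_p = \psi(p) \circ J_{\pi(p)} \circ d\pi(p)$ together with the invariance statements already assembled.

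The crucial preliminary is the equivariance
\[
\psi(gp) \circ d\ltr_g(\pi(p)) = d\ltr_g(p) \circ \psi(p).
\]
This follows by inverting the identity $d\pi(gp) \circ d\ltr_g(p) = d\ltr_g(\pi(p)) \circ d\pi(p)$ (which itself expresses the $G$-equivariance of $\pi$) once restricted to horizontal subspaces; this restriction is well defined because $\eta$ is $G$-invariant (Lemma~\ref{lem4.2}), hence $d\ltr_g(p)(H_p\Mt) = H_{gp}\Mt$. Composing the defining formula for $\theta$ with the $G$-invariance of $J$ on $M$ then yields
\[
\theta_{gp} \circ d\ltr_g(p) = \psi(gp) \circ J_{g\pi(p)} \circ d\pi(gp) \circ d\ltr_g(p) = d\ltr_g(p) \circ \psi(p) \circ J_{\pi(p)} \circ d\pi(p) = d\ltr_g(p) \circ \theta_p,
\]
proving the $G$-invariance of $\theta$. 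The Reeb field $\xi = X_0^\#$ lies in $\calX(\Mt)^G$ by Lemma~\ref{lem2.1} since $X_0 \in \fz$ centralises $\fk \supset \wk$, and $\eta$ is $G$-invariant by construction (Proposition~\ref{prop4.1} and Lemma~\ref{lem4.2}); the whole triple is therefore $G$-invariant.

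The intertwining $d\pi \circ \theta = J \circ d\pi$ is immediate from $d\pi(p) \circ \psi(p) = \mathrm{Id}$ on $T_{\pi(p)}M$. For the almost-contact axioms, $\eta(\xi) = 1$ is recorded in Notation~\ref{not4.3}. The identity $\theta^2 = -\mathrm{Id} + \eta \otimes \xi$ is verified separately on vertical and horizontal vectors: on the vertical generator $\xi_p$, the fact that $\xi$ is vertical gives $d\pi(p)\xi_p = 0$, hence $\theta_p(\xi_p) = 0$ and $\theta_p^2(\xi_p) = 0 = -\xi_p + \eta(\xi_p)\xi_p$; on a horizontal vector $X_p \in H_p\Mt$, $\theta_p(X_p)$ lies in the image of $\psi(p)$, which is contained in $H_p\Mt$, so
\[
\theta_p^2(X_p) = \psi(p) \circ J_{\pi(p)}^2 \circ d\pi(p) X_p = -\psi(p) \circ d\pi(p) X_p = -X_p,
\]
matching $-X_p + \eta(X_p)\xi_p$ since $\eta(X_p) = 0$. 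Linearity concludes.

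No serious obstacle is anticipated. The only delicate aspect is the bookkeeping of base points when composing left translations with their differentials, and this is isolated once and for all in the $\psi$-equivariance established at the start; after that, the almost-contact relations collapse to the two linear-algebraic identities $J^2 = -\mathrm{Id}$ and $d\pi \circ \psi = \mathrm{Id}$.
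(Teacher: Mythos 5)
Your proof is correct. For the part the paper regards as the real content of the proposition --- the $G$-invariance of $\theta$ --- you argue exactly as the paper does: you establish the equivariance of $\psi$ by inverting the equivariance identity for $d\pi$, and then conjugate the defining formula $\theta_p = \psi(p)\circ J_{\pi(p)}\circ d\pi(p)$ by $d\ltr_g$. You are in fact slightly more careful than the printed proof, since you point out that inverting $d\pi(gp)\circ d\ltr_g(p) = d\ltr_g(\pi(p))\circ d\pi(p)$ only makes sense after restricting to horizontal subspaces, which requires $d\ltr_g(p)(H_p\Mt)=H_{gp}\Mt$, i.e.~the $G$-invariance of $\eta$; the paper records this fact elsewhere but does not invoke it at this point. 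The one genuine divergence is that for the claim that $\txe$ is an \ACS satisfying $\pi\circ\theta = J\circ\pi$, the paper simply cites Hatakeyama (\cite[\S2, Theorem~1]{Hat}), whereas you verify the axioms $\eta(\xi)=1$, $\theta\xi=0$ and $\theta^2=-\mathrm{Id}+\eta\otimes\xi$ directly by splitting $T_p\Mt$ into vertical and horizontal parts and using $d\pi\circ\psi=\mathrm{Id}$ and $J^2=-\mathrm{Id}$. Your route makes the proposition self-contained at essentially no extra cost; the paper's citation is shorter but leaves the reader to unwind the same two-line linear algebra from the reference.
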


\begin{proof}
Using \cite[\S 2, Theorem~1]{Hat} one obtains that $\txe$ is an
\ACS on $G/\Kt$ such that  $\pi \circ
  \theta = J \circ \pi$.
  By construction $\eta$ and $\xi$
  are $G$-invariant. Therefore it suffices to prove that
  $\theta$ is $G$-invariant.
 Notice first that $\psi$ is $G$-invariant. Indeed, for all
  $p \in G/\Kt$, $\psi({gp})$ is the inverse of $d\pi({gp}) = 
  d\ltr_g(\pi(p)) \circ d\pi(p) \circ d\ltr_{g^{-1}}(gp)$;
it follows that $\psi(gp)= d\ltr_g(p) \circ \psi(p) \circ
d\ltr_{g^{-1}}(g\pi(p))$ as required.   Then,
using the $G$-invariance of $J$, $\pi$ and $\psi$, 
one easily
gets $\theta_{gp}\circ d\ltr_g(p) = d\ltr_g(p) \circ \theta_p$. Thus $\theta$ is $G$-invariant.
\end{proof}

Recall that
$g_{\scriptscriptstyle{G/K}}$ is compatible with the almost complex structure~$J$,
i.e.~$(G/K,J,g_{\scriptscriptstyle{G/K}})$ is almost Hermitian,
cf.~Theorem~\ref{thm6.7}. Define a metric $g_{\scriptscriptstyle{G/\wK}}$ on $G/\Kt$
by
\begin{equation} \label{eq3.6a}
g_{\scriptscriptstyle{G/\wK}} = \pi^* g_{\scriptscriptstyle{G/K}} + \eta \otimes \eta,
\end{equation}
that is to say:
$(g_{\scriptscriptstyle{G/\wK}})_p(U,V) = g_{\scriptscriptstyle{G/K}}(d\pi(p).U,d\pi(p).V) + \eta(U)\eta(V)$
for all $U,V \in T_p(G/\Kt)$. 

\begin{lem}
  \label{lem4.5}
  The metric $g_{\scriptscriptstyle{G/\wK}}$ is $G$-invariant. The  $\wK$-invariant scalar product
  $g_\wm$ on $\wm= \R X_0 \boplus \fm$ defining $g_{\scriptscriptstyle{G/\wK}}$
  is given by $g_\wm = B$ on $\R X_0$ and
  $g_\wm = g_\fm$ on $\fm$. 
  \end{lem}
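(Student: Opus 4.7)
The plan is to verify the two claims separately, starting with $G$-invariance and then computing $\wkappa_\wm$ at the base point $\wpzero$.

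For $G$-invariance, I would argue that $\wkappa$ is a sum of two $G$-invariant tensors. First, $\pi^*\kappa$ is $G$-invariant because $\pi : \Mt \to M$ is $G$-equivariant (by construction of the bundle $G/\wK \to G/K$) and $\kappa$ is $G$-invariant on $M$: for $g \in G$, $p \in \Mt$ and $U,V \in T_p\Mt$, the chain rule gives $d\pi(gp) \circ d\ltr_g(p) = d\ltr_g(\pi(p)) \circ d\pi(p)$, whence $(\pi^*\kappa)_{gp}(d\ltr_g(p).U,d\ltr_g(p).V) = \kappa(d\ltr_g(p).d\pi(p).U,d\ltr_g(p).d\pi(p).V) = \kappa(d\pi(p).U,d\pi(p).V) = (\pi^*\kappa)_p(U,V)$. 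Second, $\eta \otimes \eta$ is $G$-invariant because $\eta$ itself is $G$-invariant by Lemma~\ref{lem4.2} (or Proposition~\ref{prop4.4}).

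For the explicit formula for $\wkappa_\wm$, I would evaluate at the base point $\wpzero$, using the identification $T_\wpzero \Mt \equiv \wm$ with $d\pi(\wpzero) : \wm \to \fm$ being the orthogonal projection along $\R X_0$ (as recalled just after the reductive decomposition $\g = \wk \perp \wm$ with $\wm = \R X_0 \perp \fm$). There are three cases to check. If $U,V \in \fm \subset \wm$, then $d\pi(\wpzero).U = U$, $d\pi(\wpzero).V = V$, while $\eta_\wpzero(U) = B(X_0,U) = 0$ and similarly for $V$ (since $X_0 \perp \fm$); hence $\wkappa_\wm(U,V) = \kappa_\fm(U,V)$. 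If $U = V = X_0$, then $d\pi(\wpzero).X_0 = 0$ and $\eta_\wpzero(X_0) = B(X_0,X_0) = 1$, giving $\wkappa_\wm(X_0,X_0) = 1 = B(X_0,X_0)$. Finally, if $U = X_0$ and $V \in \fm$, both contributions vanish, so $\wkappa_\wm(X_0,V) = 0 = B(X_0,V)$, confirming that $\R X_0$ and $\fm$ remain $\wkappa_\wm$-orthogonal.

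The $\Ad(\wK)$-invariance of $\wkappa_\wm$ is then automatic from the $G$-invariance of $\wkappa$ (restricting to the stabiliser $\wK$ of $\wpzero$), so no additional work is needed there. There is essentially no obstacle in this lemma: it is a direct unwinding of the definition~\eqref{eq3.6a}, together with the values $\eta_\wpzero(X) = B(X_0,X)$ from Notation~\ref{not4.3} and the orthogonal decomposition $\wm = \R X_0 \perp \fm$. The only point requiring any care is ensuring consistency of the identification $d\pi(\wpzero)^{-1} = \psi(\wpzero)$ with the projection onto $\fm$, which is already recorded in the paragraph preceding Lemma~\ref{lem2.1}.
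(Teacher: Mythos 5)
Your proposal is correct and follows essentially the same route as the paper: the $G$-invariance is deduced from the invariance of $d\pi$, $\kappa$ and $\eta$, and the formula for $\wkappa_\wm$ is obtained by evaluating at $\wpzero$ using the decomposition $\wm = \R X_0 \perp \fm$ and $\eta_\wpzero(U)=B(X_0,U)$ (the paper does this in one line via $U = B(X_0,U)X_0 + U_\fm$, whereas you split into the three cases, which amounts to the same computation).
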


\begin{proof}
  The invariance of $g_{\scriptscriptstyle{G/\wK}}$ follows from the invariance
  of $d\pi$, $g_{\scriptscriptstyle{G/K}}$ and $\eta$. 
  Through the usual
  identifications, at the point $\wpzero$ one has
  \begin{align*}
    g_\wm(U,V) & = (g_{\scriptscriptstyle{G/\wK}})_\wpzero(B(X_0,U)X_0 + U_\fm,
  B(X_0,V)X_0 + V_\fm) = B(X_0,U)B(X_0,V) +
                 (g_{\scriptscriptstyle{G/K}})_{p_0}(U_\fm,V_\fm)
    \\
    &= \eta_\wpzero(U)\eta_\wpzero(V) +
  (\pi^*g_{\scriptscriptstyle{G/K}})_\wpzero(U,V).
\end{align*}
This proves the second claim.
 \end{proof}

We now combine the previous results to get the next
theorem, already  obtained in \cite[Theorem~4.1]{Cor} by
different methods. We have defined above an almost contact metric 
structure $\txe$ on $G/\Kt$ and a metric $g_{\scriptscriptstyle{G/\wK}}$.
Recall that if $J$ is a complex structure on $G/K$, it is
determined by the choice of an invariant ordering $\prootsQ$,
i.e.~by a Weyl chamber $\mathsf{C} \subset \fz_\R$,
cf.~Theorem~\ref{thm6.8}~(1).  Moreover, $(g_{\scriptscriptstyle{G/K}},J)$ is \kahler
\sissi the form $\Omega_\fm$ is equal to
$\omega_{ih_\kappa}$ for some $h_\kappa \in \mathsf{C}$,
see~Theorem~\ref{thm6.8}~(2). 

\begin{thm}
  \label{thm6.12}
  Let $J$ be a $G$-invariant almost complex structure and
  $g_{\scriptscriptstyle{G/K}}$ be a $G$-invariant almost Hermitian metric on $G/K$.
  \\
  {\rm (1)} Endow the principal bundle
  $\pi : M= G/\wK \sto  N= G/K$ with the \ACS $\txe$ and the
  metric $g_{\scriptscriptstyle{G/\wK}}$ 
  defined by~\eqref{eq3.6a}. Then $\txeMK$ is
  $G$-invariant and defines an \ACMS on~$G/\Kt$.
  \\
  {\rm (2)} Assume that $J$ is a complex structure on $G/K$. Then
  $\txeMK$ is a \NACMS on $G/\Kt$.
  \\
  {\rm (3)} The structure $\txeMK$ is c-Sasakian \sissi
  $(g_{\scriptscriptstyle{G/K}},J)$ is \kahler and
  $ch_\kappa = iX_0 = -Y_0\in \mathsf{C}$ for some $c >0$. In
  this case, $-Y_0 \in \mathsf{C}$ and
  $\gamma(Y_0) = -c\kappa_\gamma < 0$ for all
  $\gamma \in R_T^+= \varpi(\prootsQ)$.
\end{thm}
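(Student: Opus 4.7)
The plan is to chain together the preceding structural results.

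For (1), $G$-invariance of $\txek$ is immediate: $\theta$ is $G$-invariant by Proposition~\ref{prop4.4}, $\xi = X_0^\#$ and $\eta$ are $G$-invariant by construction (see Lemma~\ref{lem4.2}), and $\wkappa$ is $G$-invariant by Lemma~\ref{lem4.5}. That $\txek$ is an \ACMS then follows directly from Theorem~\ref{thm1.1}, since our $\theta$ is defined exactly by formula~\eqref{eq3.1} (compare~\eqref{eq4.4}) and $\wkappa$ is given by~\eqref{eq3.6a}, matching the hypotheses of that theorem verbatim.

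For (2), Theorem~\ref{thm1.1} characterises normality of $\txe$ as: $J$ is integrable and $d\eta = \pi^*\Sigma$ for some $J$-invariant $\Sigma \in \Omega^2(M,\R)$. The first condition is the hypothesis of (2); the second, together with the $J$-invariance of $\Sigma$, is precisely what Lemma~\ref{lem4.2} provides. Hence $\txek$ is a \NACMS.

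For (3), I would apply Corollary~\ref{cor3.8}: under the normality established in~(2), $\txek$ is $c$-Sasakian \sissi $(\kappa,J)$ is \kahler and $\Sigma = c\,\Omega^\kappa$. It remains to convert the equality $\Sigma = c\,\Omega^\kappa$ into the stated condition on $h_\kappa$. By Theorem~\ref{thm6.8}(2), the \kahler assumption furnishes an $h_\kappa \in \mathsf{C}$ with $\Omega^\kappa_\fm = {\omega_{ih_\kappa}}_{\mid \fm \times \fm}$, while Notation~\ref{not4.3} gives $\Sigma_{p_0} = {\omega_{-X_0}}_{\mid \fm \times \fm}$. By $G$-invariance of both forms, $\Sigma = c\,\Omega^\kappa$ reduces to $\omega_{-X_0} = \omega_{c\,ih_\kappa}$ on $\fm \times \fm$, which by Proposition~\ref{prop6.6}(3) is equivalent to $-X_0 = c\,ih_\kappa$ in $\fz$; multiplying by $i$ and using $iX_0 = -Y_0$, this becomes $ch_\kappa = iX_0 = -Y_0$.

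The announced consequences are then immediate. Since $\mathsf{C}$ is an open cone in $\fz_\R$ and $c > 0$, $h_\kappa \in \mathsf{C}$ implies $-Y_0 = ch_\kappa \in \mathsf{C}$, so for any $\alpha \in \prootsQ$ one has $\alpha(-Y_0) > 0$; restricting to $\fz_\R$ via $\vpi$ yields $\gamma(Y_0) < 0$ for every $\gamma \in R_T^+ = \vpi(\prootsQ)$. For the equality $\gamma(Y_0) = -c\kappa_\gamma$, pick any $\alpha \in \vpi^{-1}(\gamma) \cap \prootsQ$: by Theorem~\ref{thm6.7}(2) one has $\Omega^\kappa_\fm(X_\alpha,Y_\alpha) = \kappam(Y_\alpha,Y_\alpha) = \kappa_\gamma$, and on the other hand $\omega_{ih_\kappa}(X_\alpha,Y_\alpha) = \alpha(h_\kappa) = \gamma(h_\kappa)$ since $h_\kappa \in \fz_\R$, giving $\gamma(h_\kappa) = \kappa_\gamma$ and hence $\gamma(Y_0) = -c\,\gamma(h_\kappa) = -c\kappa_\gamma$. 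No step is a serious obstacle, since Theorem~\ref{thm1.1}, Corollary~\ref{cor3.8}, and the root-theoretic material of Section~\ref{sec6} do the heavy lifting; the only delicate point is sign-tracking between $X_0 \in \fz$, $Y_0 = -iX_0 \in \fz_\R$, and the two KKS forms $\omega_{-X_0}$, $\omega_{ih_\kappa}$.
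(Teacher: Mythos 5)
Your proposal is correct and follows essentially the same route as the paper: Proposition~\ref{prop4.4}, Lemma~\ref{lem4.5} and Theorem~\ref{thm1.1} for (1), Theorem~\ref{thm1.1} plus Lemma~\ref{lem4.2} for (2), and Corollary~\ref{cor3.8} combined with Proposition~\ref{prop6.6}~(3) and the evaluation of the KKS form on $(X_\alpha,Y_\alpha)$ for (3). The sign-tracking between $X_0$, $Y_0=-iX_0$ and the forms $\omega_{-X_0}$, $\omega_{ih_\kappa}$ is handled exactly as in the paper's own argument.
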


\begin{proof}
  (1) is consequence of Proposition~\ref{prop4.4},
  Lemma~\ref{lem4.5} and Theorem~\ref{thm1.1}.
  \\
  (2) Since $J$ is a complex structure, $g_{\scriptscriptstyle{G/K}}$ is
  Hermitian. Recall that $d\eta= \pi^*\Sigma$ by
  Lemma~\ref{lem4.2} and that $\Sigma$ is $J$-invariant,
  cf.~Lemma~\ref{lem4.2}. Then, Theorem~\ref{thm1.1}
  yields the result.
  \\
  (3)  Corollary~\ref{cor3.8} says that $\txeMK$ is c-Sasakian \sissi
  $(g_{\scriptscriptstyle{G/K}},J)$ is \kahler and $c\Omega^{g_{\scriptscriptstyle{G/K}}}= \Sigma$; the later
  is equivalent (by invariance and Lemma~\ref{lem4.2}) to 
  $c\omega_{ih_\kappa} = \omega_{-X_0}$. Therefore, by Proposition~\ref{prop6.6}~(3),
  $c\Omega^{g_{\scriptscriptstyle{G/K}}} = \Sigma$ \sissi $ch_\kappa = iX_0$.  Then,
  $iX_0 = -Y_0\in \Csf$ and if $\alpha \in \vpi^{-1}(\gamma)$ we get:
\begin{align*}
c\kappa_\alpha & = c \omega_{ih_\kappa}(X_\alpha,Y_\alpha) =
c\ascal{h_\kappa}{h_\alpha} = c\alpha(h_\kappa) \\ &=
\omega_{-iY_0}(X_\alpha,Y_\alpha) = -\ascal{Y_0}{h_\alpha} =
-\alpha(Y_0) = - \gamma(Y_0).
\end{align*}
 This proves the last assertion.
\end{proof}


Let $(g_{\scriptscriptstyle{G/K}},J)$ be a $G$-invariant almost Hermitian metric on
$G/K$ as in the previous section and let $\sigma= \txeMK$ be the
\ACMS obtained in Theorem~\ref{thm6.12}~(2).

\begin{rem}
  \label{rem4.6}
  Suppose that $(g_{\scriptscriptstyle{G/K}},J)$ is Hermitian.  Using
  Lemma~\ref{lem2.3}~(3) and Corollary~\ref{cor3.9} we can deduce
  the following result: assume that $\wk$ is generic, then
  $\sigma$ is a harmonic section/map \sissi $J$ is a harmonic
  section/map.
\end{rem}

The statement of the previous remark  is, in fact,   
true for any choice of $\wk$. In order to
prove  this  result we need to compute formulas involving  the
$G$-invariant Levi-Civita connection  $\nabla$ associated
to~$g_{\scriptscriptstyle{G/\wK}}$. 

We start by introducing more notation. Recall that $X_0=iY_0$
with $Y_0 \in \fz_\R$ and
$\fm_\gamma = \boplus_{\alpha \in \varpi^{-1}(\gamma) \cap
  \proots} \fm_\alpha$ for $\gamma \in R_T^+$. The
metric $g_{\scriptscriptstyle{G/\wK}}$ is determined by the scalar product
$g_\wm$ on $\wm$ as in Lemma~\ref{lem4.5} and the scalar product
$g_\fm$ on each $\fm_\gamma$ is equal to $\kappa_\gamma B$ for
some $\kappa_\gamma >0$, cf.~Theorem~\ref{thm6.7}.
For $\gamma \in R_T^+$ and  $\alpha \in
\vpi^{-1}(\gamma)$ we set:
\begin{equation*}
  \label{eq6.14}
  X'_\alpha = \frac{1}{\sqrt{\kappa_\gamma}} X_\alpha, \quad
  Y'_\alpha = \frac{1}{\sqrt{\kappa_\gamma}} Y_\alpha, \quad
  c_\gamma= \gamma(Y_0), \quad a_\gamma=
  \frac{c_\gamma}{\kappa_\gamma} = \frac{\gamma(Y_0)}{\kappa_\gamma}.
\end{equation*}
Then the elements $(X'_\alpha,  Y'_\alpha)$, $\alpha \in
\varpi^{-1}(\gamma)$, provide an orthonormal basis 
of $\fm_\alpha$ and
\[
\bigl\{X_0 \, ;  (X'_\alpha,  Y'_\alpha) \, : \, \alpha \in
\varpi^{-1}(\gamma) \cap \proots, \gamma \in R_T^+ \bigr\}
\]
is an orthonormal basis (for $g_\wm)$ of $\wm$. Easy computations yield, {for all
    $\lambda \in \proots \cap \vpi^{-1}(\gamma)$}:
\begin{gather*}
  B(ih_\lambda,X_0) = \ascal{h_\lambda}{Y_0} = c_\gamma, \quad
  (i h_\lambda)_\wm =  c_\gamma X_0,
  \\
  [X_0,X'_\lambda] = c_\gamma  Y'_\lambda, \quad  [X_0,Y'_\lambda]
  = - c_\gamma X'_\lambda
  \\
  [X'_\lambda,Y'_\lambda] = \frac{1}{\kappa_\gamma} i h_\lambda, \quad
  [X'_\lambda,Y'_\lambda]_\wm = a_\gamma X_0, \quad
    g_\wm(X_0, [X'_\lambda,Y'_\lambda]_\wm) = a_\gamma. 
\end{gather*}

Recall that we can find  $\mathcal{U}$ containing $e \in G$  such that $\wpi : \mathcal{U} \isomto \Ut= \wpi(\mathcal{U})$
is a diffeomorphism onto the open neighbourhood $\Ut$ of $\wpzero
\in G/\Kt$. If $X \in \wm \equiv T_\wpzero (G/\Kt)$ we defined a
vector field $X^* \in \calX(\Ut)$ by $X^*_{k\wpzero} =
d\ltr_k(\wpzero).X$ for $k \in \mathcal{U}$.
We know that the connection $\nabla$ is determined by
the $\Ad(\wK)$-invariant bilinear map
$\Lambda: \wm \times \wm \to \wm$ such that
$\Lambda(X,Y)= \half[X,Y]_{\wm} +U(X,Y)$, 
cf.~\eqref{eq2.2} and~\eqref{eq2.3}.
From the definition of $\xi$ we see that $X_0^* = \xi_{\mid \Ut}$
and, since $g_{\scriptscriptstyle{G/\wK}}$ is $G$-invariant, the set
\[
\bigl\{X_0^* \, ; 
{X'_\alpha}^*,  {Y'_\alpha}^* \bigr\}_{\{\alpha \in 
\varpi^{-1}(\gamma) \cap \proots, \gamma \in R_T^+\}}
\]
is an orthonormal frame on $\Ut$,  cf.~\cite[p.~51]{Nom}. To simplify the
notation we set
\[
F_\alpha= {X'_\alpha}^*, \quad \Phi_\alpha = {Y'_\alpha}^*, \quad
\text{for $\alpha \in \varpi^{-1}(\gamma) \cap \proots$.}
\]

\begin{lem}
  \label{lem6.15}
  Let $\gamma \in  R_T^+$ and
  $\alpha \in \varpi^{-1}(\gamma) \cap \proots$.  On the open subset $\Ut$,
  one has:
   \begin{align*}
     (0) & \ \ \nabla_\xi\xi = 0, \\
    (1) & \ \ \nabla_\xi F_\alpha  = \bigl(c_\gamma
          -\frac{a_\gamma}{2}\bigr) \Phi_\alpha, 
    \quad (2) \ \ \nabla_{F_\alpha} \xi = -\frac{a_\gamma}{2}
    \Phi_\alpha,
    \\  (3) & \ \ \nabla_\xi \Phi_\alpha  = \bigl(-c_\gamma +
         \frac{a_\gamma}{2}\bigr) F_\alpha, 
    \quad (4) \ \ \nabla_{\Phi_\alpha} \xi = \frac{a_\gamma}{2}
    F_\alpha,
    \\
   (5) & \ \ \nabla_{F_\alpha} \Phi_\alpha  = \frac{a_\gamma}{2}\xi,
    \quad (6)\ \ \nabla_{\Phi_\alpha} F_\alpha = -\frac{a_\gamma}{2}
    \xi,
    \\
     (7) & \ \ \nabla_{F_\alpha} F_\alpha  = 0,
    \quad (8) \ \ \nabla_{\Phi_\alpha} \Phi_\alpha = 0.
  \end{align*}
 \end{lem}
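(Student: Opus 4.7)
The plan is to evaluate the $G$-invariant Levi-Civita connection $\wnabla$ through Nomizu's formula $\wnabla_{X^*}Y^* = \Lambda(X,Y)^*$, with $\Lambda(X,Y) = \half[X,Y]_\wm + U(X,Y)$ and $U$ determined by $2\wkappa_\wm(U(X,Y),Z) = \wkappa_\wm(X,[Z,Y]_\wm) + \wkappa_\wm(Y,[Z,X]_\wm)$ as in \eqref{eq2.2}--\eqref{eq2.3}. Since $\xi = X_0^*$, $F_\alpha = {X'_\alpha}^*$ and $\Phi_\alpha = {Y'_\alpha}^*$, each of the nine identities reduces to computing $\Lambda$ on a specific pair of elements of $\wm$, and one reads off the result term by term.

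First I would assemble the brackets that enter $\Lambda$. Using the identities collected just before the lemma, one has $[X_0, X'_\alpha] = c_\gamma Y'_\alpha$, $[X_0, Y'_\alpha] = -c_\gamma X'_\alpha$, $[X'_\alpha, X'_\alpha] = [Y'_\alpha, Y'_\alpha] = 0$, and $[X'_\alpha, Y'_\alpha]_\wm = a_\gamma X_0$. This immediately yields the $\half[\cdot,\cdot]_\wm$ contribution to $\Lambda$ in every case, and gives at once the vanishing of $\wnabla_\xi \xi$ in $(0)$, since $[X_0, X_0] = 0$ and every test bracket $[Z,X_0]_\wm$ lies in $\fm$, so $U(X_0, X_0) = 0$.

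Then I would compute each $U(X,Y)$ by testing against the orthonormal basis $\bigl\{X_0\bigr\} \cup \bigl\{X'_\beta, Y'_\beta\bigr\}$. Cross-components of $U(X,Y)$ along basis vectors lying outside $\fm_\alpha \boplus \R X_0$ vanish by combining $[\fm_\lambda, \fm_\mu] \subset \fm_{\lambda+\mu} \boplus \fm_{\lambda-\mu}$ (see~\eqref{eq1.6a}) with the $B$-orthogonality of distinct $\fm_\delta$ and the explicit form of $\wkappa_\wm$ given by Lemma~\ref{lem4.5}; the $X_0$-component of $U(X,Y)$ vanishes whenever the relevant test brackets stay in $\fm$, which occurs for $U(X_0, X'_\alpha)$, $U(X_0, Y'_\alpha)$, $U(X'_\alpha, X'_\alpha)$ and $U(Y'_\alpha, Y'_\alpha)$. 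The surviving scalars are then short evaluations: testing $U(X_0, X'_\alpha)$ against $Z = Y'_\alpha$ gives $(c_\gamma - a_\gamma)/2$, which combined with $\half c_\gamma Y'_\alpha$ produces $(1)$ and, symmetrically, $(2)$; the swap $X'_\alpha \leftrightarrow Y'_\alpha$ delivers $(3)$ and $(4)$. For $(5)$--$(6)$, one checks that in $\Lambda(X'_\alpha, Y'_\alpha)$ the two potential $X_0$-contributions to $U$, namely $\wkappa_\wm(X'_\alpha, [X_0, Y'_\alpha]_\wm) = -c_\gamma$ and $\wkappa_\wm(Y'_\alpha, [X_0, X'_\alpha]_\wm) = c_\gamma$, cancel, forcing $U(X'_\alpha, Y'_\alpha) = 0$ and hence $\Lambda(X'_\alpha, Y'_\alpha) = (a_\gamma/2) X_0$. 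Finally $(7)$--$(8)$ come from $[X'_\alpha,X'_\alpha] = 0$ together with the vanishing of $U(X'_\alpha, X'_\alpha)$ (both test contributions are $\wkappa_\wm(X'_\alpha, [Z, X'_\alpha]_\wm)$, which is antisymmetric in $Z \leftrightarrow X'_\alpha$ on the $X'_\alpha$-axis and zero elsewhere), and likewise for $\Phi_\alpha$.

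The main obstacle is the systematic ruling out of stray components of $U$ along $\fm_\beta$ with $\beta \neq \pm \alpha$: this is not conceptually deep, but requires careful bookkeeping with \eqref{eq1.6a}, the $B$-orthogonality of distinct weight spaces, and the explicit diagonal form of $\wkappa_\wm$ on $\bigoplus_\gamma \fm_\gamma$. Once those vanishings are in place, reading the remaining scalars off Nomizu's formula and applying $\wnabla_{X^*}Y^* = \Lambda(X,Y)^*$ delivers all nine identities simultaneously.
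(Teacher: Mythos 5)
Your proposal follows the paper's own proof essentially verbatim: both compute $\wnabla$ on $\Nt$ via Nomizu's formula $\Lambda(X,Y)=\half[X,Y]_\wm+U(X,Y)$, determine $U$ by testing $2\wkappa_\wm(U(X,Y),Z)$ against the orthonormal basis $\{X_0,X'_\lambda,Y'_\lambda\}$, and eliminate stray components using $[\fm_\alpha,\fm_\lambda]\subset\fm_{\alpha+\lambda}+\fm_{\alpha-\lambda}$ together with the orthogonality of distinct weight spaces (the paper additionally notes $U(X,Y)=U(Y,X)$ to reduce to the odd-numbered cases, which is the same reduction you invoke by ``symmetry''). The key scalars you exhibit, namely the $Y'_\alpha$-component $\half(c_\gamma-a_\gamma)$ of $U(X_0,X'_\alpha)$ and the cancellation $-c_\gamma+c_\gamma=0$ in $U(X'_\alpha,Y'_\alpha)$, are exactly those in the paper's worked case, so the argument is correct and essentially identical.
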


 \begin{proof}
  Since $U(X,Y)= U(Y,X)$, we have $\Lambda(Y,X) = \Lambda(X,Y)
  - [X,Y]_\wm$ and it suffices to check the formulas $(0), (1), (3),
  (5), (7), (8)$. Notice that to determine $U(X,Y)$ we need to compute the value
  of 
$$
f_{X,Y}(Z) = 2g_\wm(U(X,Y),Z) = g_\wm(X,[Z,Y]_\wm) + g_\wm(Y,[Z,X]_\wm)
   $$
   for $Z= X_0, X'_\lambda,Y'_\lambda$, $\lambda \in \proots$.
 The computation in the different cases is straightforward; we
 only give it in case~(1). Recall that
 $g_\wm(X_0,\wm) = g_\wm(\fm_\lambda,\fm_\mu) =0$ (for
 $\lambda \ne \mu$) and
 $[\fm_\alpha,\fm_\lambda] \subset \fm_{\alpha + \lambda} +
 \fm_{\alpha -\lambda}$ if $\alpha \neq \pm \lambda$,
 cf.~\eqref{eq1.6a}; this implies that if
 $V \in [\fm_\alpha,\fm_\lambda]$, we have
 $$g_\wm(X_0,V_\wm)= g_\wm(X_\alpha',V_\wm)=
 g_\wm(Y_\alpha',V_\wm) = 0.$$

  $(1)$ One easily sees that  $f_{X_0,X'_\alpha}(X_0)=
  f_{X_0,X'_\alpha}(X'_\alpha) = 0$. Let us give the details for
  the cases $Z= Y'_\alpha$ and  $Z=X'_\lambda$, $\lambda \neq \alpha$.
  One has
\begin{align*}
f_{X_0,X'_\alpha}(Y'_\alpha) & = g_\wm(X_0,[Y'_\alpha,X'_\alpha]_\wm) +
g_\wm(X'_\alpha,[Y'_\alpha,X_0]_\wm) \\ & = -a_\gamma +
  g_\wm(X'_\alpha,c_\gamma X'_\alpha) = -a_\gamma + c_\gamma. 
\end{align*}
If $Z = X'_\lambda$, $\lambda \neq \alpha$, we have
$[X'_\lambda,X'_\alpha] \subset \fm_{\lambda + \alpha} +
\fm_{\alpha - \lambda}$ and
$[X'_\lambda,X_0] \in \fm_\lambda$. This implies
$f_{X_0,X'_\alpha}(X'_\lambda)=0$.  A similar calculation shows
that $f_{X_0,X'_\alpha}(Y'_\lambda) = 0$. It follows that
$U(X_0,X'_\alpha) = \half (-a_\gamma + c_\gamma)
Y'_\alpha$. Since
$\half [X_0,X'_\alpha]_\wm =  \half c_\gamma Y'_\alpha$ we obtain
$\Lambda(X_0,X'_\alpha) = (c_\gamma - \half
a_\gamma)Y'_\alpha$. Consequently,
$\nabla_\xi F_\alpha = (c_\gamma - \half a_\gamma) \Phi_\alpha$.
\end{proof}

Recall that the almost complex structure $J$ on $G/K$ is given by
$J_\fm X_\lambda= \epsilon(\gamma) Y_\lambda$ and
$J_\fm(Y_\lambda)= - \epsilon(\gamma)X_\lambda$,
$\epsilon(\gamma)=\pm 1$, for all
$\lambda \in \vpi^{-1}(\gamma) \cap \proots$, see
Theorem~\ref{thm6.7}.

\begin{lem}
  \label{lem4.8}
  Let 
  $\alpha \in \rootsQ \cap \proots$ and $\gamma = \vpi(\alpha)$. Then,
  $\theta F_\alpha = \epsilon(\gamma) \Phi_\alpha$ and
  $\theta \Phi_\alpha= - \epsilon(\gamma) F_\alpha$ on~$\Ut$.
\end{lem}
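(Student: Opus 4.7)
The strategy is to reduce the identities to the single point $\wpzero$ via $G$-invariance, and then to evaluate the three factors composing $\theta$ term by term.

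First, I would observe that $\theta$ is $G$-invariant by Proposition~\ref{prop4.4}, and that the vector fields $F_\alpha = {X'_\alpha}^*$, $\Phi_\alpha = {Y'_\alpha}^*$, although only defined on the neighborhood $\Nt$, are left $G$-invariant where they are defined: if $k, gk \in N$ one has $d\ltr_g(k\wpzero).F_\alpha|_{k\wpzero} = d\ltr_g(k\wpzero)\,d\ltr_k(\wpzero)X'_\alpha = d\ltr_{gk}(\wpzero)X'_\alpha = F_\alpha|_{gk\wpzero}$, and similarly for $\Phi_\alpha$. Consequently, both sides of the two stated equations are $G$-invariant vector fields on $\Nt$, so it suffices to verify them at~$\wpzero$.

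At the base point, under the identification $T_\wpzero\Mt \equiv \wm = \R X_0 \perp \fm$, one has $F_\alpha|_\wpzero = X'_\alpha$ and $\Phi_\alpha|_\wpzero = Y'_\alpha$. Since $\alpha \in \rootsQ$, both $X'_\alpha$ and $Y'_\alpha$ belong to $\fm$; by Lemma~\ref{lem4.2}, $\eta_\wpzero(U) = B(X_0,U)A$, and $B(X_0,\fm)=0$ ensures that these two vectors lie in the horizontal space $H_\wpzero\Mt = \Ker \eta_\wpzero$. Moreover, under the above identification the projection $d\pi(\wpzero):\wm\to \fm \equiv T_{p_0}M$ is the orthogonal projection along $\R X_0$, so $d\pi(\wpzero)$ acts as the identity on $\fm$, and its inverse $\psi(\wpzero):T_{p_0}M \to H_\wpzero\Mt$ is simply the inclusion $\fm \hookrightarrow \wm$.

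Now I apply the defining formula $\theta_\wpzero = \psi(\wpzero)\circ J_{p_0}\circ d\pi(\wpzero)$ from~\eqref{eq4.4}, together with the action of $J_{p_0} \equiv J_\fm$ recalled in Theorem~\ref{thm6.7}~(1), namely $J_\fm X_\alpha = \epsilon(\gamma) Y_\alpha$ and $J_\fm Y_\alpha = -\epsilon(\gamma) X_\alpha$. Since $J_\fm$ commutes with the scalar factor $1/\sqrt{\kappa_\gamma}$ this yields $\theta_\wpzero(X'_\alpha) = \epsilon(\gamma)Y'_\alpha$ and $\theta_\wpzero(Y'_\alpha) = -\epsilon(\gamma)X'_\alpha$, which are exactly $\epsilon(\gamma)\Phi_\alpha|_\wpzero$ and $-\epsilon(\gamma)F_\alpha|_\wpzero$. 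By the reduction in the first paragraph the lemma follows on all of~$\Nt$. There is no real obstacle here: the only point requiring a little care is the verification that $F_\alpha$ and $\Phi_\alpha$ are horizontal at $\wpzero$, so that $\psi$ can indeed be inverted against them.
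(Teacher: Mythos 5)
Your proposal is correct and follows essentially the same route as the paper's proof: reduce to the base point $\wpzero$ via the invariance of $\theta$ and the translation property $F_\alpha|_{k\wpzero}=d\ltr_k(\wpzero)X'_\alpha$, identify $d\pi(\wpzero)$ with the orthogonal projection $\wm\to\fm$ and $\psi(\wpzero)$ with the inclusion, and then apply $J_\fm X'_\alpha=\epsilon(\gamma)Y'_\alpha$. The extra check that $X'_\alpha,Y'_\alpha$ are horizontal at $\wpzero$ is a nice point of care but is already implicit in the paper's identification $H_\wpzero\Mt\equiv\fm$.
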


\begin{proof}
  We make the calculation for $F_\alpha$, the same method will
  give $\theta \Phi_\alpha$.  Recall first that the projection
  $d\pi(\wpzero) : T_\wpzero (G/\Kt) \sto T_\pzero (G/K)$ can be
  identified with the orthogonal projection from $\wm$ onto
  $\fm$. Its inverse, $\psi(\wpzero)$, then identifies with the
  inclusion $\fm \ito \wm$. In other terms, the orthogonal
  decomposition (with respect to~$g_{\scriptscriptstyle{G/\wK}}$)
  $T_\wpzero (G/\Kt)  = H_\wpzero(G/\Kt) \boplus V_\wpzero (G/\Kt)$ identifies
  with the orthogonal decomposition $\wm = \fm \obot \R X_0$.
  \\
  The vector fields $F_\alpha$ and $\Phi_\alpha$ are determined
  by $(F_\alpha)_{\mid \wpzero} = X'_\alpha$ and
  $(\Phi_\alpha)_{\mid \wpzero} = Y'_\alpha$ in
  $\wm \equiv T_\wpzero (G/\Kt)$.  By the previous remarks,
  $d\pi(\wpzero).(F_\alpha)_{\mid \wpzero} = X'_\alpha$ and
  $\psi(\wpzero).Y'_\alpha = Y'_\alpha= (\Phi_\alpha)_{\mid
    \wpzero}$.  Let $k\in \mathcal{U}$. We get   $\theta_{k\wpzero} (F_\alpha)_{\mid k\wpzero} =
  \theta_{k\wpzero} d\ltr_k(\wpzero).(F_\alpha)_{\mid \wpzero} =
  d\ltr_k(\wpzero). \theta_{\wpzero}(F_\alpha)_{\mid \wpzero}$ by invariance of $\theta$
  and the definition of $F_\alpha$. 
  By   definition of $\theta$  we have 
$\theta_{\wpzero}(F_\alpha)_{\mid \wpzero} =
\psi(\wpzero)(J_{p_0} d\pi(\wpzero).(F_\alpha)_{\mid
  \wpzero})$.
Thus, through the previous identifications we obtain:
\begin{equation*}
\psi(\wpzero)(J_{p_0} d\pi(\wpzero).(F_\alpha)_{\mid
  \wpzero})  = \psi(\wpzero) (J_\pzero X'_\alpha)  = \epsilon(\gamma)
\psi(\wpzero)(Y'_\alpha) = \epsilon(\gamma) Y'_\alpha=
\epsilon(\gamma) (\Phi_\alpha)_{\mid
  \wpzero}. 
\end{equation*}
Hence,
$\theta_{k\wpzero} (F_\alpha)_{\mid k\wpzero} =  \epsilon(\gamma)
d\ltr_k(\wpzero).(\Phi_\alpha)_{\mid
  \wpzero} = \epsilon(\gamma) (\Phi_\alpha)_{\mid k\wpzero}$ and
we get $\theta F_\alpha
= \epsilon(\gamma) \Phi_\alpha$ on~$\Ut$.
\end{proof}

Observe that if two $G$-invariant vector fields coincide on
$\Ut$, they are equal on~$G/\Kt$. This remark and
Lemma~\ref{lem6.15}~$(0)$ imply that $\nabla_\xi\xi = 0$
(actually, $\xi$ is a Killing vector field). Furthermore, to
compute the invariant vector fields $\delta \theta$ and
$\rough \xi$ (see Lemma~\ref{lem2.3} and
Proposition~\ref{prop4.4}) it suffices to do it on $\Ut$.

\begin{thm}
  \label{prop6.16}
  Let $\txeMK$ be the \ACMS on $G/\Kt$ defined in
  Theorem~\ref{thm6.12}~(2). For $\gamma \in R_T^+$, set
  $n_\gamma= |\varpi^{-1}(\gamma) \cap \proots|$. 
   Under the previous notation we have:
  \begin{equation*}
\nabla_\xi \xi = 0, \quad    \delta \theta   = \Bigl(\sum_{\gamma \in R_T^+}
   \epsilon(\gamma) \, n_\gamma  \frac{\gamma(Y_0)}{\kappa_\gamma}\Bigl) \xi,
\quad
 \nabla^*\nabla  \xi   = \half\Bigl(\sum_{\gamma \in R_T^+}
 n_\gamma   \frac{\gamma(Y_0)^2}{\kappa_\gamma^2}\Bigr) \xi.
\end{equation*}
Assume that $J$ is a complex structure, then the \NACS  $\sigma=\txeMK$ is a harmonic section/map \sissi $J$ is a
  harmonic section/map.
 \end{thm}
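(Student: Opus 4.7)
The plan is to work locally on the open neighbourhood $\Nt$ of $\wpzero$, where Lemma~\ref{lem6.15} gives explicit formulas for the Levi-Civita connection in the orthonormal frame $\{\xi; F_\alpha, \Phi_\alpha\}$ indexed by $\alpha \in \varpi^{-1}(\gamma) \cap \proots$, $\gamma \in R_T^+$. Since $\xi$, $\theta$ and $\wkappa$ are all $G$-invariant, Lemma~\ref{lem2.3} guarantees that both $\delta\theta$ and $\wnabla^*\wnabla \xi$ are $G$-invariant vector fields; consequently, it suffices to compute their values on $\Nt$ and the global formulas follow by $G$-equivariance. The identity $\wnabla_\xi \xi=0$ is immediate from Lemma~\ref{lem6.15}~$(0)$ combined with invariance (it also reflects the fact, already recorded, that $\xi$ is Killing).

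To compute $\delta\theta = \sum_i \bigl(\wnabla_{E_i}\theta(E_i) - \theta(\wnabla_{E_i}E_i)\bigr)$ on $\Nt$, I take $E_i \in \{\xi; F_\alpha, \Phi_\alpha\}$. Since $\theta\xi=0$ and $\wnabla_\xi \xi=0$, the contribution of $\xi$ vanishes. By Lemma~\ref{lem6.15}~$(7), (8)$ the terms $\wnabla_{F_\alpha}F_\alpha$ and $\wnabla_{\Phi_\alpha}\Phi_\alpha$ vanish, eliminating the second piece. Using Lemma~\ref{lem4.8} to write $\theta F_\alpha = \epsilon(\gamma)\Phi_\alpha$ and $\theta\Phi_\alpha = -\epsilon(\gamma)F_\alpha$, together with Lemma~\ref{lem6.15}~$(5), (6)$, each pair $(F_\alpha,\Phi_\alpha)$ contributes $\epsilon(\gamma)\frac{a_\gamma}{2}\xi + \epsilon(\gamma)\frac{a_\gamma}{2}\xi = \epsilon(\gamma)a_\gamma\xi$. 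Summing over $\alpha \in \varpi^{-1}(\gamma)\cap\proots$ produces a factor $n_\gamma$, and then summing over $\gamma \in R_T^+$ and recalling $a_\gamma = \gamma(Y_0)/\kappa_\gamma$ yields the stated formula for $\delta\theta$.

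The computation of $\wnabla^*\wnabla \xi$ follows the same scheme. Using the formulas of Lemma~\ref{lem6.15}, the vanishings $\wnabla_\xi\xi = \wnabla_{F_\alpha}F_\alpha = \wnabla_{\Phi_\alpha}\Phi_\alpha = 0$ kill every $\wnabla_{\wnabla_{E_i}E_i}\xi$ term, so $\wnabla^*\wnabla\xi = -\sum_i \wnabla_{E_i}\wnabla_{E_i}\xi$ on $\Nt$. From $\wnabla_{F_\alpha}\xi = -\frac{a_\gamma}{2}\Phi_\alpha$ and $\wnabla_{F_\alpha}\Phi_\alpha = \frac{a_\gamma}{2}\xi$ one gets $-\wnabla_{F_\alpha}\wnabla_{F_\alpha}\xi = \frac{a_\gamma^2}{4}\xi$, and symmetrically for $\Phi_\alpha$, giving a combined $\frac{a_\gamma^2}{2}\xi$ per pair. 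Summing over the indices yields the announced formula $\wnabla^*\wnabla\xi = \half\bigl(\sum_\gamma n_\gamma \gamma(Y_0)^2/\kappa_\gamma^2\bigr)\xi$.

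Finally, when $J$ is a complex structure on $M$, Theorem~\ref{thm6.12}~(2) ensures that $\sigma = \txek$ is normal, so the hypotheses of Corollary~\ref{cor3.9} are satisfied precisely because the three formulas just proved show $\delta\theta \in \R\xi$ and $\wnabla^*\wnabla\xi \in \R\xi$. Corollary~\ref{cor3.9} then gives directly that $\sigma$ is a harmonic section (resp.~harmonic map) if and only if $J$ is a harmonic section (resp.~harmonic map) on $(M,\kappa,J)$. The main (already completed) obstacle is the bookkeeping in steps (2) and (3); everything else is a straightforward assembly of Lemma~\ref{lem6.15}, Lemma~\ref{lem4.8}, Lemma~\ref{lem2.3} and Corollary~\ref{cor3.9}.
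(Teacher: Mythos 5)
Your proposal is correct and follows essentially the same route as the paper: compute $\delta\theta$ and $\wnabla^*\wnabla\xi$ on the neighbourhood $\Nt$ in the frame $\{\xi;F_\alpha,\Phi_\alpha\}$ using Lemmas~\ref{lem6.15} and \ref{lem4.8}, extend by $G$-invariance (Lemma~\ref{lem2.3}), and conclude via Corollary~\ref{cor3.9}. The per-pair contributions $\epsilon(\gamma)a_\gamma\xi$ and $\tfrac{1}{2}a_\gamma^2\xi$ match the paper's computation exactly.
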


 \begin{proof}
   Since $\theta\xi= \nabla_\xi\xi = 0$, the invariant vector fields
 $\delta \theta$ and $\nabla^*\nabla \xi$ have the following expressions
 on~$\Ut$:
\begin{gather*}
    \delta \theta   = \Bigl(\sum_{\alpha \in \proots \cap \Qsf}
    \nabla_{F_\alpha}\theta F_\alpha +
    \nabla_{\Phi_\alpha}\theta \Phi_\alpha \Bigr) + \nabla_\xi
    \theta \xi = \sum_{\gamma \in R_T^+} \, \sum_{\alpha \in
      \varpi^{-1}(\gamma) \cap \proots}
    \Bigl(\nabla_{F_\alpha}\theta F_\alpha +
    \nabla_{\Phi_\alpha}\theta \Phi_\alpha \Bigr), \\
 \nabla^*\nabla  \xi
     =  -\sum_{\gamma \in R_T^+} \, \sum_{\alpha \in
     \varpi^{-1}(\gamma) \cap \proots}
   \Bigl\{  \Bigl(\nabla_{F_\alpha}(\nabla_{F_\alpha}\xi)  - 
  \nabla_{\nabla_{F_\alpha}F_\alpha} \xi \Bigr) 
   + \Bigl(\nabla_{\Phi_\alpha}(\nabla_{\Phi_\alpha}\xi) -
  \nabla_{\nabla_{\Phi_\alpha}\Phi_\alpha} \xi\Bigr) \Bigr\}.
\end{gather*}
Let $\alpha \in \varpi^{-1}(\gamma) \cap \proots$. By
Lemma~\ref{lem4.8} and Lemma~\ref{lem6.15} we get
\[
\nabla_{F_\alpha}\theta F_\alpha +
    \nabla_{\Phi_\alpha}\theta \Phi_\alpha = \epsilon(\gamma)\Bigl(\nabla_{F_\alpha}\Phi_\alpha -
    \nabla_{\Phi_\alpha} F_\alpha\Bigr) = 
    \epsilon(\gamma)\Bigl(\half a_\gamma \xi  +\half a_\gamma \xi\Bigr) =
    \epsilon(\gamma) a_\gamma \xi.
\]
Since $a_\gamma = \gamma(Y_0)/\kappa_\gamma$ we have proved the
desired formula for $\delta \theta$.

The  computation of  $\nabla^*\nabla  \xi$ is similar. Indeed,
 \begin{align*}
 \phantom{,} &\Bigl(\nabla_{F_\alpha}(\nabla_{F_\alpha}\xi)  -
  \nabla_{\nabla_{F_\alpha}F_\alpha} \xi \Bigr) +
               \Bigl(\nabla_{\Phi_\alpha}(\nabla_{\Phi_\alpha}\xi)
               - \nabla_{\nabla_{\Phi_\alpha}\Phi_\alpha} \xi
               \Bigr) 
   \\
    & =
 \Bigl(\nabla_{F_\alpha} (-\frac{a_\gamma}{2}
  \Phi_\alpha) + \nabla_{\Phi_\alpha} (\frac{a_\gamma}{2}
  F_\alpha)\Bigr)  = 
  \Bigl(-\frac{a^2_\gamma}{4} \xi - \frac{a^2_\gamma}{4}\Bigr)
     \xi
= -\half   a_\gamma^2 \xi. 
\end{align*}
The formula for $\rough \xi$ follows. The last claim is a consequence of Corollary~\ref{cor3.9}.
 \end{proof}

 Assume that $J$ is a complex
 structure on $G/K$. Let $\sigma= \txeMK$ be the \NACMS on $G/\Kt$
 given by Theorem~\ref{thm6.12}.  
Recall that  conditions for $(g_{\scriptscriptstyle{G/K}},J)$ to be \kahler are given in
Theorem~\ref{thm6.8}, either in terms of the scalars
$\kappa_\gamma$ or of the KKS form defining the \kahler form
$\Omega^{g_{\scriptscriptstyle{G/K}}}$.

\begin{thm}
  \label{thm6.17}
 Assume that $(g_{\scriptscriptstyle{G/K}},J)$ is \kahler on $N=G/K$ and  endow $M=G/\Kt$ with the
 \NACMS $\sigma$.
  Then, this structure is a harmonic map.
\end{thm}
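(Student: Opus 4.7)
The proof should follow almost immediately by combining the two main tools already developed in the paper: the explicit computation of $\delta\theta$ and $\wnabla^*\wnabla\xi$ in Theorem~\ref{prop6.16}, together with the abstract criterion of Corollary~\ref{cor3.9}.

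My plan is as follows. First, I would observe that the hypotheses of Corollary~\ref{cor3.9} are satisfied in the present situation. Indeed, Theorem~\ref{prop6.16} produces the two explicit identities
\[
\delta\theta = \Bigl(\sum_{\gamma \in R_T^+} \epsilon(\gamma)\, n_\gamma\, \frac{\gamma(Y_0)}{\kappa_\gamma}\Bigr)\xi,
\qquad
\wnabla^*\wnabla \xi = \tfrac{1}{2}\Bigl(\sum_{\gamma \in R_T^+} n_\gamma\, \frac{\gamma(Y_0)^2}{\kappa_\gamma^2}\Bigr)\xi,
\]
which show directly that $\delta\theta \in \R\xi$ and $\wnabla^*\wnabla\xi \in \R\xi$, i.e.~both are proportional to the Reeb vector field with \emph{constant} proportionality coefficients.

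Second, by the assumption $(\kappa, J)$ is K\"ahler, and by Theorem~\ref{thm6.12}~(2) the triple $\txek$ is genuinely a \NACMS on $\Mt$, so all the hypotheses of Corollary~\ref{cor3.9} are met. Applying part~(3) of that corollary then immediately gives the conclusion that $\sigma$ is a harmonic map. Alternatively, one could invoke part~(2) of Corollary~\ref{cor3.9} together with the classical fact, recalled just before Corollary~\ref{cor3.9}, that any K\"ahler structure $J$ is automatically a harmonic map (since $\nabla J = 0$ forces both $[\nabla^*\nabla J, J] = 0$ and $\nabla_{F_i} J = 0$ for all~$i$).

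There is no real obstacle: the work has already been done in the preceding results. The only thing to double-check is that Theorem~\ref{prop6.16} is being invoked in its full generality (it does not require $(\kappa,J)$ to be K\"ahler, only that $J$ be a complex structure to guarantee normality), so that its output $\delta\theta, \wnabla^*\wnabla\xi \in \R\xi$ is available as an input to Corollary~\ref{cor3.9}(3) once the K\"ahler hypothesis is added. Hence the proof reduces to a one-line citation of Theorem~\ref{prop6.16} and Corollary~\ref{cor3.9}(3).
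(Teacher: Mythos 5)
Your proposal is correct and coincides with the paper's own argument: the paper likewise deduces the result from Theorem~\ref{prop6.16} (giving $\delta\theta,\wnabla^*\wnabla\xi \in \R\xi$) together with Corollary~\ref{cor3.9} and the fact that a K\"ahler $J$ is a harmonic map. Your remark that Theorem~\ref{prop6.16} only needs $J$ integrable, not K\"ahler, is a correct and worthwhile double-check.
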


\begin{proof}
  We know that $J$ is a harmonic map when $(g_{\scriptscriptstyle{G/K}},J)$ is
  \kahler. Therefore the result is consequence of
  Theorem~\ref{prop6.16} and Corollary~\ref{cor3.9}.
\end{proof}

\begin{rems}
  \label{rem6.18}
  (0) When $\wk$ is generic, the proof of Theorem~\ref{thm6.17}
  is shorter since, thanks to Lemma~\ref{lem2.3}, we do not need
  Lemma~\ref{lem6.15} to apply Corollary~\ref{cor3.9}.
  
  Let $\mathsf{C}$ be the Weyl chamber defining the
  complex structure $J$.
  \\
\noindent   (1) It is not difficult to construct a harmonic \NACMS $\txeMK$
  on $G/\Kt$, as in Theorem~\ref{thm6.17}, which is not
  c-Sasakian.  By Theorem~\ref{thm6.12} it suffices to choose the
  \kahler metric $g_{\scriptscriptstyle{G/K}}$ such that, for all $c >0$ there exists
  $\gamma \in R_T^+$ such that $c\kappa_\gamma \neq
  \gamma(iX_0)$, that is to say choose  $h_\kappa \in
  \mathsf{C}$ such that $-Y_0 \notin \R_+ h_\kappa$.  (This is
  always the case if one starts with a choice of $\mathsf{C}$
  such that
   $iX_0=-Y_0 \notin \mathsf{C}$.)
  \\
  (2) Recall that $g_{\scriptscriptstyle{G/K}}$ is \kahler-Einstein when the form
  $\Omega_\fm$ is equal (up to a scalar multiple $c >0$)
  to $c\omega_{ih_\rho}$ with
  $\rho = \half\sum_{\alpha \in \prootsQ} \alpha$ (see
  Theorem~\ref{thm6.8}). In this case
  $k_\gamma = c\, \alpha(h_\rho)$ for all
  $\alpha \in \varpi^{-1}(\gamma)$. The structure $\txeMK$ is then
  Sasakian \sissi $iX_0=-Y_0= c \, h_\rho$. We can always
  produce a principal $\bbS^1$-bundle $\pi : G/\wK \sto G/K$ and a
  harmonic \NACMS $\txeMK$ on $G/\wK$ which is not Sasakian.
  Indeed, choose $\wz$ such that $\fz= \R X_0 \obot \wz$ with
  $-Y_0 \notin \R_+^* h_{\sigma}$ and define $\wk$ to be the
  orthogonal (with respect to~$B$) of $\R X_0$ in $\fk$.
  \\
  (3) In Theorem~\ref{thm6.17} we assume that $g_{\scriptscriptstyle{G/K}}$ is a
  \kahler metric in order to have the harmonicity of the
  section/map~$J$. A natural question is: what is the weakest
  hypothesis on $g_{\scriptscriptstyle{G/K}}$ to ensure that   $J$ is a
  harmonic section/map? One can observe that on a generalised flag manifold
$G/K$  equipped  with a complex structure $J$, then
\[
\text{$(g_{\scriptscriptstyle{G/K}},J)$ \kahler $\iff$ $(g_{\scriptscriptstyle{G/K}},J)$ quasi-\kahler
  (i.e.~$(1,2)$-symplectic) $\iff$
  $(g_{\scriptscriptstyle{G/K}},J)$ nearly-\kahler.} 
\]
This follows from \cite[Theorem~9.15 \&
Theorem~9.17]{WG}. Furthermore, every $G$-invariant almost
Hermitian metric on $G/K$ is semi-\kahler by
\cite[Theorem~8.9~(ii)]{WG}. 
\end{rems}

We now study the harmonicity of the Reeb vector field $\xi$.  Let
$(M,g_M)$ be a compact Riemannian manifold 
and let $\nabla$ be the associated Levi-Civita
connection. Denote by $UM$ the unit tangent bundle defined for
$x \in M$ by
$U_xM= \{ v \in T_xM \, : \,( g_M)_x(v,v) = 1\}$.  By
definition, a unit vector field $V$ on $M$ is a section of
$UM$ (thus it defines a vector field on $M)$.  We endow the
tangent bundle $TM$ with a g-natural metric as 
defined in \cite[5.1]{DP} (recall that the Sasaki metric is an
example of such a metric \cite[(5.6)]{DP}). The unit tangent
bundle is equipped with the restriction of this metric.  One
says that a unit vector field $V$ is a harmonic vector field,
resp.~a harmonic map, if $V$ is a harmonic as a section of
$UM$, resp.~as a map from $M$ to $UM$. Denote by
${R}(X,Y) 
= \nabla^2_{X,Y} - \nabla^2_{Y,X}$ the curvature tensor and set
\[
S(V) = \trace_{g_M} ({R}(\nabla_. V, V) .)=   
\sum_i{R}(\nabla_{E_i} V, V)E_i,
\]
where $(E_i)_i$ is an orthonormal frame for $g_M$. Combining
\cite[Theorem 2]{ACP}, \cite[Lemma 7.1 \& Proposition~8.1]{Str},
\cite[Corollary 2 \& Proposition~3]{ACP} and \cite[page 85]{HY},
we have the following characterisation of the harmonicity of $V$.

\begin{thm}
  \label{thm6.18} \label{thm6.18a}
{\rm (1)}  The unit vector field $V$ is a harmonic vector field \sissi
  $\nabla^*\nabla V= f V$ for some $f \in \calC^\infty(M,\R)$ and
  in this case one has $f= |\nabla V|^2$.
  \\
    {\rm (2)} Endow $UM$ with the Sasaki metric.  The unit vector
  field $V$ is a harmonic map \sissi $V$ is a harmonic vector
  field and $S(V)=0$. 
  \\
  {\rm (3)} Assume that the unit vector field $V$ is Killing.
  Then $V$ is a harmonic map for the Sasaki metric \sissi $V$ is a
  harmonic map for any g-natural metric.
\end{thm}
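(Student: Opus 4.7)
The theorem is essentially a compilation of known results from the literature, so my plan is to match each clause of the statement to a precise reference and verify the hypotheses are met, rather than reprove each characterisation from scratch.

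For part (1), I would appeal to \cite[Theorem 2]{ACP} (a result going back to Wiegmink--Gil-Medrano): the first variation of the energy along a vertical path $V_t$ of sections of $U\Mt$ forces the tension field at $V$ to be orthogonal to $TU_x\Mt$ for every $x$. Since the fibres of $U\Mt$ are unit spheres in $T_x\Mt$, the tangent space to the fibre at $V_x$ is $V_x^\perp \subset T_x\Mt$, so the harmonic section equation is precisely that $\wnabla^*\wnabla V$ be proportional to $V$ pointwise, i.e.~$\wnabla^*\wnabla V = fV$ for some smooth $f$. To identify $f$, differentiate $\wkappa(V,V)=1$ twice and trace: this yields $\wkappa(\wnabla^*\wnabla V,V) = |\wnabla V|^2$ (this is the content of \cite[Lemma~7.1]{Str}), so $f=|\wnabla V|^2$, as recorded in \cite[Proposition~8.1]{Str}.

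For part (2), the harmonic map equation consists of the harmonic section equation plus the vanishing of the horizontal component of the full tension field. A direct computation (carried out in \cite[Corollary~2]{ACP} for the Sasaki metric on $U\Mt$) identifies this horizontal piece with the quantity $S(V) = \sum_i \widetilde{R}(\wnabla_{E_i}V,V)E_i$. Hence $V$ is a harmonic map iff it is a harmonic unit vector field (part (1)) and $S(V)=0$.

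For part (3), I would invoke \cite[Proposition~3]{ACP} together with \cite[page 85]{HY}: when $V$ is Killing, the Lie-derivative contributions that distinguish a g-natural metric from the Sasaki metric all vanish on the image of $V$, and the energy functionals associated to any two g-natural metrics differ, along variations of $V$, by terms that vanish on Killing fields. The main subtlety to check is that this reduction really is valid for the full class of g-natural metrics defined in \cite[\S 5.1]{DP} (not just for metrics of Sasaki--Cheeger--Gromoll type), and this is precisely the content of the cited results, so nothing beyond a careful verification of hypotheses is required. The only real obstacle is notational: ensuring that the sign convention for $\widetilde{R}$ and the convention $\wnabla^*\wnabla = -\trace \wnabla^2$ used here agree with those in \cite{ACP,Str,HY}, so that the formulas transcribe without spurious sign changes.
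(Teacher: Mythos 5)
Your proposal is correct and coincides with the paper's treatment: the paper gives no independent proof of this theorem, but simply states that it follows by combining \cite[Theorem 2, Corollary 2 \& Proposition 3]{ACP}, \cite[Lemma 7.1 \& Proposition 8.1]{Str} and \cite[page 85]{HY} — exactly the references, matched to exactly the same clauses, that you identify.
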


Suppose that $M=G/\wK$ is a homogeneous space ($\wK$ being
a closed connected subgroup of $G$) equipped with a $G$-invariant metric $g_{\scriptscriptstyle{G/\wK}}$.

\begin{lem}
  \label{lem6.19}
  Assume that $V$ is a $G$-invariant vector field on $G/\Kt$. Then,
  $S(V)$ is $G$-invariant.
\end{lem}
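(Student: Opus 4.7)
The plan is to mimic the strategy used in the proof of Lemma~\ref{lem2.3}. The only ingredients needed are the $G$-invariance of the Levi-Civita connection~$\wnabla$ (equation~\eqref{eq2.1b}), the $G$-invariance of the curvature tensor $\widetilde{R}$ that it induces, and Remark~\ref{rema}, which says that left translating an orthonormal frame yields an orthonormal frame on the translated open set.

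First I would fix $g \in G$, an open set $\calO \subset \Mt$, and an orthonormal frame $(E_i)_{1 \le i \le n}$ on~$\calO$. By Remark~\ref{rema}, $(g_*E_i)_i$ is an orthonormal frame on $g\calO$, so on $g\calO$ one may compute
\[
S(V) = \sum_{i=1}^n \widetilde{R}(\wnabla_{g_*E_i} V, V)(g_*E_i).
\]
Next, using the hypothesis $V = g_*V$ together with~\eqref{eq2.1b}, one gets $\wnabla_{g_*E_i} V = \wnabla_{g_*E_i} g_*V = g_*\wnabla_{E_i} V$. The $G$-invariance of $\wnabla$ implies the $G$-invariance of the curvature tensor in the form $\widetilde{R}(g_*X, g_*Y)(g_*Z) = g_*\widetilde{R}(X,Y)Z$, so
\[
S(V) = \sum_{i=1}^n \widetilde{R}(g_* \wnabla_{E_i} V, g_*V)(g_* E_i)
= g_*\!\!\sum_{i=1}^n \widetilde{R}(\wnabla_{E_i} V, V) E_i = g_* S(V)
\]
on $g\calO$. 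Since $\calO$ and $g$ are arbitrary this gives $g_*S(V)=S(V)$ everywhere, as required.

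There is no genuine obstacle here: everything reduces to the invariance of $\wnabla$ (and hence of $\widetilde{R}$) together with the trick of computing the trace in the translated frame, exactly as in Lemma~\ref{lem2.3}~(2). The only point to be a little careful about is to write the trace with the frame $(g_*E_i)$ on $g\calO$ rather than with $(E_i)$ directly, so that the naturality $\widetilde{R}(g_*X,g_*Y)(g_*Z)=g_*\widetilde{R}(X,Y)Z$ can be applied term by term before factoring $g_*$ out of the sum.
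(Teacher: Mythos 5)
Your argument is correct and coincides with the paper's own proof: both compute $S(V)$ on $g\calO$ in the translated orthonormal frame $(g_*E_i)$, then use the invariance of $V$ and of $\wnabla$ (hence of $\widetilde{R}$) to factor out $g_*$. No further comment is needed.
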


\begin{proof}
  Let $g \in G$.  The invariance of $\nabla$ implies
  $g_*\nabla^2_{X,Y}Z= \nabla^2_{g_*X,g_*Y} g_*Z$, thus
  $g_* {R}(X,Y)Z$ is equal to  $g_*{R}(g_*X,g_*Y)g_*Z$.
  Let $(E_i)_i$ be an orthonormal frame on an open subset $\calO$
  of~$G/\Kt$.
    Then, on $\calO$, we get by invariance of $V$:
  \[
S(V) =
{R}(\nabla_{g_*E_i} V, V)g_*E_i = {R}(\nabla_{g_*E_i} g_*V,g_* V)g_*E_i
= g_*. {R}(\nabla_{E_i} V, V)E_i = g_* S(V).
\]
This shows that $S(V)$ is an invariant vector field. 
\end{proof}

Now, assume that $\pi : M= G/\Kt \sto N= G/K$ is a principal
$\bbS^1$-bundle over a generalised flag manifold $G/K$ as in the
previous sections.
Endow $G/\Kt$ with the \ACMS $\txeMK$
defined in Theorem~\ref{thm6.12}~(1). By construction, the Reeb vector
field $\xi$ is a $G$-invariant unit vector field on $G/\Kt$.
  On a contact metric manifold the harmonicity of the Reeb vector field
  $\xi$, and is consequences, has been studied in details in
  \cite[\S7]{ACP}.

\begin{thm}
  \label{thm6.20}
  The map $\xi : G/\Kt \to U(G/\Kt)$ is a harmonic map for any
  g-natural metric on $T(G/\Kt)$.
\end{thm}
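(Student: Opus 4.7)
The plan is to apply Theorem~\ref{thm6.18a} as a three-step reduction. Since $\xi$ is Killing for $\wkappa$ by Theorem~\ref{thm1.1}, part~(3) of that theorem allows us to replace the g-natural metric by the Sasaki metric: it suffices to show that $\xi : \Mt \to U\Mt$ is a harmonic map when $U\Mt$ is equipped with the Sasaki metric. By part~(2), this splits into showing that $\xi$ is a harmonic unit vector field and that the curvature term $S(\xi) = \sum_i \widetilde{R}(\wnabla_{E_i}\xi, \xi)E_i$ vanishes. The first condition is immediate from part~(1) combined with Theorem~\ref{prop6.16}, which yields $\wnabla^*\wnabla \xi = \tfrac{1}{2}\bigl(\sum_{\gamma \in R_T^+} n_\gamma\, \gamma(Y_0)^2/\kappa_\gamma^2\bigr)\xi$, a constant multiple of $\xi$.

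The remaining task is to prove $S(\xi)=0$. Since $\xi = X_0^\#$ is $G$-invariant, Lemma~\ref{lem6.19} ensures that $S(\xi)$ is $G$-invariant, so it suffices to verify $S(\xi)_\wpzero = 0$ at the base point. I would work on the open neighbourhood $\Nt$ with the orthonormal frame $\{\xi, F_\alpha, \Phi_\alpha\}$ of Lemma~\ref{lem6.15} and invoke the standard Killing identity $\wnabla^2_{X,Y}\xi = \widetilde{R}(\xi, X)Y$, equivalently $\widetilde{R}(X, \xi)Y = -\wnabla^2_{X,Y}\xi$. Since $\wnabla_\xi\xi = 0$ by Lemma~\ref{lem6.15}~(0), the contribution from $E_i = \xi$ drops out; using the values of $\wnabla_{F_\alpha}\xi$ and $\wnabla_{\Phi_\alpha}\xi$ from Lemma~\ref{lem6.15}~(2) and~(4), the remaining terms pair up as
\[
S(\xi)_\wpzero = \sum_{\gamma \in R_T^+}\ \sum_{\alpha \in \varpi^{-1}(\gamma)\cap \proots}\tfrac{a_\gamma}{2}\Bigl(\widetilde{R}(F_\alpha,\xi)\Phi_\alpha - \widetilde{R}(\Phi_\alpha,\xi)F_\alpha\Bigr)_\wpzero.
\]

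The crucial cancellation is that each of these two curvature terms vanishes at $\wpzero$. Indeed, using the Killing identity and Lemma~\ref{lem6.15}~(4) and~(5),
\[
\widetilde{R}(F_\alpha,\xi)\Phi_\alpha = -\wnabla_{F_\alpha}(\wnabla_{\Phi_\alpha}\xi) + \wnabla_{\wnabla_{F_\alpha}\Phi_\alpha}\xi = -\tfrac{a_\gamma}{2}\wnabla_{F_\alpha}F_\alpha + \tfrac{a_\gamma}{2}\wnabla_\xi\xi,
\]
which vanishes at $\wpzero$ by parts~(0) and~(7) of Lemma~\ref{lem6.15}; the computation of $\widetilde{R}(\Phi_\alpha,\xi)F_\alpha|_\wpzero$ is symmetric, using parts~(0) and~(8) instead. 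Hence $S(\xi)_\wpzero=0$, and $G$-invariance propagates the vanishing to all of $\Mt$.

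The only technical hurdle --- hardly an obstacle, since all the ingredients are in place --- is to exploit the Killing identity to convert $\widetilde{R}$ into a second covariant derivative of $\xi$: a direct computation of the Riemann tensor of $\Mt$ would be unwieldy, whereas Lemma~\ref{lem6.15} delivers the desired vanishings $\wnabla_\xi\xi = \wnabla_{F_\alpha}F_\alpha = \wnabla_{\Phi_\alpha}\Phi_\alpha = 0$ at the base point almost for free. Note that no assumption on $\wk$ being generic is needed, since the argument is entirely pointwise at $\wpzero$.
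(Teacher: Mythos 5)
Your proof is correct and follows essentially the same route as the paper: reduce via Theorem~\ref{thm6.18a} and the Killing property of $\xi$ to showing $S(\xi)=0$, localize by $G$-invariance (Lemma~\ref{lem6.19}), and compute term by term in the frame of Lemma~\ref{lem6.15}. The only (harmless) variation is that you expand $\widetilde{R}(F_\alpha,\xi)\Phi_\alpha$ through the Killing identity applied to $\xi$, whereas the paper expands $\widetilde{R}(\wnabla_{F_\alpha}\xi,\xi)F_\alpha$ directly from the definition of the curvature as an antisymmetrised Hessian; both give the same vanishing, and the sign you quote for the Killing identity (opposite to the one the paper uses in Section~\ref{sec4}) is immaterial since each Hessian term is shown to be zero.
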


\begin{proof}
  We have shown in Theorem~\ref{prop6.16} that
  $\nabla^*\nabla \xi \in \R \xi$, therefore, by
  Theorem~\ref{thm6.18}~(1), $\xi$ is a harmonic unit vector
  field. Since $\xi$ is Killing, by Theorem~\ref{thm6.18a}~(2) \&~(3), it
  suffices to prove that $S(\xi)=0$. Furthermore, by invariance
  of $S(\xi)$ it is enough to show that $S(\xi)$ vanishes on the
  open subset $\Ut$ used in Lemma~\ref{lem6.15} and we can apply
  the formulas of that lemma. We calculate $S(\xi)$ in the
  orthonormal frame $\{\xi,
  (F_\alpha,\Phi_\alpha)_\alpha\}$. 
  Since $\nabla_\xi\xi = 0$ the vector field $S(\xi)$ is sum of
  terms of the form
  $${R}(\nabla_{F_\alpha}\xi,\xi)F_\alpha
  + {R}(\nabla_{\Phi_\alpha}\xi,\xi)\Phi_\alpha.$$  From
  the definition we obtain
\begin{align*}
{R}(\nabla_{F_\alpha}\xi,\xi)F_\alpha &  =
\nabla^2_{\nabla_{F_\alpha} \xi,\xi}F_\alpha -
  \nabla^2_{\xi,\nabla_{F_\alpha}\xi} F_\alpha  \\ &=
  \nabla_{\nabla_{F_\alpha} \xi}(\nabla_\xi F_\alpha) -
  \nabla_{\nabla_{\nabla_{F_\alpha} \xi} \xi}F_\alpha
  -\bigl\{\nabla_\xi(\nabla_{\nabla_{F_\alpha}\xi}F_\alpha)  -
  \nabla_{\nabla_\xi(\nabla_{F\alpha}\xi)} F_\alpha \bigr\}.
\end{align*}
Recall (with the notation of  Lemma~\ref{lem6.15}) that
$$\nabla_\xi {F_\alpha} = (c_\gamma - \frac{a_\gamma}{2}) 
\Phi_\alpha, \nabla_{F_\alpha} \xi = - \frac{a_\gamma}{2} 
\Phi_\alpha, \nabla_\xi {\Phi_\alpha} = \frac{a_\gamma}{2} 
F_\alpha \mbox{ and } \nabla_\xi {\Phi_\alpha} = - (c_\gamma - \frac{a_\gamma}{2}) 
F_\alpha.$$ Therefore, $\nabla_{F_\alpha}F_\alpha =0$ 
implies that:
\begin{align*}
{R}(\nabla_{F_\alpha}\xi,\xi)F_\alpha & =  \frac{a_\gamma}{2} 
   \nabla_{\nabla_{\Phi_\alpha} \xi}F_\alpha 
 + \frac{a_\gamma}{2}\bigl\{ \nabla_\xi(\nabla_{\Phi_\alpha}F_\alpha) -
  \nabla_{\nabla_\xi \Phi_\alpha} F_\alpha \bigr\} \\
 & =  \frac{a_\gamma}{2}\bigl\{ \nabla_\xi(- \frac{a_\gamma}{2}\xi) -
  \nabla_{ - (c_\gamma - \frac{a_\gamma}{2}) 
F_\alpha} F_\alpha \bigr\} = 0.
\end{align*}
A similar calculation gives
$R(\nabla_{\Phi_\alpha}\xi,\xi)\Phi_\alpha=0$. Thus $S(\xi)
= 0$ and $\xi$ is a harmonic map.
\end{proof}

\begin{Example}
Let $G= \SU(m+1)$, $K = S(\Unit(1)\times \Unit(m))$,  $\wK = \SU(m)
\subset K$.
Then the generalised flag manifold
$N=G/K = \SU(m+1)/S(\Unit(1)\times \Unit(m))$ is an irreducible  Hermitian
symmetric space. 
It follows that any $K$-invariant scalar product on $\fm$ is
proportional to $B$
and that there exist two almost complex structures on $G/K$, which
are complex structures.  We fix one of them, denoted by $J$. 
Any $G$-invariant \kahler metric is \kahler-Einstein and up to a
positive scalar there exists a ``unique'' $G$-invariant
\kahler-Einstein metric on $G/K$, that we denote by $g_{\scriptscriptstyle{G/K}}$. The
manifold $(G/K,g_{\scriptscriptstyle{G/K}},J)$ then identifies with $\C P^m$ equipped
with its standard structure.  Hence, since
$M=G/\Kt = \SU(m+1)/\SU(m)$ is isomorphic to the sphere
$\mathbb{S}^{2m+1}$, the fibration $G/\Kt \sto G/K$ is the Hopf
fibration $\pi : \mathbb{S}^{2m+1} \sto \C P^m$
as described in \cite[9.81]{Bes}. If we equip $G/\Kt$ with the structure
$\txeMK$, as above,  
we have shown that $\xi : G/\Kt \to U(G/\Kt)$ is a harmonic map
for any g-natural metric on $T(G/\Kt)$.
\end{Example}

Finally, we  illustrate the previous results 
by an example in the case where $N=G/T$ is the full flag
manifold, i.e.~$K=T$ is a maximal torus. In this setting the
notation simplifies since we have:
\[
\g= \ft \boplus \fm, \ \; \fm = \fn = \boplus_{\alpha \in
   \proots} \fm_\alpha,  \  \;  \fz= \ft, \ \;  
 \rootsP = \emptyset, \ \; \rootsQ= \roots, \ \;  \vpi =
 \id_{\ft_\R}, \  \; R_T= \roots.
\]
Therefore the irreducible $K$-modules $\fm_\gamma$ are the
irreducible $T$-modules
$\fm_\alpha = \R X_\alpha \boplus \R Y_\alpha$. The choice of an
invariant ordering is the choice of a set of positive roots
$\proots$, i.e.~the choice of a Weyl chamber $\Csf$ in $\ft_\R$.  The
$G$-invariant complex structure $J$ on $G/T$ associated to
$\proots$ is defined  by $J_\fm X_\alpha = Y_\alpha$,
$J_\fm Y_\alpha = - X_\alpha$ for $\alpha \in \proots$. Let
$g_{\scriptscriptstyle{G/T}}$ be a $G$-invariant metric on $G/T$, then 
the Euclidean product $g_\fm$ is given by $g_\fm =
\kappa_\alpha B$ on $\fm_\alpha$,  $\alpha \in \proots$. The metric $g_{\scriptscriptstyle{G/T}}$ is
\kahler \sissi $\kappa_\alpha + \kappa_\beta = \kappa_{\alpha +
  \beta}$ if $\alpha + \beta \in \proots$; equivalently, the
\kahler form $\Omega_\fm$ is equal to the KKS form
$\omega_{ih_\kappa}$ with $h_\kappa \in \Csf$.
Choose a sub-torus $\wT$ of $T$ such that
$T/\wT \cong \bbS^1$, let $X_0 =iY_0\in \ft$ (with
$Y_0 \in \ft_\R$) be such that $B(X_0,X_0) =1$ and
$\ft = \R X_0 \obot \wt$. Then
$\pi : G/\wT \sto  G/T$ is a principal $\bbS^1$-bundle.
Recall that a \NACMS $\txeMT$ on $G/\wT$ is constructed from: a
Hermitian metric $(g_{\scriptscriptstyle{G/T}},J)$, the connection $\eta$ such
that $\eta_\wpzero(U) = B(X_0,U)$, the metric
$g_{\scriptscriptstyle{G/\wT}} = \pi ^* g_{\scriptscriptstyle{G/T}} + \eta \otimes \eta$ and the
endomorphism $\theta$ lifting $J$ to the horizontal bundle.

We give an example of this construction in the case:
 \[
 G = \SU(3) \subset G_\C= \SL(3,\C), \quad \g= \su(3) \subset \g_\C=\fsl(3,\C).
 \]
 The split real form of $\g_\C$ is $\g_\R = \fsl(3,\R)$ and we
 choose the $G_\C$-invariant bilinear form
 $\ascal{A}{B} = \trace(AB)$ on $\g_\C$.  The split Cartan
 subalgebra $\ft_\R$ consists of the traceless diagonal real
 matrices, which will be written $\diag[x_1,x_2,x_3]$ with
 $x_1+x_2+x_3= 0$.  The compact Cartan subalgebra is
\[
\ft = i\ft_\R = \{i\diag[a,b,c] \, : \, \diag[a,b,c] \in \ft_\R\}.
\]
For $i=1,2,3$, let $\vepsilon_i \in \ft_\C^*$ be the linear form
which maps $\diag[x_1,x_2,x_3]$ to $x_i$. Set $\alpha_j=
\vepsilon_{j} - \vepsilon_{j+1}$ for $j=1,2$ and $\alpha_3 =
\alpha_1 + \alpha_2= \vepsilon_1 - \vepsilon_3$. Then 
one has 
$ \roots = \{\pm \alpha_1, \pm \alpha_2, \pm
  \alpha_3\} = \{\vepsilon_i - \vepsilon_j, \ 1 \le  i \neq j \le 3\}$.
  For
$\lambda = \vepsilon_k - \vepsilon_\ell$ the element $E_\lambda$
is the unit matrix $E_{k\ell}$.  Hence,
\begin{equation*} \label{eq5.0}
  X_\lambda = \frac{1}{\sqrt{2}} (E_{k\ell} - E_{\ell k}) , \quad
  Y_\lambda = \frac{i}{\sqrt{2}}(E_{k\ell} + E_{\ell k}).
\end{equation*}
The algebra $\g$ decomposes as the orthogonal direct sum $\g= \ft \obot \fm$. One has
$\fm= \fm_{\pm \alpha_1} \boplus \fm_{\pm \alpha_2} \boplus
\fm_{\pm \alpha_3}$, where
$\fm_{\pm \alpha_i} = \R X_{\pm \alpha_i} \boplus  \R Y_{\pm
  \alpha_i}$.
Any $T$-invariant  scalar product $g_\fm $ on $\fm$ is
defined by ${(g_\fm)}_{\mid \fm_{\alpha_j}} = \kappa_{\alpha_j} B$
with  $\kappa_{\alpha_j} >0$.
Finally, recall that a $T$-invariant complex structure $J_\fm$ on $\fm$
is determined by $J_\fm X_{\alpha_j} = \epsilon(\alpha_j) Y_{\alpha_j}$,
$J_\fm  Y_{\alpha_j} = - \epsilon(\alpha_j) X_{\alpha_j}$ with
$\epsilon(\alpha_j) =\pm 1$ for $j=1,2,3$.

We fix two integers $k,\ell \in \Z$ such that $(k,\ell) \neq (0,0)$ and set  $\Gamma = \sqrt{k^2
  + \ell^2 + k\ell}$.
Define an  element of~$\ft$ by $X_{-1} =
\frac{i}{\Gamma\sqrt{2}} \diag[k,\ell, -(k +\ell)]$
and set
\[
\wt= \ft_{k,\ell} = \R X_{-1}, \quad \wT = T_{k,\ell} =
  \exp(\wt) \subset T.
\]
Therefore, $\wT$ is the one-dimensional sub-torus of $T$ such that
\[
\wT= T_{k,\ell} = \{\diag[e^{itk}, e^{it\ell}, e^{-it(k+\ell)}] \, : \, t \in \R\}.
\]
Let $X_0 \in \ft $  such that $B(X_{-1},X_0) = 0$, $B(X_0,X_0) = 1$:
\begin{equation*}
  \label{eq5.4}
  X_0=
\frac{-i}{\Gamma\sqrt{6}} \diag[2\ell +k, -(\ell + 2k) , -\ell
+k] = iY_0, \quad  Y_0= 
\frac{-1}{\Gamma\sqrt{6}} \diag[2\ell +k, -(\ell + 2k) , -\ell
+k].
\end{equation*}
Notice that $(X_{-1},X_0)$ is an orthonormal basis, for $B$, of
$\ft = \ftkl \boplus \R X_0= \R X_{-1} \boplus \R X_0$.
The group  $\bbS^1= T/\wT$ is a one-dimensional torus
such that $\Lie(\bbS^1) = \fs = \ft/\wt =\R A$, with $A = [X_0 + \wt]$.
One can therefore define a principal $\bbS^1$-bundle
\[
\pi :  M=G/\Tkl \sto N =G/T.
\]
We say that this homogeneous  bundle is  an \emph{Aloff-Wallach 
  space} \cite{AW}. 
Recall that we set
$\wm= \R X_0 \boplus \fm = \R X_0 \boplus \fm_{\alpha_1} \boplus
\fm_{\alpha_2} \boplus \fm_{\alpha_3}$,
and that $\wm$ is a $\wT$-module under the adjoint action.
One easily sees, by computing
$\lambda(X_{-1})$ for $\lambda \in \roots$, that:
\begin{equation*} \label{eq5.2}
i\wt =\Ker(\alpha_1) \iff k =\ell, \  i\wt =\Ker(\alpha_2) \iff
2\ell +k=0, \  i\wt =\Ker(\alpha_3) \iff 2k +\ell =0.
\end{equation*}
Then, $\wt$ is generic if and only if $k \neq \ell$, $2\ell +
 k \neq 0$, $2k+\ell \neq 0$.  Proposition~\ref{prop6.9}
 becomes in this example: 

\begin{prop}
  \label{prop5.3}
   The space of $G$-invariant vector fields
   $\calX(G/T_{k,\ell})^G$ is equal to:
   \begin{enumerate}[{\rm \indent (1)}]
     \item $\R \xi$ when $k \neq \ell$, $2\ell +
 k \neq 0$, $2k+\ell \neq 0$; 
\item  $\R \xi \boplus \R
 X_{\alpha_1}^\# \boplus Y_{\alpha_1}^\#$ if $k=\ell$;
 \item  $\R \xi \boplus \R
 X_{\alpha_2}^\# \boplus Y_{\alpha_2}^\#$ if $2\ell +k =0$;
   \item $\R \xi \boplus \R
  X_{\alpha_3}^\# \boplus Y_{\alpha_3}^\#$ if $2k +\ell=0$.
  \end{enumerate}
\end{prop}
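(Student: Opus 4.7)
The plan is to reduce directly to Proposition~\ref{prop6.9}, whose hypothesis simplifies substantially in the full flag setting. Since $K=T$ we have $\fz=\ft$, the restriction map $\vpi:\ft^*_\R\to\fz^*_\R$ is the identity, and $R_T=\rootsQ=\roots$. Consequently, for each root $\alpha$ the module $\fm_\alpha^\C=\C E_\alpha$ is automatically one-dimensional, so the exceptional clause of Proposition~\ref{prop6.9} is triggered precisely when $\wt=i\Ker(\alpha)$ for some $\alpha\in\roots$.

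The next step is to translate this condition into equations on $(k,\ell)$. Since $\wt=\R X_{-1}$, one has $\wt=i\Ker(\alpha)$ if and only if $\alpha(Y_{-1})=0$, where $Y_{-1}=-iX_{-1}=\frac{1}{\Gamma\sqrt{2}}\diag[k,\ell,-(k+\ell)]$. Evaluating each positive root on $Y_{-1}$ yields
\[
\alpha_1(Y_{-1})=\tfrac{1}{\Gamma\sqrt{2}}(k-\ell),\quad
\alpha_2(Y_{-1})=\tfrac{1}{\Gamma\sqrt{2}}(k+2\ell),\quad
\alpha_3(Y_{-1})=\tfrac{1}{\Gamma\sqrt{2}}(2k+\ell),
\]
which identifies exactly the three degenerate cases listed in the statement (and recovers the assertions made just before the proposition).

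Finally, I would conclude by case analysis. Under the standing assumption $(k,\ell)\neq(0,0)$, at most one of the three vanishing conditions can hold simultaneously: for instance $k=\ell$ together with $k+2\ell=0$ forces $3\ell=0$ and hence $k=\ell=0$, and the two other pairings are similar. When none of the three linear conditions is satisfied, $\wt$ is generic in the sense of Definition~\ref{rem6.9} and Proposition~\ref{prop6.9} gives $\calX(\Mt_{k,\ell})^G=\R X_0^\#=\R\xi$, which is case~(1). When exactly one of the conditions holds, say $\alpha_j(Y_{-1})=0$, Proposition~\ref{prop6.9} supplies the additional summand $\R X_{\alpha_j}^\#\boplus\R Y_{\alpha_j}^\#$, yielding cases~(2), (3), or~(4) respectively.

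There is no real obstacle here: the content of the proposition is the translation of the abstract criterion of Proposition~\ref{prop6.9} into explicit arithmetic conditions on the integers $(k,\ell)$, and every ingredient needed (the identification of $\fm_\alpha^\C$ as one-dimensional, and the evaluations of $\alpha_j$ on $Y_{-1}$) is already available in the text.
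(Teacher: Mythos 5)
Your proof is correct and follows essentially the same route as the paper, which simply specialises Proposition~\ref{prop6.9} to the full flag case (where $\vpi=\id$ makes every $\fm_\alpha^\C=\C E_\alpha$ one-dimensional) by computing $\alpha_j(Y_{-1})$ for $j=1,2,3$. Your evaluations agree with the statement of Proposition~\ref{prop5.3}, and in fact silently correct a typo in the displayed equivalences just before it, where the paper writes $i\wt=\Ker(\alpha_3)\iff k+2\ell=0$ instead of $2k+\ell=0$.
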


To give an example, we now choose the complex structure $J$ on $G/T$ associated to the
basis $\broots=\{\alpha = \alpha_3, \, \beta= -\alpha_2\}$ of
$\roots$, hence
$\proots=\{\alpha = \alpha_3, \, \beta= -\alpha_2, \, \gamma =
  \alpha+\beta= \alpha_1\}$
and the corresponding Weyl chamber is $\Csf =\{\diag[a,b,c] \in
\ft_\R : a > c > b\}$. 
Then, we have 
\[
  (X_\alpha = X_{\alpha_3},  \,Y_\alpha =Y_{\alpha_3}), \ (X_\beta = -X_{\alpha_2}, \,
  Y_\beta =Y_{\alpha_2}), \ (X_\gamma = X_{\alpha_1}, \, Y_\gamma
  = Y_{\alpha_1}),
\]
and $J_\fm X_\lambda= Y_\lambda$, $J_\fm Y_\lambda= -X_{\lambda}$
for $\lambda =\alpha,\beta,\gamma$. 
Moreover, under the choice of $Y_0$ made above:
\begin{equation*}
  \label{eq5.9}
  \alpha(Y_0) = -\frac{\sqrt{3}}{\Gamma\sqrt{2}}  \ell, \quad
  \beta(Y_0) = -\frac{\sqrt{3}}{\Gamma\sqrt{2}} k, \quad \gamma(Y_0)=
  -\frac{\sqrt{3}}{\Gamma\sqrt{2}} (\ell +k).
\end{equation*}
Thus, $-Y_0 \in \Csf$ \sissi $k >0$ and $\ell >0$.
Notice that $\fm_\alpha= \fm_{\alpha_3}$,
$\fm_\beta = \fm_{\alpha_2}$ and $\fm_\gamma =
\fm_{\alpha_1}$. Let $(g_{\scriptscriptstyle{G/T}},J)$ be a Hermitian metric on $G/T$
determined by a $T$-invariant Euclidean product $g_\fm$ as
above, i.e.~by a triple
$(\kappa_\alpha,\kappa_\beta,\kappa_\gamma)$ of positive
scalars. Then, $(g_{\scriptscriptstyle{G/T}},J)$ is \kahler \sissi
$\kappa_\alpha + \kappa_\beta = \kappa_\gamma$; equivalently, for all $U,V \in \fm$,
$g_\fm(U,V) = \omega_{i h_\kappa}(U, J_\fm V)$
with $h_\kappa \in \Csf$ and in this case
$\kappa_\alpha= \alpha(h_\kappa), \kappa_\beta = \beta(h_\kappa)$
(thus $\kappa_\gamma= \gamma(h_\kappa)$).

Recall that from $J,X_0,g_{\scriptscriptstyle{G/T}}$ we can construct an \ACMS
$\sigma = \txeMT$ on
$G/\wT=G/T_{k,\ell}$.  Theorem~\ref{thm6.17} yields:

\begin{thm}
  \label{thm4.1a}
  Let $J$ be as above and choose a \kahler metric $(g_{\scriptscriptstyle{G/T}},J)$ on
  $G/T$. Then, the \NACMS structure $\sigma$ on $G/T_{k,\ell}$ is  a harmonic map.
\end{thm}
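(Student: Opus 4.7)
The plan is to invoke Theorem~\ref{thm6.17} directly, after verifying that the Aloff-Wallach construction fits that general framework. First, I would observe that $M = G/T$ with $G = \SU(3)$ is a (full) generalised flag manifold in the sense of Section~\ref{sec6}, with $K = T$ being its own centraliser torus. The sub-torus $\wT = T_{k,\ell}$ has codimension one in $T$, and the unit vector $X_0 \in \ft$ satisfies $B(X_0, X_0) = 1$ and $\ft = \R X_0 \perp \wt$ by construction. Thus the principal $S$-bundle $\pi : \Mt_{k,\ell} = G/\wT \sto M = G/T$ is exactly of the type considered in Section~\ref{sec7}.

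Next, I would note that the $G$-invariant almost complex structure $J$ on $M$ under consideration is genuinely a complex structure: it is the one associated (via Theorem~\ref{thm6.8}~(1)) to the invariant ordering $\proots = \{\alpha, \beta, \gamma\}$ and the corresponding Weyl chamber $\Csf = \{\diag[a,b,c] \in \ft_\R : a > c > b\}$. Hence Theorem~\ref{thm6.12}~(2) applies: the quadruple $\sigma = \txek$ built from $J$, the connection $\eta$ with $\eta_\wpzero(U) = B(X_0, U)$, the metric $\wkappa = \pi^*\kappa + \eta \otimes \eta$, and the endomorphism $\theta$ lifting $J$ as in~\eqref{eq4.4} is a well-defined $G$-invariant \NACMS on $\Mt_{k,\ell}$.

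Finally, since $(\kappa, J)$ is \kahler by hypothesis, Theorem~\ref{thm6.17} applies verbatim and yields that $\sigma$ is a harmonic map. Unpacking this slightly, the proof rests on two ingredients already established: Theorem~\ref{prop6.16} shows that $\delta \theta \in \R \xi$ and $\wnabla^*\wnabla \xi \in \R \xi$ for the canonical construction on any such homogeneous principal bundle (the explicit coefficients involve $\gamma(Y_0)$ and the scalars $\kappa_\gamma$), and then Corollary~\ref{cor3.9}~(3) combined with the fact that $J$ is automatically a harmonic map in the \kahler case concludes the argument.

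There is essentially no obstacle in this proof: the hard work has already been done in establishing the general harmonicity criterion of Theorem~\ref{thm6.17}, and Theorem~\ref{thm4.1a} is a direct specialisation. The only thing to check is that the $\SU(3)$ setup with the distinguished element $X_0$ and sub-torus $T_{k,\ell}$ is an instance of the abstract framework, which is immediate from the data recorded above. The role of this theorem is thus illustrative: it produces an explicit infinite family of harmonic normal almost contact metric structures, parametrised by the pairs $(k,\ell) \ne (0,0)$ defining the Aloff-Wallach spaces, together with any choice of \kahler metric $(\kappa, J)$ on the base $\SU(3)/T$.
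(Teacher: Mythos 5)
Your proposal is correct and follows exactly the route the paper takes: the paper derives Theorem~\ref{thm4.1a} as an immediate specialisation of Theorem~\ref{thm6.17} (itself resting on Theorem~\ref{prop6.16} and Corollary~\ref{cor3.9}), after having set up the Aloff--Wallach bundle $\pi : G/T_{k,\ell} \sto G/T$ as an instance of the general homogeneous $S^1$-bundle framework of Section~\ref{sec7}. Your verification that the $\SU(3)$ data ($X_0$, the codimension-one sub-torus $T_{k,\ell}$, the complex structure $J$ from the chosen Weyl chamber) fit that framework is precisely the content the paper records in the paragraphs preceding the theorem.
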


  By Theorem~\ref{thm6.12}, $\sigma$ is c-Sasakian \sissi there
  exists $c >0$ such that $- Y_0 = c h_\kappa$,
  i.e.~$c \kappa_\alpha= \frac{\sqrt{3}}{\Gamma \sqrt{2}} \ell$
  and $c\kappa_\beta= \frac{\sqrt{3}}{\Gamma \sqrt{2}}
  k$. Therefore, it suffices to pick $h_\kappa \in \Csf$ such
  that $h_\kappa \notin -\R_+^* Y_0$ to get that $\sigma$ is not
  c-Sasakian.  In particular, if $k$ or $\ell$ is $<0$, the
  structure $\sigma$ is not c-Sasakian. Moreover, the metric
  $g_{\scriptscriptstyle{G/T}}$ is \kahler-Einstein \sissi $h_{\kappa}$ is a multiple
  of $h_\rho = h_\gamma = \diag[1,-1,0]$. Therefore, for this
  choice of $g_{\scriptscriptstyle{G/T}}$, the structure $\sigma$ is c-Sasakian when
  $-Y_0 = c\diag[1,-1,0]$, which is equivalent to $k =\ell$ and
  $\frac{\sqrt{3}}{\Gamma\sqrt{2}} \ell = c >0$. Thus, if
  $k \neq \ell$ or $k =\ell < 0$ the structure $\sigma$ is not
  Sasakian with $g_{\scriptscriptstyle{G/T}}$ \kahler-Einstein.

For completeness, we give the expression of the vector fields
$\delta \theta$, $\nabla^* \nabla \xi$ obtained in
Theorem~\ref{prop6.16}:
\[
  \delta \theta= -\frac{\sqrt{3}}{\Gamma\sqrt{2}}
  \Bigl(\frac{\ell}{\kappa_\alpha} + \frac{k}{\kappa_\beta} +
  \frac{(\ell +k)}{\kappa_\gamma}\Bigr)\xi, \quad
  \nabla^*\nabla \xi = \frac{3}{4\Gamma^2}
  \Bigl(\frac{\ell^2}{\kappa_\alpha^2} +
  \frac{k^2}{\kappa_\beta^2} + \frac{(\ell
    +k)^2}{\kappa_\gamma^2}\Bigr)\xi.
\]


\vfill

\end{document}